\titleformat{\paragraph}[runin]
{\normalfont\normalsize\bfseries}{\theparagraph}{1em}{}
\titleformat{\subparagraph}[runin]
{\normalfont\normalsize\bfseries}{\thesubparagraph}{1em}{}
\titlespacing*{\chapter} {0pt}{50pt}{40pt}
\titlespacing*{\section} {0pt}{3.5ex plus 1ex minus .2ex}{2.3ex plus .2ex}
\titlespacing*{\subsection} {0pt}{3.25ex plus 1ex minus .2ex}{1.5ex plus .2ex}
\titlespacing*{\subsubsection}{0pt}{3.25ex plus 1ex minus .2ex}{1.5ex plus .2ex}
\titlespacing*{\paragraph} {0pt}{3.25ex plus 1ex minus .2ex}{1em}
\titlespacing*{\subparagraph} {\parindent}{3.25ex plus 1ex minus .2ex}{1em}
\newtheorem{theorem}{Theorem}[section]
\newtheorem{lemma}[theorem]{Lemma}
\newtheorem{proposition}[theorem]{Proposition}
\newtheorem{corollary}[theorem]{Corollary}
\theoremstyle{definition}
\newtheorem{notations}[theorem]{Notations}
\theoremstyle{remark}
\newtheorem{remark}[theorem]{Remark}
\DeclareMathOperator{\ide}{id}
\DeclareMathOperator{\J}{J}
\DeclareMathOperator{\ima}{Im}
\DeclareMathOperator{\BC}{BC}
\DeclareMathOperator{\BN}{BN}
\DeclareMathOperator{\BP}{BP}
\DeclareMathOperator{\BW}{BW}
\DeclareMathOperator{\Ho}{H}
\DeclareMathOperator{\HH}{HH}
\DeclareMathOperator{\HC}{HC}
\DeclareMathOperator{\HP}{HP}
\DeclareMathOperator{\HN}{HN}
\DeclareMathOperator{\Tot}{Tot}
\newcommand{\ov}{\overline}
\newcommand{\ot}{\otimes}
\newcommand{\wt}{\widetilde}
\newcommand{\ep}{\epsilon}
\newcommand{\de}{\delta}
\newcommand{\bx}{\mathbf x}
\newcommand{\bc}{\mathbf c}
\newcommand{\byy}{\mathbf y}
\newcommand{\bz}{\mathbf z}
\newcommand{\hs}{\hspace{-0.8pt}}
\newcommand{\xcirc}{\hs\circ\hs}
\newcommand{\sR}{\scriptstyle R \textstyle}
\newcommand{\sdR}{\scriptstyle R \displaystyle}
\newcommand{\sw}{\mathrm{sw}}
\DeclareMathAlphabet{\mathpzc}{OT1}{pzc}{m}{it}
\begin{document}

\title{Cyclic homology of cleft extensions of algebras}

\author{Jorge A. Guccione}
\address{Departamento de Matem\'atica\\ Facultad de Ciencias Exactas y Naturales-UBA, Pabell\'on~1-Ciudad Universitaria\\ Intendente Guiraldes 2160 (C1428EGA) Buenos Aires, Argentina.}
\address{Instituto de Investigaciones Matem\'aticas ``Luis A. Santal\'o"\\ Facultad de Ciencias Exactas y Natu\-ra\-les-UBA, Pabell\'on~1-Ciudad Universitaria\\ Intendente Guiraldes 2160 (C1428EGA) Buenos Aires, Argentina.}
\email{vander@dm.uba.ar}

\author{Juan J. Guccione}
\address{Departamento de Matem\'atica\\ Facultad de Ciencias Exactas y Naturales-UBA\\ Pabell\'on~1-Ciudad Universitaria\\ Intendente Guiraldes 2160 (C1428EGA) Buenos Aires, Argentina.}
\address{Instituto Argentino de Matem\'atica-CONICET\\ Savedra 15 3er piso\\ (C1083ACA) Buenos Aires, Argentina.}
\email{jjgucci@dm.uba.ar}

\thanks{Jorge A. Guccione and Juan J. Guccione were supported by UBACyT 20020110100048 (UBA) and PIP 11220110100800CO (CONICET)}

\author[C. Valqui]{Christian Valqui}
\address{Pontificia Universidad Cat\'olica del Per\'u - Instituto de Matem\'atica y Ciencias Afi\-nes, Secci\'on Matem\'aticas, PUCP, Av. Universitaria 1801, San Miguel, Lima 32, Per\'u.}
\email{cvalqui@pucp.edu.pe}

\thanks{Christian Valqui was supported by PUCP-DGI-2013-3036.}

\subjclass[2010]{Primary 16E40; Secondary 16S70}
\keywords{}

\begin{abstract} Let $k$ be a commutative algebra with $\mathds{Q}\subseteq k$ and let $(E,p,i)$ be a cleft extension of $A$. We obtain a new mixed complex, simpler than the canonical one, giving the Hochschild and cyclic homologies of $E$ relative to $\ker(p)$. This complex resembles the canonical reduced mixed complex of an augmented algebra. We begin the study of our complex showing that it has a harmonic decomposition like to the one considered by Cuntz and Quillen for the normalized mixed complex of an algebra.
\end{abstract}

\maketitle

\section*{Introduction}
Let $k$ be a commutative ring such that $\mathds{Q}\subseteq k$, and let $A$ be an associative unital $k$-algebra. A {\em cleft extension} of $A$ is a triple $(E,p,i)$, consisting of a $k$-algebra~$E$ and $k$-algebra morphisms $i\colon A\to E$ and $p\colon E\to A$, such that $p \circ i=\ide_A$. When applications $i$ and $p$ are evident we simply say that $E$ is a cleft extension of $A$ instead of saying that $(E,p,i)$ it is. A {\em morphism} $f\colon (E,p,i) \longrightarrow (F,q,j)$, of a cleft extension of an algebra $A$ to a cleft extension of an algebra $B$, is a morphisms of algebras $f\colon E\to F$ such that there exists a (necessarily unique) morphism $\varphi\colon A\to B$ satisfying $f\circ i=j\circ \varphi$ and $q\circ f=\varphi \circ p$.

For example, given a graded algebra $E = \bigoplus_{i\ge 0} A_i$, each subalgebra of $E$ including $A_0$ is a cleft extension of $A_0$, and the same is true for each quotient $E/I$, of $E$  by a two sided ideal $I\subseteq A_+:= \bigoplus_{i>0} A_i$. This includes a lot of well known examples of constructive nature, such as tensor algebras, truncated tensor algebras, truncated quiver algebras,  symmetric algebras, exterior algebras, general quadratic algebras, monomial algebras, Rees algebras, etcetera. Let $R$ be a commutative ring such that $k\subseteq R$. As was point out in \cite{B}, also there are examples appearing for structural reasons. For instance:

\begin{enumerate}

\smallskip

\item An $R$-algebra $E$ is a cleft extension of $A:=E/\J(E)$, where $\J(E)$ denotes the Jacobson radical of $E$, when:

\begin{enumerate}

\smallskip

\item[(i)] $E$ is a basic semiperfect algebra or $A$ is $R$-projective, $\J(E)$ is nilpotent and the Hochschild dimension of $A$ is lesser or equal than $1$ (see \cite{Pierce} and \cite{Rotman}),

\smallskip

\item[(ii)] $R$ is a field and $\dim_R(E) <\infty$ (see \cite{Eilenberg} and \cite{MacLane}).

\smallskip

\end{enumerate}

\item If $R$ is a field, then, by the Levi-Malcev Theorem (see \cite{Abe}), the universal enveloping algebra $U(L)$, of a finite-dimensional Lie algebra $L$, is a cleft extension of $U(L/J)$, where $J$ is the radical of $L$.

\smallskip

\item If $A$ is quasi-free, then any nilpotent extension of $A$ is cleft (see \cite{C-Q1}).

\smallskip

\end{enumerate}
A special interesting type of cleft extensions are the split-by-nilpotent extension algebras (see \cite{Assem}, \cite{Kha} and \cite{LMZ}).

\smallskip

There is a canonical way to construct a cleft extension of an algebra $A$. Given a an $A$-bimodule $M$ endowed with an associative $A$-bimodule map
$$
\begin{tikzpicture}
\draw [->] (0.5,0)node[left=0pt]{$M\ot_A M$}  -- (2,0) node[right=0pt]{$M,$};
\draw[|->] (0.5,-0.4) node[left=0pt]{$m\ot m'$} -- (1.7,-0.4) node[right=0pt]{$m \smalltriangledown m'$};
\end{tikzpicture}
$$
the triple $(A\ltimes_{\!\smalltriangledown} M, i, p)$, where $A\ltimes_{\!\smalltriangledown} M$ is the algebra with underlying vector space $A\times M$ and multiplication map
\[
(a,m)(a',m') := (aa',am' + ma' + m \smalltriangledown m'),
\]
$i\colon A\longrightarrow A\ltimes_{\!\smalltriangledown} M$ is the map defined by $i(a):=(a,0)$ and $p \colon A\ltimes_{\!\smalltriangledown} M \longrightarrow A$ is the map defined by $p(a,m):=a$, is a cleft extension of $A$, that is called {\em the~$\smalltriangledown$-extension} of $A$ or the {\em cleft extension of $A$ associated with~$\smalltriangledown$}. Moreover it is not difficult to see that each clef extension is isomorphic to one of this type.

\smallskip

Let $k$ be a commutative ring and let $C$ be a $k$-algebra. If $K$ is a subalgebra of $C$, we will say that $C$ is a $K$-algebra. In this paper we begin the study of the Hochschild, cyclic, negative and periodic homologies of cleft extensions $(E,p,i)$ of a $K$-algebra~$A$. It is easy to see that these homologies are the direct sum of the corresponding homologies of the $K$-algebra $A$ and the corresponding homologies of the $K$-algebra $E$, relative to $\ker(p)$. So we restrict our attention to the last ones. Moreover, by the previous discussion, we can assume that $E$ is a cleft extension $A\ltimes_{\!\smalltriangledown} M$ associated with an associative $A$-bimodule map $\smalltriangledown$. Our main result is Theorem~\ref{th3.2}, in which we obtain a double mixed complex $(\hat{X},\hat{b},\hat{d},\hat{B})$, given these homologies, whose associated mixed complex $(\breve{X},\breve{b},\breve{B})$ is simpler than the canonical mixed complex of $A\ltimes_{\!\smalltriangledown} M$ relative to $M$. We also introduce an begin the study of an harmonic decomposition of this complex. In a forthcoming paper we are going to obtain an still simpler complex under the hypothesis that $M$ is isomorphic to $A\ot_k V$ as a left $A$-module. We hope that this allows us to get explicit computations.

\smallskip

The paper is organized in the following way:

\smallskip

In Section~\ref{preliminares} we recall some well known definitions and results. Among them, the perturbation lemma, which we will use again and again in the rest of the paper, and the definition of double mixed complex, which we got from \cite{Co}.

\smallskip

Section~\ref{relative} is devoted to establishing the main results in this paper. In fact, $(\hat{X},\hat{b},\hat{d},\hat{B})$ can be thought as a double mixed complex associated to the $3$-tuple $(A,M,\smalltriangledown)$, and this association is functorial in an evident sense. For $0\le 2w\le v$, let $X_{vw}$ be the direct sum of all the tensor products $X_0\ot \cdots \ot X_n$ such that $X_0=M$, $X_i=M$ for $w$ indices $i>0$ and $X_i=\ov{A}$ for the other ones, where $n=v+w$ and $\ov{A} = A/k$. Let $b\colon X_{vw}\longrightarrow X_{v-1,w}$ be the map given by the same formula as the Hochschild boundary map of an algebra, where the meaning of the concatenation $x_ix_{i+1}$ of two consecutive factors in a simple tensor is the one given in item~(3) of Notation~\ref{not1.2}. Let $t\colon X_{vw}\longrightarrow X_{vw}$ be
the map defined by
$$
t(x_0\ot\cdots\ot x_n) = (-1)^{in}x_i\ot \cdots \ot x_n\ot x_0\ot\cdots\ot x_{i-1},
$$
where $i$ denotes the last index such that $x_i\in M$ and let $N = \ide + t + t^2 + \cdots + t^w$. The double mixed complex $(\hat{X},\hat{b},\hat{d},\hat{B})$ has objects $\hat{X}_{vw} = X_{vw}\oplus X_{v-1,w}$. The boundary maps are given by
\[
\hat{b}(\bx,\byy) = \bigl(b(\bx) + (\ide-t) (\byy),-b(\byy)\bigr)\qquad\text{and}\qquad \hat{d}(\bx,\byy) = \bigl(d(\bx),-d'(\byy)\bigr),
\]
where $d,d'\colon X_{vw}\longrightarrow X_{v,w-1}$ are maps depending on the map $\smalltriangledown$, and the Connes operator is given by $\hat{B}(\bx,\byy) = (0,N(\bx))$. So, it resembles the reduced mixed complex of an augmented algebra. Since the maps $t$ and $N$ satisfy
\begin{equation}
\ima(1-t) = \ker(N)\qquad\text{and}\qquad \ima(N)=\ker(1-t),\label{eq1}
\end{equation}
the cyclic homology of $E$ relative to $M$ is the homology of the quotient complex of $(X,b,d)$ by the image of $\ide-t$. Indeed, this also follows from the fact that $(\hat{X},\hat{b},\hat{d},\hat{B})$ satisfies the Connes property (\cite{C-Q2}), which is another consequence of the equalities~\eqref{eq1}.
\smallskip

Our purpose in Section~\ref{The harmonic decomposition} is to show that $(\breve{X},\breve{b},\breve{B})$ has a harmonic decomposition like the one studied in \cite{C-Q2}. In order to carry out this task we need to define a de Rham coboundary map and a Karoubi operator on $(\breve{X},\breve{b})$. Actually it will be convenient for us to work with a new double mixed complex, namely $(\ddot{X},\ddot{d},\ddot{b},\ddot{B})$, whose associated mixed complex is also $(\breve{X},\breve{b},\breve{B})$. As in \cite{C-Q2} the Karoubi operator $\ddot{\kappa}$ of $(\ddot{X},\ddot{d},\ddot{b})$ commutes with $\ddot{b}$ and $\ddot{d}$ and satisfies a polynomial equation $P_w(\kappa)$ on each $\ddot{X}_{vw}$. Thus we have the harmonic decomposition $\ddot{X} = P(\ddot{X})\oplus P^{\perp}(\ddot{X})$, where $P$ is the spectral projection onto the generalized nullspace for $\ide - \ddot{\kappa}$ and $P^{\perp} = 1-P$. The first component of this decomposition is $\ddot{B}$-acyclic and the second one is $\ddot{d}$-acyclic and killed by $\ddot{B}$. Hence $(\ddot{X},\ddot{d},\ddot{b})$ has the Connes property. We finish the section by giving an explicit description of $P(\ddot{X})$.

\smallskip

Next we introduce some notations that we will use throughout this paper.

\begin{notations}\label{not1.1} Let $V$ be a $K$-bimodule and let $M$ be a $C$-bimodule.

\begin{enumerate}

\smallskip

\item We set $\ov{C}:= C/K$. Moreover, given $c\in C$, we also denote by $c$ the class of $c$ in $\ov{C}$.

\smallskip

\item We use the unadorned tensor symbol $\ot$ to denote the tensor product $\ot_{\!K}$.

\smallskip

\item We let $V^{\ot^n}$ denote the $n$-fold power tensor of $V$.

\smallskip

\item Given $c_0,\dots, c_r \in C$ and $i<j$, we write $\mathbf{c}_i^j:= c_i\ot\cdots\ot c_j$.

\smallskip

\item Given a $K$-bimodule $M$, we let $M\ot$ denote the quotient $M/[M,K]$, where $[M,K]$ is the $k$-vector subspace of $M$ generated by all the commutators $m\lambda - \lambda m$, with $m\in M$ and $\lambda\in K$. Moreover, for $m\in M$, we let $[m]$ denote the class of $m$ in $M\ot$.

\end{enumerate}
\end{notations}

\begin{notations}\label{not1.2} Let $E := A\ltimes_{\!\smalltriangledown} M$ be a cleft extension.

\begin{enumerate}

\smallskip

\item We let
$$
\pi_{\!A}\colon E\longrightarrow A\qquad\text{and}\qquad \pi_{\!M}\colon E \longrightarrow M
$$
denote the maps defined by $\pi_{\!A}(a,m):=a$ and $\pi_{\!M}(a,m):=m$, respectively.

\smallskip

\item Given $x,y\in A\bigcup M$ we set
\[
x \smalltriangledown y := \begin{cases} x \smalltriangledown y &\text{if $x,y\in M$,}\\0 &\text{otherwise.}\end{cases}
\]
We extend this definition for $x,y\in E$ by linearity.

\smallskip

\item Given $x,y\in A\bigcup M$ we set
\[
xy := \begin{cases} \text{the product of $x$ and $y$ in $A$} &\text{if $x,y\in A$,}\\ \text{the left action of $x$ on $y$} &\text{if $x\in A$ and $y\in M$,}\\ \text{the right action of $y$ on $x$} &\text{if $x\in M$ and $y\in A$,}\\ 0 &\text{if $x,y\in M$.}
\end{cases}
\]
We extend this definition for $x,y\in E$ by linearity.

\smallskip

\item For $0\le w\le n$, let $B^n_w\subseteq \ov{E}^{\ot^n}$ be the $k$-submodule spanned by the simple tensors $\bx_1^n$ such that exactly $w$ of the $x_i$'s belong to $M$, while the other ones belong to $\ov{A}$. To unify expressions we make the convention that $B_0^0 := k$ and $B^n_w := 0$, for $w<0$ or $n<w$.

\smallskip

\item We will say that a simple tensor $\bx_0^n\in E\ot\ov{E}^{\ot^n}\ot$ is very simple if $x_i\in A\cup M$ for all $i$.

\smallskip

\item For $\bx_0^n\in \bigl(M\ot B^n_w\ot\bigr)\bigcup \bigl(A\ot B^n_{w+1}\ot\bigr)$ and $0\le l\le n$, we define $\mu_j(\bx_0^n)$ by
$$
\qquad\quad \mu_j(\bx_0^n) := \begin{cases} (-1)^j \bx_0^{j-1}\ot x_jx_{j+1}\ot \bx_{j+2}^n &\text{if $0\le j< n$,}\\ (-1)^n x_nx_0\ot\bx_1^{n-1}&\text{If $j = n$.}\end{cases}
$$
Moreover we set
\begin{alignat*}{2}
&\qquad\quad\mu_0^A(\bx_0^n):= \pi_{\!A}(x_0x_1)\ot\bx_2^n,\qquad &&\mu_n^A(\bx_0^n) = (-1)^n \pi_{\!A}(x_nx_0)\ot \bx_1^{n-1},\\
&\qquad\quad\mu_0^M(\bx_0^n):= \pi_{\!M}(x_0x_1)\ot\bx_2^n,\qquad &&\mu_n^M(\bx_0^n) = (-1)^n \pi_{\!M}(x_nx_0)\ot \bx_1^{n-1}.
\end{alignat*}

\smallskip

\item For $\bx_0^n\in E\ot\ov{E}^{\ot^n}\ot$ and $0\le j\le n$, we define $\varrho_j(\bx_0^n)$ by
$$
\varrho_j(\bx_0^n) = \begin{cases} (-1)^j \bx_0^{j-1}\ot x_j \smalltriangledown x_{j+1}\ot \bx_{j+2}^n &\text{if $0\le j< n$,}\\ (-1)^n x_n \smalltriangledown x_0\ot\bx_1^{n-1}&\text{If $j = n$.}\end{cases}
$$

\smallskip

\item Let $\bx_0^n \in E\ot\ov{E}^{\ot^n}\ot$ be a very simple tensor. We let $i(\bx_0^n)$ and denote the last index $i$ such that $x_i\in M$.

\smallskip

\item For a very simple tensor $\bx_0^n\in E\ot\ov{E}^{\ot^n}\ot$, we define
$$
t(\bx_0^n):= (-1)^{i(\bx_0^n)n} \bx_{i(\bx_0^n)}^n\ot \bx_0^{i(\bx_0^n)-1}.
$$
We extend this definition to $E\ot\ov{E}^{\ot^n}\ot$ by linearity.
\end{enumerate}

\end{notations}

\section{Preliminaries}\label{preliminares}
In this section we recall some well known definitions and results that we will use in the rest of the paper. Let $C$ be a $k$-algebra.
\subsection{Double and triple complexes}
A double complex $\mathcal{X} := (X,d^v,d^h)$ of $C$-modules, is a family $(X_{pq})_{p,q\in\mathds{Z}}$ of $C$-modules, together with $C$-linear maps
\[
d^h\colon X_{pq}\longrightarrow X_{p-1,q}\qquad\text{and}\qquad d^v\colon X_{pq}\longrightarrow  X_{p,q-1},
\]
such that $d^h\xcirc d^h=0$, $d^v\xcirc d^v=0$ and $d^v \xcirc d^h + d^h\xcirc d^v =0$. The total complex of $(X,d^v,d^h)$ is the complex $\Tot(\mathcal{X}) = (X,d)$, in which
\[
X_n := \prod_p X_{p,n-p}\qquad\text{and}\qquad d := d^v+d^h.
\]
A morphism of double complexes $f\colon (X,d^v,d^h)\longrightarrow (Y,\de^v,\de^h)$ is a family of maps $f\colon X_{pq}\longrightarrow Y_{pq}$, such that $\de^v\xcirc f = f\xcirc d^v$ and $\de^h \xcirc f= f\xcirc d^h$. The morphism from $\Tot(X,d^v,d^h)$ to $\Tot(Y,\de^v,\de^h)$ induced by $f$ will be denoted $\Tot(f)$.

\smallskip

Similarly, one can give the notions of triple complex $\mathcal{X} := (X,d^v,d^h,d^d)$ and of morphism of triple complexes. For a triple complex $\mathcal{X}$, there are three ways for constructing a double complex by taking total complexes of double complexes. We call each one of these double complexes a partial total complex of $\mathcal{X}$. The total complex $\Tot(\mathcal{X})$ of $\mathcal{X}$, is the total complex of any of its partial total complexes. Of course, $\Tot(\mathcal{X})$ is independently of the chosen way to construct it.

\subsection{Mixed complexes}
In this subsection we recall briefly the notion of mixed complex. For more details about this concept we refer to~\cite{B} and~\cite{Ka1}.

\smallskip

A {\em mixed complex} $(X,b,B)$ is a graded $k$-module $(X_n)_{n\ge 0}$, endowed with morphisms
$$
b\colon X_n\longrightarrow X_{n-1}\qquad\text{and}\qquad B\colon X_n\longrightarrow X_{n+1},
$$
such that
$$
b\xcirc b = 0,\quad B\xcirc B = 0\quad\text{and}\quad B\xcirc b + b\xcirc B = 0.
$$
A {\em morphism of mixed complexes} $f\colon (X,b,B)\longrightarrow (Y,d,D)$ is a family of maps $f\colon X_n\longrightarrow Y_n$, such that $d\xcirc f = f\xcirc b$ and $D\xcirc f= f\xcirc B$. Let $u$ be a degree~$2$ variable. A mixed complex $\mathcal{X}:= (X,b,B)$ determines a double complex
$$
\begin{tikzpicture}
\begin{scope}[yshift=-0.47cm,xshift=-6cm]
\draw (0.5,0.5) node {$\BP(\mathcal{X})=$};
\end{scope}
\begin{scope}[scale=0.8]
\matrix(BPcomplex) [matrix of math nodes,row sep=2.5em, text height=1.5ex, text
depth=0.25ex, column sep=2.5em, inner sep=0pt, minimum height=5mm,minimum width =9.5mm]
{&\vdots &\vdots &\vdots &\vdots\\
\cdots & X_3 u^{-1} & X_2 u^0 & X_1 u^{} & X_0 u^2\\
\cdots & X_2 u^{-1} & X_1 u^0 & X_0 u\\
\cdots & X_1 u^{-1} & X_0 u^0\\
\cdots & X_0 u^{-1}\\ };
\draw[->] (BPcomplex-1-2) -- node[right=1pt,font=\scriptsize] {$b$} (BPcomplex-2-2);
\draw[->] (BPcomplex-1-3) -- node[right=1pt,font=\scriptsize] {$b$} (BPcomplex-2-3);
\draw[->] (BPcomplex-1-4) -- node[right=1pt, font=\scriptsize] {$b$} (BPcomplex-2-4);
\draw[->] (BPcomplex-1-5) -- node[right=1pt, font=\scriptsize] {$b$} (BPcomplex-2-5);
\draw[<-] (BPcomplex-2-1) -- node[above=1pt,font=\scriptsize] {$B$} (BPcomplex-2-2);
\draw[<-] (BPcomplex-2-2) -- node[above=1pt,font=\scriptsize] {$B$} (BPcomplex-2-3);
\draw[<-] (BPcomplex-2-3) -- node[above=1pt,font=\scriptsize] {$B$} (BPcomplex-2-4);
\draw[<-] (BPcomplex-2-4) -- node[above=1pt,font=\scriptsize] {$B$} (BPcomplex-2-5);
\draw[->] (BPcomplex-2-2) -- node[right=1pt,font=\scriptsize] {$b$} (BPcomplex-3-2);
\draw[->] (BPcomplex-2-3) -- node[right=1pt,font=\scriptsize] {$b$} (BPcomplex-3-3);
\draw[->] (BPcomplex-2-4) -- node[right=1pt, font=\scriptsize] {$b$} (BPcomplex-3-4);
\draw[<-] (BPcomplex-3-1) -- node[above=1pt,font=\scriptsize] {$B$} (BPcomplex-3-2);
\draw[<-] (BPcomplex-3-2) -- node[above=1pt,font=\scriptsize] {$B$} (BPcomplex-3-3);
\draw[<-] (BPcomplex-3-3) -- node[above=1pt,font=\scriptsize] {$B$} (BPcomplex-3-4);
\draw[->] (BPcomplex-3-2) -- node[right=1pt,font=\scriptsize] {$b$} (BPcomplex-4-2);
\draw[->] (BPcomplex-3-3) -- node[right=1pt,font=\scriptsize] {$b$} (BPcomplex-4-3);
\draw[<-] (BPcomplex-4-1) -- node[above=1pt,font=\scriptsize] {$B$} (BPcomplex-4-2);
\draw[<-] (BPcomplex-4-2) -- node[above=1pt,font=\scriptsize] {$B$} (BPcomplex-4-3);
\draw[->] (BPcomplex-4-2) -- node[right=1pt,font=\scriptsize] {$b$} (BPcomplex-5-2);
\draw[<-] (BPcomplex-5-1) -- node[above=1pt,font=\scriptsize] {$B$} (BPcomplex-5-2);
\end{scope}
\end{tikzpicture}
$$
\noindent where $b(\bx u^i):= b(\bx)u^i$ and $B(\bx u^i):= B(\bx)u^{i-1}$. By deleting the positively numbered columns we obtain a subcomplex $\BN(\mathcal{X})$ of $\BP(\mathcal{X})$. Let $\BN'(\mathcal{X})$ be the kernel of the canonical surjection from $\BN(\mathcal{X})$ to $(X,b)$. The quotient double complex $\BP(\mathcal{X})/\BN'(\mathcal{X})$ is denoted by $\BC(\mathcal{X})$. The homology groups $\HC_*(\mathcal{X})$, $\HN_*(\mathcal{X})$ and $\HP_*(\mathcal{X})$, of the total complexes of $\BC(\mathcal{X})$, $\BN(\mathcal{X})$ and $\BP(\mathcal{X})$ respectively, are called the {\em cyclic}, {\em negative} and {\em periodic homology groups} of $\mathcal{X}$. The homology $\HH_*(\mathcal{X})$, of $(X,b)$, is called the {\em Hochschild homology} of $\mathcal{X}$. Finally, it is clear that a morphism $f\colon\mathcal{X}\to\mathcal{Y}$ of mixed complexes induces a morphism from the double complex $\BP(\mathcal{X})$ to the double complex $\BP(\mathcal{Y})$.

\smallskip

Following \cite{Co} by a double mixed complex we will understand a bigraded module $X$ equipped with three $k$-linear maps of degree $\pm 1$: $b$, that lowers the first index and fixes the second one, $\beta$, that fixes the first index and lowers the second one, and $B$, that fixes the first index and increases the second one. These maps satisfy
\[
0 = b^2 = \beta^2 = B^2 = \beta\xcirc b + b\xcirc \beta = \beta \xcirc B + B\xcirc \beta = b \xcirc B + B\xcirc b.
\]
The {\em total mixed complex} of a double mixed complex $(X,b,\beta,B)$ is the mixed complex $(X,b + \beta,B)$, where $(X,b+\beta):= \Tot(X,b,\beta)$ and $B_n:= \bigoplus_{i+j=n} B_{ij}$. By definition, the Hochschild, cyclic, periodic and negative homologies of $(X,b,\beta,B)$ are the Hochschild, cyclic, periodic and negative homologies of $(X,b+\beta,B)$, respectively.

A morphism of double mixed complexes $f\colon (X,b,\beta,B)\longrightarrow (Y,d,\delta,D)$ is a family $f\colon X_{ij}\longrightarrow Y_{ij}$, such that $d\xcirc f = f\xcirc b$, $\delta\xcirc f = f\xcirc \beta$ and $D\xcirc f= f\xcirc B$. It is obvious that the correspondence $(X,b,\beta,B)\mapsto (X,b+\beta,B)$ is functorial.

\subsection{The relative Hochschild and cyclic homologies}
Let $C$ be a $K$-algebra. By definition, the {\em normalized mixed complex of the $K$-algebra $C$} is the mixed complex $\cramped{(C\ot \ov{C}^{\ot^*}\ot,b,B)}$, where $b$ is the canonical Hochschild boundary map and the Connes operator $B$ is given by
$$
B([\bc_{0r}]):= \sum_{i=0}^r [(-1)^{ir} \ot \bc_{ir}\ot \bc_{0,i-1}].
$$
The {\em cyclic}, {\em negative}, {\em periodic} and {\em Hochschild homology groups} $\HC^K_*(C)$, $\HN^K_*(C)$, $\HP^K_*(C)$ and $\HH^K_*(C)$ of $C$ are the respective homology groups of $\cramped{(C\ot\ov{C}^{\ot^*}\ot,b,B)}$.

\smallskip

Let $I$ be a two sided ideal of $C$ and let $D:= C/I$. The {\em cyclic, negative, periodic and Hochschild homologies $\HC^K_*(C,I)$, $\HN^K_*(C,I)$, $\HP^K_*(C,I)$ and $\HH^K_*(C,I)$, of the $K$-algebra $C$ relative to~$I$}, are by definition the respective homologies of the mixed complex
$$
\ker \bigl(\!\!\!\begin{tikzcd}[row sep=5.4em, column sep=5.4em]
(C\ot \ov{C}^{\ot^*}\ot,b,B)  \arrow{r}{\pi} & (D\ot \ov{D}^{\ot^*}\ot,b,B)
\end{tikzcd}\!\!\!\bigr),
$$
where $\pi$ is the map induced by the canonical projection from $C$ onto $D$.

\subsection{The perturbation lemma}
Next, we recall the perturbation lemma. We present the version given in~\cite{C}.

\smallskip

A {\em homotopy equivalence data}
$$
\begin{tikzpicture}[label distance=5mm]
\draw (0,0) node[label=left:(b)] {$(Y,\partial)$};\draw (2.5,0) node {$(X,d)$};\draw[<-] (0.6,0.12) -- node[above=-2pt,font=\scriptsize] {$p$}(1.9,0.12);\draw[->] (0.6,-0.12) -- node[below=-2pt,font=\scriptsize] {$i$}(1.9,-0.12);\draw (4,0) node {$X_*$};\draw (6.5,0) node {$X_{*+1}$};\draw[->] (4.4,0) -- node[above=-2pt,font=\scriptsize] {$h$}(5.9,0);
\end{tikzpicture}
$$
consists of the following:

\begin{enumerate}

\smallskip

\item Chain complexes $(Y,\partial)$, $(X,d)$ and quasi-isomorphisms $i$, $p$ between them,

\smallskip

\item A homotopy $h$ from $i\xcirc p$ to $\ide$.
\end{enumerate}

\smallskip

A {\em perturbation} of (b) is a map $\delta\colon X_*\longrightarrow X_{*-1}$ such that $(d+\delta)^2 = 0$. We call it {\em small} if $\ide -\delta\xcirc h$ is invertible. In this case we write $A = (\ide -\delta\xcirc h)^{-1}\xcirc\delta$ and we consider de diagram
$$
\begin{tikzpicture}[label distance=5mm]
\draw (0,0) node[label=left:(c)] {$(Y,\partial^1)$};\draw (2.5,0) node {$(X,d)$};\draw[<-] (0.6,0.12) -- node[above=-2pt,font=\scriptsize] {$p^1$}(1.9,0.12);\draw[->] (0.6,-0.12) -- node[below=-2pt,font=\scriptsize] {$i^1$}(1.9,-0.12);\draw (4,0) node {$X_*$};\draw (6.5,0) node {$X_{*+1}$};\draw[->] (4.4,0) -- node[above=-2pt,font=\scriptsize] {$h^1$}(5.9,0);
\end{tikzpicture}
$$
where
$$
\partial^1:=\partial + p\xcirc A\xcirc i,\quad i^1:= i + h\xcirc A\xcirc i,\quad p^1:= p + p\xcirc A\xcirc h,\quad h^1:= h + h\xcirc A\xcirc h.
$$
A {\em deformation retract} is a homotopy equivalence data such that $p\xcirc i =\ide$. A deformation retract is called {\em special} if $h\xcirc i = 0$, $p\xcirc h = 0$ and $h\xcirc h = 0$.

\smallskip

In all the cases considered in this paper the morphism $\delta\xcirc h$ is locally nilpotent. Consequently, $(\ide -\delta\xcirc h)^{-1} =\sum_{n=0}^{\infty} (\delta\xcirc h)^n$.

\begin{theorem}[{\cite{C}}]\label{lema de perturbacion} If $\delta$ is a small perturbation of~\emph{(b)}, then the diagram~\emph{(c)} is an homotopy equivalence data. Furthermore, if~\emph{(b)} is a special deformation retract, then so it is~\emph{(c)}.
\end{theorem}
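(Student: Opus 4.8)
The plan is to follow the classical route of homological perturbation theory (as in \cite{C}): reduce everything to a handful of algebraic identities for the operator $A$, and then do bookkeeping. First I would record the basic identities for $A$. Since $\delta$ is small, $A=(\ide-\delta\xcirc h)^{-1}\xcirc\delta$ satisfies
\[
(\ide-\delta\xcirc h)\xcirc A=\delta=A\xcirc(\ide-h\xcirc\delta),
\]
hence $A=\delta+\delta\xcirc h\xcirc A=\delta+A\xcirc h\xcirc\delta$; moreover $A=\delta\xcirc(\ide-h\xcirc\delta)^{-1}$ as well, the two expressions agreeing because $\delta\xcirc(\ide-h\xcirc\delta)=(\ide-\delta\xcirc h)\xcirc\delta$. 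In the situations relevant to this paper $\delta\xcirc h$ is locally nilpotent, so $A$ is the pointwise finite sum $\sum_{n\ge 0}(\delta\xcirc h)^n\xcirc\delta$. Besides these identities, the only inputs used below are $d^2=0$, $\partial^2=0$, the chain-map relations $d\xcirc i=i\xcirc\partial$ and $p\xcirc d=\partial\xcirc p$, the homotopy relation $d\xcirc h+h\xcirc d=i\xcirc p-\ide$ (up to the sign convention for $h$), and the perturbation relation $d\xcirc\delta+\delta\xcirc d+\delta\xcirc\delta=0$.

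The bulk of the proof is then a sequence of direct verifications. One checks, in order: that $(X,d+\delta)$ is a complex, which is merely the definition of a perturbation; that $(\partial^1)^2=0$, so that $(Y,\partial^1)$ is a complex; that $i^1$ and $p^1$ are chain maps $(Y,\partial^1)\to(X,d+\delta)$ and $(X,d+\delta)\to(Y,\partial^1)$; and that $h^1$ is a homotopy from $i^1\xcirc p^1$ to $\ide$ on $(X,d+\delta)$. In each case one expands the definitions and the unwanted terms cancel against each other using the identities above. This is routine, but it is the step where one has to be careful not to drop terms or mishandle signs.

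The one point with genuine content is that $i^1$ and $p^1$ must be \emph{quasi-isomorphisms}, not merely chain maps linked by $h^1$ --- this is not automatic, since the perturbation $\delta$ changes the homology of $X$ in general. When \emph{(b)} is a deformation retract, i.e.\ $p\xcirc i=\ide$, this is a free consequence of the last assertion below (once $p^1\xcirc i^1=\ide$, the homotopy $h^1$ exhibits $i^1$ and $p^1$ as mutually inverse homotopy equivalences). In the general case, from $h^1\colon i^1\xcirc p^1\simeq\ide$ one only gets that $H_*(i^1)$ is a split epimorphism and $H_*(p^1)$ a split monomorphism of $H_*(X,d+\delta)$, and upgrading these to isomorphisms requires more: one argues as in \cite{C}, for instance by first modifying $h$ so that \emph{(b)} becomes a special deformation retract and arguing there, or --- in the filtered situations arising in this paper, where $\delta$ strictly decreases a compatible filtration on both complexes --- by a spectral-sequence comparison whose associated graded recovers the unperturbed $i$ and $p$. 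I expect this quasi-isomorphism step to be the main obstacle; everything else is a guided, mechanical computation.

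Finally, to prove the last assertion, suppose \emph{(b)} is a special deformation retract, so that in addition $p\xcirc i=\ide$, $h\xcirc i=0$, $p\xcirc h=0$ and $h\xcirc h=0$. Expanding,
\[
p^1\xcirc i^1 = p\xcirc i + p\xcirc h\xcirc A\xcirc i + p\xcirc A\xcirc h\xcirc i + p\xcirc A\xcirc h\xcirc h\xcirc A\xcirc i = \ide,
\]
and $h^1\xcirc i^1=0$, $p^1\xcirc h^1=0$ and $h^1\xcirc h^1=0$ drop out the same way from $h\xcirc i=0$, $p\xcirc h=0$ and $h\xcirc h=0$. Hence \emph{(c)} is again a special deformation retract, which in particular makes the quasi-isomorphism claim automatic in that case.
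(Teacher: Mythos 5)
The paper gives no proof of this lemma: it is stated and attributed to Crainic \cite{C}, so there is no in-house argument to compare yours against. Your outline follows the standard homological-perturbation route and is sound as far as it goes; the identities for $A$, the list of verifications, and the explicit cancellations in the special-deformation-retract case are all correct, and you rightly single out the quasi-isomorphism clause in the definition of homotopy equivalence data as the only non-mechanical point.

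One small inaccuracy is your remark that the quasi-isomorphism claim becomes ``free'' once $p\xcirc i=\ide$: this conflates deformation retracts with \emph{special} deformation retracts. The computation $p^1\xcirc i^1=\ide$ genuinely uses $h\xcirc i=0$, $p\xcirc h=0$ and $h\xcirc h=0$; for a non-special deformation retract the cross terms $p\xcirc A\xcirc h\xcirc i$, $p\xcirc h\xcirc A\xcirc i$ and $p\xcirc A\xcirc h\xcirc h\xcirc A\xcirc i$ need not vanish, so one must first replace $h$ to make the retract special (or argue via a filtration, as you suggest). This is harmless in context, because every invocation of the lemma in this paper (Lemmas \ref{leA.3} and \ref{leA.4}, Proposition \ref{prop 3.10}) starts from a special deformation retract, so the delicate general case never arises. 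A bookkeeping tip for the mechanical verifications: the perturbed maps admit the closed forms $i^1=(\ide-h\xcirc\delta)^{-1}\xcirc i$, $p^1=p\xcirc(\ide-\delta\xcirc h)^{-1}$ and $h^1=h\xcirc(\ide-\delta\xcirc h)^{-1}=(\ide-h\xcirc\delta)^{-1}\xcirc h$, which make most of the cancellations you describe transparent.
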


\subsection{The suspension} The {\em suspension of a chain complex} $(X,b)$ is the complex $(X,b)[1] := (X[1],b[1])$, defined by
$$
X[1]_* := X_{*-1}\qquad\text{and}\qquad b[1]_* := -b_{*-1}.
$$
Similarly, the {\em suspension of a chain double complex} $(X,b,\beta)$ is the complex
$$
(X,b,\beta)[0,1] := (X[0,1],b[0,1],\beta[0,1]),
$$
defined by $X[0,1]_{**} := X_{*,*-1}$, $b[0,1]_{**} := -b_{*,*-1}$ and $\beta[0,1]_{**} := -\beta_{*,*-1}$.

\section{The relative cyclic homology of a cleft extension}\label{relative}
Let $C$ be a $k$-algebra and let $ p \colon C\to A$ be a morphism of $k$-algebras. Assume there exists a $k$-algebra morphism $i\colon A\to C$ such that $p \xcirc i = \ide$. Then $M := \ker(p)$ is naturally an $A$-bimodule endowed with an associative multiplication $\smalltriangledown\colon M\ot_A M  \longrightarrow M$ and $C$ is isomorphic to the cleft extension $E := A\ltimes_{\! \smalltriangledown} M$. Let $K$ be a $k$-subalgebra of $A$ and let
$$
\begin{tikzcd}[column sep=1cm]
(\breve{\mathfrak{X}},\breve{\mathfrak{b}},\breve{\mathfrak{B}}) :=\ker\bigl((E\ot \ov{E}^{\ot^*}\ot,b,B)  \arrow{r}{\pi} & (A\ot\ov{A}^{\ot^*}\ot,b,B)\bigr),
\end{tikzcd}
$$
\noindent where $\pi$ is the morphism induced by the canonical surjection of $E$ on $A$. Using that $E$ is a cleft extension of $A$, it is easy to see that the short exact sequence
$$
\begin{tikzcd}[column sep=1cm]
0 \arrow{r} & (\breve{\mathfrak{X}},\breve{\mathfrak{b}},\breve{\mathfrak{B}}) \arrow{r} & {(E\ot \ov{E}^{\ot^*}\ot,b,B)} \arrow{r}{\pi} & (A\ot\ov{A}^{\ot^*}\ot,b,B) \arrow{r} & 0
\end{tikzcd}
$$
\noindent splits. Hence,
\begin{align*}
& \HH^K(E) = \HH^K(A)\oplus \HH^K_*(E,M),\\
& \HC^K(E) = \HC^K(A)\oplus \HC^K_*(E,M),\\
& \HP^K(E) = \HP^K(A)\oplus \HP^K_*(E,M),\\
& \HN^K(E) = \HN^K(A)\oplus \HN^K_*(E,M).
\end{align*}
So in order to compute the type cyclic homologies of the $K$-algebra $E$, it suffices to calculate those of the $K$-algebra $A$ and those of the $K$-algebra $E$ relative to $M$. With this in mind, in this section we obtain a double mixed complex, simpler than the canonical one, giving the Hochschild, cyclic, periodic and negative homologies of the $K$-algebra $E$ relative to $M$. Then we show that the cyclic homology of the $K$-algebra $E$ relative to $M$ is also given by a still simpler complex. From now on we often will use indices $v,w$ and $n$, which always will satisfy the relation $n=v+w$.

Let $\hat{\mathfrak{X}}_{vw} := (M\ot B^n_w\ot)\oplus (A\ot B^n_{w+1}\ot)$. It is evident that $(\breve{\mathfrak{X}},\breve{\mathfrak{b}})$ is the total complex of the second quadrant double complex
$$
\begin{tikzcd}[column sep=0.8cm,row sep=0.8cm]
& \vdots  \arrow{d}{\hat{\mathfrak{b}}} & \vdots \arrow{d}{\hat{\mathfrak{b}}}& \vdots \arrow{d}{\hat{\mathfrak{b}}}\\
\dots \arrow{r}{\hat{\mathfrak{d}}} & \hat{\mathfrak{X}}_{22} \arrow{r}{\hat{\mathfrak{d}}} \arrow{d}{\hat{\mathfrak{b}}} & \hat{\mathfrak{X}}_{21} \arrow{r}{\hat{\mathfrak{d}}} \arrow{d}{\hat{\mathfrak{b}}} & \hat{\mathfrak{X}}_{20} \arrow{d}{\hat{\mathfrak{b}}}\\
\dots \arrow{r}{\hat{\mathfrak{d}}} & \hat{\mathfrak{X}}_{12} \arrow{r}{\hat{\mathfrak{d}}} \arrow{d}{\hat{\mathfrak{b}}} & \hat{\mathfrak{X}}_{11} \arrow{r}{\hat{\mathfrak{d}}} \arrow{d}{\hat{\mathfrak{b}}} & \hat{\mathfrak{X}}_{10} \arrow{d}{\hat{\mathfrak{b}}}\\
\dots \arrow{r}{\hat{\mathfrak{d}}} & \hat{\mathfrak{X}}_{02} \arrow{r}{\hat{\mathfrak{d}}} & \hat{\mathfrak{X}}_{01} \arrow{r}{\hat{\mathfrak{d}}} & \hat{\mathfrak{X}}_{00},
\end{tikzcd}
$$
\noindent where the boundary maps
$$
\hat{\mathfrak{b}}\colon \hat{\mathfrak{X}}_{vw} \longrightarrow \hat{\mathfrak{X}}_{v-1,w}\qquad\text{and}\qquad \hat{\mathfrak{d}}\colon \hat{\mathfrak{X}}_{vw} \longrightarrow \hat{\mathfrak{X}}_{v,w-1}
$$
are defined by the formulas
$$
\hat{\mathfrak{b}}(\bx,\byy):= (\mathfrak{b}^0(\bx)+\alpha(\byy),\mathfrak{b}^1(\byy)) \qquad\text{and}\qquad \hat{\mathfrak{d}}(\bx,\byy) := \sum_{j=0}^n (\varrho_j(\bx), \varrho_j(\byy)),
$$
in which
$$
\mathfrak{b}^0:= \sum_{j=0}^n\mu_l,\qquad\mathfrak{b}^1:= \mu_0^A + \sum_{j=1}^{n-1}\mu_l + \mu_n^A\qquad \text{and}\qquad \alpha:= \mu_0^M + \mu_n^M.
$$
Furthermore $(\hat{\mathfrak{X}},\hat{\mathfrak{b}},\hat{\mathfrak{d}},\hat{\mathfrak{B}})$, where $\hat{\mathfrak{B}}$ is defined by
$$
\hat{\mathfrak{B}}(\bx,\byy):= (0,B(\bx)+B(\byy)),
$$
is a double mixed complex and its total mixed complex is $(\breve{\mathfrak{X}},\breve{\mathfrak{b}}, \breve{\mathfrak{B}})$.

\subsection{Complexes for the relative homologies}
For $v,w\ge 0$, let $X_{vw} := M\ot B_w^n\ot$. By convenience we put $X_{vw} := 0$, otherwise. Consider the triple diagram
$$
\begin{tikzpicture}[xscale=0.9, yscale=0.75]
\begin{scope}[xshift=0cm,yshift=0cm]
\node at (-6,-4){$\mathcal{X}:=$};
\end{scope}
\begin{scope}[xshift=2cm,yshift=0cm]
\filldraw [black]  (-3,0)  circle (0.3pt); \filldraw [black]  (-3,-0.2)  circle (0.3pt); \filldraw [black]  (-3,-0.4)  circle (0.3pt);
\filldraw [black]  (0,0)  circle (0.3pt); \filldraw [black]  (0,-0.2)  circle (0.3pt); \filldraw [black]  (0,-0.4)  circle (0.3pt);
\filldraw [black]  (3,0)  circle (0.3pt); \filldraw [black]  (3,-0.2)  circle (0.3pt); \filldraw [black]  (3,-0.4)  circle (0.3pt);
\draw[->] (-3,-0.6) --  node[left=-2 pt] {$\scriptstyle -b$} (-3,-2.7); \draw[->] (0,-0.6) -- node[left=-2 pt] {$\scriptstyle b$} (0,-2.7); \draw[->] (3,-0.6) -- node[left=-2pt] {$\scriptstyle -b$} (3,-2.7);
\draw (-3,-3) node {$\scriptstyle X_{11}$}; \draw (0,-3) node {$\scriptstyle X_{11}$}; \draw (3,-3) node {$\scriptstyle X_{11}$};
\filldraw [black]  (-1.8,-0.8)  circle (0.3pt); \filldraw [black]  (-1.9,-1)  circle (0.3pt); \filldraw [black]  (-2,-1.2)  circle (0.3pt);
\draw[->] (-2.1,-1.4) --  node[left=-2 pt, pos=0.2] {$\scriptstyle -d'$} (-2.7,-2.7); \draw[->] (0.9,-1.4) --  node[left=-1 pt, pos=0.2] {$\scriptstyle d$} (0.3,-2.7); \draw[->] (3.9,-1.4) --  node[left=-2 pt, pos=0.2] {$\scriptstyle -d'$} (3.3,-2.7);
\filldraw [black]  (1.2,-0.8)  circle (0.3pt); \filldraw [black]  (1.1,-1)  circle (0.3pt); \filldraw [black]  (1,-1.2)  circle (0.3pt);
\filldraw [black]  (4.2,-0.8)  circle (0.3pt); \filldraw [black]  (4.1,-1)  circle (0.3pt); \filldraw [black]  (4,-1.2)  circle (0.3pt);
\filldraw [black]  (-4,-2)  circle (0.3pt); \filldraw [black]  (-4,-2.2)  circle (0.3pt); \filldraw [black]  (-4,-2.4)  circle (0.3pt);
\filldraw [black]  (-1,-2)  circle (0.3pt); \filldraw [black]  (-1,-2.2)  circle (0.3pt); \filldraw [black]  (-1,-2.4)  circle (0.3pt);
\filldraw [black]  (2,-2)  circle (0.3pt); \filldraw [black]  (2,-2.2)  circle (0.3pt); \filldraw [black]  (2,-2.4)  circle (0.3pt);
\draw (-4,-5) node {$\scriptstyle X_{10}$}; \draw (-1,-5) node {$\scriptstyle X_{10}$}; \draw (2,-5) node {$\scriptstyle X_{10}$};
\draw[->] (-4,-2.6) --  node[left=-2 pt] {$\scriptstyle -b$} (-4,-4.7); \draw[->] (-1,-2.6) --  node[left=-2 pt] {$\scriptstyle b$} (-1,-4.7); \draw[->] (2,-2.6) --  node[left=-2 pt] {$\scriptstyle -b$} (2,-4.7);
\draw[->] (-3.1,-3.4) --  node[left=-2 pt, pos=0.2] {$\scriptstyle -d'$} (-3.7,-4.7); \draw[->] (-0.1,-3.4) --  node[left=-1 pt, pos=0.2] {$\scriptstyle d$} (-0.7,-4.7);\draw[->] (2.9,-3.4) --  node[left=-2 pt, pos=0.2] {$\scriptstyle -d'$} (2.3,-4.7);
\draw (-3.4,-3) -- (-3.9,-3); \draw[->] (-4.1,-3)--node[above=-2 pt, pos=0.5] {$\scriptstyle \ide -t$} (-5.6,-3); \draw (-0.4,-3) -- (-0.9,-3);  \draw[->] (-1.1,-3)--node[above=-2 pt, pos=0.5] {$\scriptstyle N$} (-2.6,-3); \draw (2.6,-3) -- (2.1,-3); \draw[->] (1.9,-3)--node[above=-2 pt, pos=0.5] {$\scriptstyle \ide -t$} (0.4,-3); \draw[->] (5.6,-3)--node[above=-2 pt, pos=0.5] {$\scriptstyle N$} (3.4,-3);
\filldraw [black]  (-5.8,-3)  circle (0.3pt); \filldraw [black]  (-6,-3)  circle (0.3pt); \filldraw [black]  (-6.2,-3)  circle (0.3pt);
\filldraw [black]  (5.8,-3)  circle (0.3pt); \filldraw [black]  (6,-3)  circle (0.3pt); \filldraw [black]  (6.2,-3)  circle (0.3pt);
\draw[->] (-4.4,-5)--node[above=-2 pt, pos=0.2] {$\scriptstyle \ide -t$} (-6.6,-5); \draw[->] (-1.4,-5)--node[above=-2 pt, pos=0.2] {$\scriptstyle N$} (-3.6,-5); \draw[->] (1.6,-5)--node[above=-2 pt, pos=0.2] {$\scriptstyle \ide -t$} (-0.6,-5); \draw[->] (4.6,-5)--node[above=-2 pt, pos=0.2] {$\scriptstyle N$} (2.4,-5);
\filldraw [black]  (-6.8,-5)  circle (0.3pt); \filldraw [black]  (-7,-5)  circle (0.3pt); \filldraw [black]  (-7.2,-5)  circle (0.3pt);
\filldraw [black]  (4.8,-5)  circle (0.3pt); \filldraw [black]  (5,-5)  circle (0.3pt); \filldraw [black]  (5.2,-5)  circle (0.3pt);
\draw (-3,-6) node {$\scriptstyle X_{01}$}; \draw (0,-6) node {$\scriptstyle X_{01}$}; \draw (3,-6) node {$\scriptstyle X_{01}$};
\draw (-3,-3.4) --  node[left=-2 pt, pos=0.7] {$\scriptstyle -b$} (-3,-4.9); \draw[->] (-3,-5.1)  -- (-3,-5.7); \draw (0,-3.4) --  node[left=-2 pt, pos=0.7] {$\scriptstyle b$} (0,-4.9); \draw[->] (0,-5.1)  -- (0,-5.7); \draw (3,-3.4) --  node[left=-2 pt, pos=0.7] {$\scriptstyle -b$} (3,-4.9); \draw[->] (3,-5.1)  -- (3,-5.7);
\draw (-3.4,-6) -- (-3.9,-6); \draw[->] (-4.1,-6)--node[above=-2 pt, pos=0.5] {$\scriptstyle \ide -t$} (-5.6,-6); \draw (-0.4,-6) -- (-0.9,-6);  \draw[->] (-1.1,-6)--node[above=-2 pt, pos=0.5] {$\scriptstyle N$} (-2.6,-6); \draw (2.6,-6) -- (2.1,-6); \draw[->] (1.9,-6)--node[above=-2 pt, pos=0.5] {$\scriptstyle \ide -t$} (0.4,-6); \draw[->] (5.6,-6)--node[above=-2 pt, pos=0.5] {$\scriptstyle N$} (3.4,-6);
\filldraw [black]  (-5.8,-6)  circle (0.3pt); \filldraw [black]  (-6,-6)  circle (0.3pt); \filldraw [black]  (-6.2,-6)  circle (0.3pt);
\filldraw [black]  (5.8,-6)  circle (0.3pt); \filldraw [black]  (6,-6)  circle (0.3pt); \filldraw [black]  (6.2,-6)  circle (0.3pt);
\draw (-4,-8) node {$\scriptstyle X_{00}$}; \draw (-1,-8) node {$\scriptstyle X_{00}$}; \draw (2,-8) node {$\scriptstyle X_{00}$};
\draw[->] (-4.4,-8)--node[above=-2 pt, pos=0.5] {$\scriptstyle \ide -t$} (-6.6,-8); \draw[->] (-1.4,-8)--node[above=-2 pt, pos=0.5] {$\scriptstyle N$} (-3.6,-8); \draw[->] (1.6,-8)--node[above=-2 pt, pos=0.5] {$\scriptstyle \ide -t$} (-0.6,-8); \draw[->] (4.6,-8)--node[above=-2 pt, pos=0.5] {$\scriptstyle N$} (2.4,-8);
\draw[->] (-4,-5.4) --  node[left=-2 pt] {$\scriptstyle -b$} (-4,-7.7); \draw[->] (-1,-5.4) --  node[left=-2 pt] {$\scriptstyle b$} (-1,-7.7); \draw[->] (2,-5.4) --  node[left=-2 pt] {$\scriptstyle -b$} (2,-7.7);
\filldraw [black]  (-6.8,-8)  circle (0.3pt); \filldraw [black]  (-7,-8)  circle (0.3pt); \filldraw [black]  (-7.2,-8)  circle (0.3pt);
\filldraw [black]  (4.8,-8)  circle (0.3pt); \filldraw [black]  (5,-8)  circle (0.3pt); \filldraw [black]  (5.2,-8)  circle (0.3pt);
\filldraw [black]  (-1.8,-3.8)  circle (0.3pt); \filldraw [black]  (-1.9,-4)  circle (0.3pt); \filldraw [black]  (-2,-4.2)  circle (0.3pt);
\filldraw [black]  (1.2,-3.8)  circle (0.3pt); \filldraw [black]  (1.1,-4)  circle (0.3pt); \filldraw [black]  (1,-4.2)  circle (0.3pt);
\filldraw [black]  (4.2,-3.8)  circle (0.3pt); \filldraw [black]  (4.1,-4)  circle (0.3pt); \filldraw [black]  (4,-4.2)  circle (0.3pt);
\draw (-2.1,-4.4) --  node[left=-2 pt, pos=0.4] {$\scriptstyle -d'$} (-2.34,-4.92); \draw[->] (-2.43,-5.09) -- (-2.7,-5.7);
\draw (0.9,-4.4) --  node[left=-2 pt, pos=0.4] {$\scriptstyle d$} (0.66,-4.92); \draw[->] (0.57,-5.09) -- (0.3,-5.7);
\draw (3.9,-4.4) --  node[left=-2 pt, pos=0.4] {$\scriptstyle -d'$} (3.66,-4.92); \draw[->] (3.57,-5.09) -- (3.3,-5.7);
\draw[->] (-3.1,-6.4) --  node[left=-2 pt, pos=0.2] {$\scriptstyle -d'$} (-3.7,-7.7); \draw[->] (-0.1,-6.4) --  node[left=-1 pt, pos=0.2] {$\scriptstyle d$} (-0.7,-7.7);\draw[->] (2.9,-6.4) --  node[left=-2 pt, pos=0.2] {$\scriptstyle -d'$} (2.3,-7.7);
\end{scope}
\end{tikzpicture}
$$
where
\begin{alignat*}{2}
& b(\bx_0^n) := \sum_{j=0}^n \mu_j(\bx_0^n),&&\qquad\quad d'(\bx_0^n) := \sum_{j=0}^{n-1}\varrho_j(\bx_0^n),\\
&d(\bx_0^n) := d'(\bx_0^n) + \varrho_n(\bx_0^n), &&\qquad\quad N(\bx_0^n) := \sum_{l=0}^w t^l(\bx_0^n),
\end{alignat*}
the middle face $(X,b,d)$ is the $0$-th face and the bottom row is the $0$-th row. Note that the map $N\colon X_{v0}\longrightarrow X_{v0}$ is the identity map and the map $\ide - t\colon X_{v0}\longrightarrow X_{v0}$ is the zero map.

\smallskip

For $l\in \mathds{Z}$, let $\tau^l(\mathcal{X})$ be the subdiagram of $\mathcal{X}$ obtaining by deleting the faces $(X,b,d)$ and $(X,-b,d')$ placed in columns with index greater than $l$, and let $\tau_0(\mathcal{X})$ and $\tau_0^1(\mathcal{X})$ be the quotient triple diagrams of $\mathcal{X}$ by $\tau^{-1}(\mathcal{X})$ and $\tau^1(\mathcal{X})$ by $\tau^{-1}(\mathcal{X})$, respectively.

\begin{theorem}\label{th3.1} $\mathcal{X}$ is a triple complex. Moreover
\begin{align*}
&\HH^K_*(E,M) = \Ho_*(\Tot(\tau_0^1(\mathcal{X}))),\\
&\HC^K_*(E,M) = \Ho_*(\Tot(\tau_0(\mathcal{X}))),\\
&\HP^K_*(E,M) = \Ho_*(\Tot(\mathcal{X})),\\
&\HN^K_*(E,M) = \Ho_*(\Tot(\tau^1(\mathcal{X}))).
\end{align*}
\end{theorem}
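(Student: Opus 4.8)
The plan is to derive Theorem~\ref{th3.1} from the perturbation lemma (Theorem~\ref{lema de perturbacion}), starting from the double mixed complex $(\hat{\mathfrak{X}},\hat{\mathfrak{b}},\hat{\mathfrak{d}},\hat{\mathfrak{B}})$ introduced above. By construction its total mixed complex $(\breve{\mathfrak{X}},\breve{\mathfrak{b}},\breve{\mathfrak{B}})$ computes the four relative homologies, so these are the homologies of the total complexes of $\BP(\breve{\mathfrak{X}},\breve{\mathfrak{b}},\breve{\mathfrak{B}})$, of $\BN(\breve{\mathfrak{X}},\breve{\mathfrak{b}},\breve{\mathfrak{B}})$, of $\BC(\breve{\mathfrak{X}},\breve{\mathfrak{b}},\breve{\mathfrak{B}})$ and of $(\breve{\mathfrak{X}},\breve{\mathfrak{b}})$, respectively. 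Since $(\breve{\mathfrak{X}},\breve{\mathfrak{b}},\breve{\mathfrak{B}})$ comes from a double mixed complex, each of these four is the total complex of a triple complex built by gluing copies of the double complex $(\hat{\mathfrak{X}},\hat{\mathfrak{b}},\hat{\mathfrak{d}})$ along $\hat{\mathfrak{B}}$: $\mathds{Z}$-many copies for $\BP$, the copies indexed by $\mathds{Z}_{\le 0}$ for $\BN$, the quotient by the copies indexed by $\mathds{Z}_{<0}$ for $\BC$, and a single copy for $(\breve{\mathfrak{X}},\breve{\mathfrak{b}})$. Thus it is enough to produce a homotopy equivalence, compatible with these gluings, between the total complex of such a triple complex and the total complex of $\mathcal{X}$ (respectively $\tau^1(\mathcal{X})$, $\tau_0(\mathcal{X})$, $\tau_0^1(\mathcal{X})$), under which each copy of $(\hat{\mathfrak{X}},\hat{\mathfrak{b}},\hat{\mathfrak{d}})$ is replaced by a consecutive pair of faces of $\mathcal{X}$: the summand $M\ot B^n_w\ot$ of $\hat{\mathfrak{X}}_{vw}$ giving a face of type $(X,b,d)$, and the summand $A\ot B^n_{w+1}\ot$, after a contraction, a face of type $(X,-b,d')$.

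The ingredient to which the perturbation lemma is applied is an explicit special deformation retract of the diagonal part of $(\hat{\mathfrak{X}},\hat{\mathfrak{b}},\hat{\mathfrak{d}})$ — that is, of $(\hat{\mathfrak{X}},\hat{\mathfrak{b}}_0,\hat{\mathfrak{d}})$ with $\hat{\mathfrak{b}}_0(\bx,\byy):=(\mathfrak{b}^0(\bx),\mathfrak{b}^1(\byy))$ — onto a subobject concentrated on the modules $X_{vw}$: the identity on each summand $M\ot B^n_w\ot$, and a genuine contraction of $(A\ot B^*_{w+1}\ot,\mathfrak{b}^1)$ onto a copy of $(M\ot B^{*-1}_w\ot,-b)$ (with a degree shift) given by a normalized ``insert/delete a unit''-type homotopy $h$; this is where $\mathds{Q}\subseteq k$ and the $k$-linear splittings $A=K\oplus\ov{A}$ and $E=A\oplus M$ enter, and one also checks that $\hat{\mathfrak{d}}$ restricts to $d$ on the first kind of face and to $-d'$ on the second. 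One then takes as perturbation $\delta$ the remaining part $\byy\mapsto(\alpha(\byy),0)$ of $\hat{\mathfrak{b}}$ together with the Connes operator $\hat{\mathfrak{B}}$. As in the remark following Theorem~\ref{lema de perturbacion}, $\delta\xcirc h$ is locally nilpotent — $h$ raises the tensor length while $\delta$ changes it by a bounded amount — so $\delta$ is a small perturbation and $(\ide-\delta\xcirc h)^{-1}=\sum_{n\ge 0}(\delta\xcirc h)^n$.

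It then remains to identify the perturbed differential on the retract. Its diagonal part gives $b,d$ on the faces coming from the $X$-summands and $-b,-d'$ on the contracted faces. The term $\alpha=\mu_0^M+\mu_n^M$, fed through the series $\sum_{n\ge 0}(\delta\xcirc h)^n\xcirc\delta$, must telescope to the operator $\ide-t$ joining each contracted face to the $X$-face of the same copy — the cyclic operator $t$ appearing precisely because $h$ drags the distinguished tensor factor once around the cyclic word — while $\hat{\mathfrak{B}}$, fed through the same series, must telescope to $N=\ide+t+\cdots+t^w$ joining the $X$-face of a copy to the contracted face of the neighbouring copy. Granting these two computations and the accompanying sign bookkeeping, the perturbed triple complex is exactly $\mathcal{X}$, with the copies indexed by $\mathds{Z}_{\le 0}$, $\mathds{Z}_{\ge 0}$ and $\{0\}$ becoming the faces of index $\le 1$, $\ge 0$ and in $\{0,1\}$; passing to homology yields the four displayed formulas. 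In particular $\mathcal{X}$, obtained from a triple complex by a small perturbation followed by passage to a retract carried out compatibly with the trigrading, is itself a triple complex.

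The main obstacle is precisely these two telescoping computations of $(\ide-\delta\xcirc h)^{-1}\xcirc\delta$ on $\alpha$ and on $\hat{\mathfrak{B}}$, together with the verification that the homotopy, the perturbation and the (infinite) totalizations are compatible enough — e.g.\ that the column and weight filtrations are exhaustive and complete — for the homotopy equivalences to descend to homology in all four cases, including the negative one. As an independent check that $\mathcal{X}$ is a triple complex one can also argue directly: $b^2=0$, and $d^2=(d')^2=0$ because $\smalltriangledown$ is associative; $bd+db=0$ since $b+d$ is the normalized Hochschild boundary of $E$ restricted to the $M$-leading subcomplex, and $bd'+d'b=0$ by the mixed simplicial identities between the $\mu_j$'s and the $\varrho_j$'s (which hold because $\smalltriangledown$ is an $A$-bimodule map); and, using that $t^{w+1}=\ide$ on $X_{vw}$ together with the compatibility of $t$ with $b$, $d$ and $d'$, one obtains $(\ide-t)N=N(\ide-t)=0$ and the anticommutativity of the face direction with the other two.
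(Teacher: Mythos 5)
Your strategy is in essence the paper's: contract each column $(A\ot B^{n}_{w+1}\ot,\mathfrak{b}^1)$ onto a shifted copy of $(X_{vw},-b)$ via an ``insert a unit'' homotopy, then invoke the perturbation lemma so that $\alpha$ produces the operator $\ide-t$ and the Connes map produces $N$. The difference is organizational and not purely cosmetic. The paper first proves a homotopy equivalence of \emph{double mixed complexes} $(\hat{X},\hat{b},\hat{d},\hat{B})\simeq(\hat{\mathfrak{X}},\hat{\mathfrak{b}},\hat{\mathfrak{d}},\hat{\mathfrak{B}})$ (Theorem~\ref{th3.2}, proved in Appendix~A by three \emph{successive} perturbations: first $\alpha$ one $w$-column at a time, then $\hat{\mathfrak{d}}$, then $\hat{\mathfrak{B}}$), and Theorem~\ref{th3.1} is then the one-line translation of that statement through the four identifications $\Tot(\tau_0^1(\mathcal{X}))=(\breve{X},\breve{b})$, etc. Each successive perturbation series in Appendix~A \emph{truncates after the first term} thanks to identities such as $\hat{\ep}\xcirc\hat{\mathfrak{d}}\xcirc\hat{\vartheta}=0$, $\hat{\ep}\xcirc\hat{\mathfrak{d}}\xcirc\hat{\ep}=0$, $\breve{\ep}\xcirc\breve{\mathfrak{B}}=0$, $\breve{\mathfrak{B}}\xcirc\breve{\ep}=0$, so there is no telescoping to control. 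By lumping $\alpha$ and $\hat{\mathfrak{B}}$ into a single perturbation you generate cross terms between the two that you then have to show telescope to $\ide-t$ and $N$ separately; this is true, but it is strictly more to verify, and you have to do it once for each of the four totalizations $\BC$, $\BN$, $\BP$ and $(\breve{X},\breve{b})$ — including a completeness argument for the infinite products in $\BN$ and $\BP$, a point you flag but do not resolve. In the paper all four cases fall out of the single double-mixed-complex equivalence.

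There is also one assertion that does not hold as written: that the retract is a deformation retract of the \emph{double} complex $(\hat{\mathfrak{X}},\hat{\mathfrak{b}}_0,\hat{\mathfrak{d}})$, i.e.\ that $\hat{\theta}$, $\hat{\vartheta}$, $\hat{\ep}$ intertwine $\hat{\mathfrak{d}}$ with $\hat{d}$ on the nose, so that $\hat{\mathfrak{d}}$ simply ``restricts'' to $d$ and $-d'$. They do not: $\hat{\theta}\xcirc\hat{\mathfrak{d}}\neq\hat{d}\xcirc\hat{\theta}$ in general, and only the weaker compositions $\hat{\theta}\xcirc\hat{\mathfrak{d}}\xcirc\hat{\vartheta}=\hat{d}$, $\hat{\ep}\xcirc\hat{\mathfrak{d}}\xcirc\hat{\vartheta}=0$, $\hat{\theta}\xcirc\hat{\mathfrak{d}}\xcirc\hat{\ep}=0$, $\hat{\ep}\xcirc\hat{\mathfrak{d}}\xcirc\hat{\ep}=0$ hold. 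This is precisely why the paper puts $\hat{\mathfrak{d}}$ into the perturbation (Lemma~\ref{leA.3}) rather than into the initial differential; your outline should do the same. Finally, the two ``telescoping'' computations you postpone (that $\alpha$, fed through $(\ide-\delta\xcirc h)^{-1}\xcirc\delta$, yields $\ide-t$, and that $\hat{\mathfrak{B}}$ yields $N$) are the technical core — the first is essentially Lemmas~\ref{leA.1}--\ref{leA.2}, quoted from~\cite{G-G} — so until they are carried out, and the direct-product issue in the $\BN$/$\BP$ cases is addressed, what you have is a correct identification of the mechanism rather than a complete proof.
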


\noindent Consequently, the following equalities hold:
\begin{alignat*}{3}
& d\xcirc b = - b\xcirc d,&&\qquad d'\xcirc b = - b\xcirc d',&&\qquad d'\xcirc N =  N\xcirc d,\\
& d\xcirc (\ide-t) = (\ide-t)\xcirc d',&&\qquad b\xcirc N = N\xcirc b,&&\qquad t\xcirc b = b\xcirc t.
\end{alignat*}
We will use these equalities freely throughout the paper.

\smallskip

Theorem~\ref{th3.1} is a consequence of Theorem~\ref{th3.2}, which we enounce below and whose proof will be relegated to Appendix~A. For $v,w\ge 0$, let $\hat{X}_{vw} := X_{vw}\oplus X_{v-1,w}$. Consider the diagram $(\hat{X},\hat{b},\hat{d})$ where the maps
$$
\hat{b}\colon \hat{X}_{vw} \longrightarrow \hat{X}_{v-1,w}\qquad\text{and}\qquad \hat{d}\colon \hat{X}_{vw} \longrightarrow \hat{X}_{v,w-1}
$$
are defined by
\[
\hat{b}(\bx,\byy) := \bigl(b(\bx) + (\ide-t)(\byy),-b(\byy)\bigr)\qquad\text{and}\qquad \hat{d}(\bx,\byy) := \bigl(d(\bx),-d'(\byy)\bigr).
\]
Note that $(\hat{X},\hat{b},\hat{d})$ is one of the partial total complexes of the triple complex $\tau_0^1(\mathcal{X})$, and so $\Tot(\tau_0^1(\mathcal{X})) = \Tot(\hat{X},\hat{b},\hat{d})$ (but we have not proven that $\tau_0^1(\mathcal{X})$ is a triple complex, yet). Let
$$
\hat{B}\colon \hat{X}_{vw}\longrightarrow \hat{X}_{v+1,w},\qquad \hat{\theta}\colon \hat{\mathfrak{X}}_{vw}\longrightarrow \hat{X}_{vw} \qquad\text{and}\qquad \hat{\vartheta}\colon \hat{X}_{vw}\longrightarrow \hat{\mathfrak{X}}_{vw}
$$
be the maps defined by
\begin{align*}
&\hat{B}(\bx,\byy) := (0,N(\bx)),\\
&\hat{\theta}(\bx,\byy) := (\bx + t(\byy),\mu_0^M(\byy)),\\
&\hat{\vartheta}(\bx,\byy) := \sum_{l=0}^{n-i(\byy)} \bigl(\bx,1\ot\mathfrak{t}^l(\byy)\bigr),
\end{align*}
where $\mathfrak{t}(y_0\ot\cdots\ot y_{n-1}) := (-1)^{n-1} y_{n-1}\ot y_0\ot\cdots\ot y_{n-2}$.

\begin{theorem}\label{th3.2} The following assertions hold:

\begin{enumerate}

\smallskip

\item $(\hat{X},\hat{b},\hat{d},\hat{B})$ is a double mixed complex.

\smallskip

\item Let $(\breve{X},\breve{b})$ be the total complex of $(\hat{X},\hat{b},\hat{d})$. The maps
\[\qquad\quad
\hat{\vartheta}\colon (\hat{X},\hat{b},\hat{d},\hat{B}) \longrightarrow (\hat{\mathfrak{X}},\hat{\mathfrak{b}}, \hat{\mathfrak{d}},\hat{\mathfrak{B}}) \qquad \text{and}\qquad \hat{\theta}\colon (\hat{\mathfrak{X}},\hat{\mathfrak{b}},\hat{\mathfrak{d}},\hat{\mathfrak{B}}) \longrightarrow (\hat{X},\hat{b},\hat{d},\hat{B}),
\]
are morphisms of double mixed complexes such that $\hat{\theta}\xcirc \hat{\vartheta} = \ide$. Moreover $\breve{\vartheta}\xcirc \breve{\theta}$ is homotopic to the identity map, where
\[\qquad\quad
\breve{\vartheta}\colon (\breve{X},\breve{b}) \longrightarrow (\breve{\mathfrak{X}},\breve{\mathfrak{b}}) \qquad \text{and}\qquad \breve{\theta}\colon (\breve{\mathfrak{X}},\breve{\mathfrak{b}}) \longrightarrow (\breve{X},\breve{b}),
\]
are the morphisms induced by $\hat{\vartheta}$ and $\hat{\theta}$, respectively.

\smallskip

\item The Hochschild, cyclic, periodic and negative homologies of $(\hat{X},\hat{b},\hat{d},\hat{B})$ are the Hoch\-schild, cyclic, periodic and negative homologies of the $K$-algebra $E$ relative to $M$, respectively.

\end{enumerate}
\end{theorem}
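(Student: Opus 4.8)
The plan is to deduce Theorem~\ref{th3.2} from a package of elementary identities among the operators $b$, $d$, $d'$, $t$, $N$, $B$, plus a single application of the perturbation lemma (Theorem~\ref{lema de perturbacion}). First I would establish, directly from the definitions of the face operators $\mu_j$, $\varrho_j$, $\mu_0^M$, $\mu_n^M$ and of $t$, $N$, $B$, the identities $b\xcirc b=0$, $d\xcirc d=0$, $d'\xcirc d'=0$, $B\xcirc B=0$, $d\xcirc b+b\xcirc d=0$, $d'\xcirc b+b\xcirc d'=0$, $B\xcirc b+b\xcirc B=0$, $t\xcirc b=b\xcirc t$, $N\xcirc b=b\xcirc N$, $d\xcirc(\ide-t)=(\ide-t)\xcirc d'$ and $d'\xcirc N=N\xcirc d$, together with $(\ide-t)\xcirc N=N\xcirc(\ide-t)=0$ on each $X_{vw}$ (this last one being equalities~\eqref{eq1} for $w\ge 1$, and trivial for $w=0$, where $\ide-t=0$ and $N=\ide$). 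These are proved by the same bookkeeping that gives $b\xcirc b=0$ for an ordinary Hochschild complex; associativity of $\smalltriangledown$ is what makes $d\xcirc d=0$ and $d'\xcirc d'=0$ (Hochschild- and bar-type differentials for $\smalltriangledown$). The $\smalltriangledown$-associativity batch is precisely the list of equalities displayed right after Theorem~\ref{th3.1}, so no circularity is incurred.

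\smallskip

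Given these, part~(1) is purely formal: substituting $\hat{b}(\bx,\byy)=(b(\bx)+(\ide-t)(\byy),-b(\byy))$, $\hat{d}(\bx,\byy)=(d(\bx),-d'(\byy))$ and $\hat{B}(\bx,\byy)=(0,N(\bx))$ into the six defining relations of a double mixed complex, each relation collapses to a combination of the identities above -- for instance $\hat{d}\xcirc\hat{b}+\hat{b}\xcirc\hat{d}=0$ reduces to $d\xcirc b+b\xcirc d=0$, $d\xcirc(\ide-t)=(\ide-t)\xcirc d'$ and $d'\xcirc b+b\xcirc d'=0$, while $\hat{b}\xcirc\hat{B}+\hat{B}\xcirc\hat{b}=0$ reduces to $(\ide-t)\xcirc N=N\xcirc(\ide-t)=0$ and $N\xcirc b=b\xcirc N$, and $\hat{B}\xcirc\hat{B}=0$ is immediate because $\hat{B}$ only reads the first coordinate. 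For the first half of part~(2) one checks by direct computation that $\hat{\vartheta}$ and $\hat{\theta}$ intertwine $\hat{b}$ with $\hat{\mathfrak{b}}$, $\hat{d}$ with $\hat{\mathfrak{d}}$ and $\hat{B}$ with $\hat{\mathfrak{B}}$, and that $\hat{\theta}\xcirc\hat{\vartheta}=\ide$. The latter rests on two observations: in the normalized complex $t$ annihilates every simple tensor $1\ot z$ whenever $z$ involves a factor from $M$ (cyclically permuting pushes the unit $1\in K$ into an $\ov{E}$-slot, where it vanishes), and the sum $\sum_l\mu_0^M(1\ot\mathfrak{t}^l(\byy))$ collapses to $\byy$ because $i(\byy)$ is the last $M$-slot of $\byy$, so $\mathfrak{t}^0(\byy)=\byy$ is the only rotation in the relevant range whose leading factor lies in $M$.

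\smallskip

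The substantive point is the homotopy $\breve{\vartheta}\xcirc\breve{\theta}\simeq\ide$, which I would obtain from Theorem~\ref{lema de perturbacion}. Regard $(\breve{\mathfrak{X}},\breve{\mathfrak{b}})$ as the total complex of the double complex $(\hat{\mathfrak{X}},\hat{\mathfrak{b}},\hat{\mathfrak{d}})$: here $\hat{\mathfrak{b}}$ preserves the second index $w$ (it uses only the concatenation $xy$, which annihilates adjacent pairs from $M$) while $\hat{\mathfrak{d}}$ strictly lowers $w$ (it is the $\smalltriangledown$-part). For each fixed $w$ I would build a special deformation retract of the column $(\hat{\mathfrak{X}}_{\bullet w},\hat{\mathfrak{b}})$ onto $(\hat{X}_{\bullet w},\hat{b})$: the summands $M\ot B^n_w\ot$ coincide, and $A\ot B^n_{w+1}\ot$ is contracted onto $M\ot B^{n-1}_w\ot$ by cyclically rotating the last $M$-factor to the front and absorbing the leading element of $A$ via the unit, the deficiency of this rotation being carried by an explicit homotopy $h_w$ made of partial rotations, chosen so that $\hat{\theta}_w\xcirc\hat{\vartheta}_w=\ide$, $h_w\xcirc\hat{\vartheta}_w=0$, $\hat{\theta}_w\xcirc h_w=0$ and $h_w\xcirc h_w=0$. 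Assembling over $w$ yields a special deformation retract $(\breve{X},\hat{b})\rightleftarrows(\breve{\mathfrak{X}},\hat{\mathfrak{b}})$ with homotopy $h$, and $\hat{\mathfrak{d}}$ is a small perturbation of it, since $\hat{\mathfrak{d}}\xcirc h$ lowers $w$ and is hence locally nilpotent, so $(\ide-\hat{\mathfrak{d}}\xcirc h)^{-1}=\sum_{k\ge 0}(\hat{\mathfrak{d}}\xcirc h)^k$. Theorem~\ref{lema de perturbacion} then produces a special deformation retract $(\breve{X},\partial^1)\rightleftarrows(\breve{\mathfrak{X}},\breve{\mathfrak{b}})$ with $p^1\xcirc i^1=\ide$ and $i^1\xcirc p^1\simeq\ide$ via $h^1$. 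It remains to evaluate the (finite) perturbation series and check that $\partial^1$ is the total differential $\breve{b}$ of $(\hat{X},\hat{b},\hat{d})$, that $i^1=\breve{\vartheta}$ and that $p^1=\breve{\theta}$ -- this is exactly where the closed forms involving $\mathfrak{t}^l$ and $\mu_0^M$ come from -- whence $\breve{\vartheta}\xcirc\breve{\theta}=i^1\xcirc p^1\simeq\ide$. Finally, part~(3) follows from part~(2): $\breve{\vartheta}$ is a morphism of mixed complexes which is a homotopy equivalence of the underlying $\breve{b}$-complexes, hence induces isomorphisms on $\HH$ and, via the column filtrations of $\BC$, $\BN$, $\BP$ and the five lemma applied to the $SBI$-sequences, also on $\HC$, $\HN$ and $\HP$; and $(\breve{\mathfrak{X}},\breve{\mathfrak{b}},\breve{\mathfrak{B}})$ is by definition the normalized mixed complex computing the Hochschild, cyclic, periodic and negative homologies of the $K$-algebra $E$ relative to $M$.

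\smallskip

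The step I expect to be the genuine obstacle is the construction of the columnwise homotopy $h_w$ together with the verification that the perturbation series for $i^1$, $p^1$ and $\partial^1$ sum to the stated closed forms; everything else is either formal or a long but routine manipulation of face operators.
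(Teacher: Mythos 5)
Your plan follows the same architecture as the paper's Appendix~A for parts~(1) and~(2): build a columnwise special deformation retract of $(\hat{\mathfrak{X}}_{*w},\hat{\mathfrak{b}})$ onto $(\hat{X}_{*w},\hat{b})$ (the rotation-by-unit contraction of $A\ot B^n_{w+1}\ot$ onto $M\ot B^{n-1}_w\ot$ is exactly Lemma~\ref{leA.1}, the columnwise statement is Lemma~\ref{leA.2}), then treat $\hat{\mathfrak{d}}$ as a small perturbation and evaluate the perturbation series, checking $\hat\theta\xcirc\hat{\mathfrak{d}}\xcirc\hat\vartheta=\hat d$, $\hat\ep\xcirc\hat{\mathfrak{d}}\xcirc\hat\vartheta=0$, $\hat\theta\xcirc\hat{\mathfrak{d}}\xcirc\hat\ep=0$, $\hat\ep\xcirc\hat{\mathfrak{d}}\xcirc\hat\ep=0$ so that the perturbed data coincide with $\breve b$, $\breve\vartheta$, $\breve\theta$. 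A cosmetic difference is that you establish part~(1) and the algebra of $b,d,d',t,N$ by direct computation, whereas the paper extracts most of these identities as by-products of the perturbation argument itself and verifies only $d'\xcirc N=N\xcirc d$ (and $t\xcirc b=b\xcirc t$) by hand; your route is more explicit but correct.

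The step where I think you genuinely underestimate the issue is part~(3). Your claim that $\breve\vartheta$ being an $\HH$-quasi-isomorphism of mixed complexes forces isomorphisms on $\HN$ and $\HP$ ``via the column filtrations and the five lemma applied to the SBI-sequences'' is too quick: the SBI five-lemma induction works for $\HC$, whose column filtration is bounded below, but $\Tot\BN$ and $\Tot\BP$ involve infinite products in each total degree, the column filtration is unbounded, and the inductive SBI argument has no base case. One can repair this with a Milnor $\lim$/$\lim^1$ argument over the column truncations, but that is a different (and longer) argument than the five lemma. The paper instead promotes the special deformation retract directly to the $\BW$-level by applying the perturbation lemma once more, perturbing $\Tot\BW(\breve{\mathfrak{X}},\breve{\mathfrak{b}},0)\rightleftarrows\Tot\BW(\breve{X},\breve{b},0)$ by $\breve{\mathfrak{B}}$; the perturbation is small because the homotopy $\breve\ep$ satisfies $\breve{\mathfrak{B}}\xcirc\breve\ep=0$ and $\breve\ep\xcirc\breve{\mathfrak{B}}=0$ \emph{exactly} (not just up to local nilpotence), so the geometric series collapses and one gets an honest chain homotopy equivalence on $\Tot\BP$ and $\Tot\BN$ as well, together with the identification $\breve B=\breve\theta\xcirc\breve{\mathfrak{B}}\xcirc\breve\vartheta$. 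You should either adopt that route or supply the $\lim^1$ comparison argument explicitly; as written, the five-lemma/SBI sentence does not cover $\HN$ and $\HP$.
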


\begin{proof} See Appendix~A.
\end{proof}

\noindent {\bf Proof of Theorem~\ref{th3.1}.}\enspace Let $(\breve{X},\breve{b},\breve{B})$ be the mixed complex associated with $(\hat{X},\hat{b},\hat{d},\hat{B})$. Theorem~\ref{th3.1} follows immediately from Theorem~\ref{th3.2} and the fact that
\begin{align*}
&\Tot(\tau_0^1(\mathcal{X})) = (\breve{X},\breve{b}),\\
&\Tot(\tau_0(\mathcal{X})) = \Tot(\BC(\breve{X},\breve{b},\breve{B})),\\
&\Tot(\mathcal{X}) = \Tot(\BP(\breve{X},\breve{b},\breve{B})),\\
&\Tot(\tau^1(\mathcal{X})) = \Tot(\BN(\breve{X},\breve{b},\breve{B})),
\end{align*}
which can be easily checked.\qed

\medskip

The following result was proven in~\cite{G-G}.

\begin{lemma}\label{le3.3} The rows of the triple complex $\mathcal{X}$ are contractible.
\end{lemma}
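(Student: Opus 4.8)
The plan is to use only the relations~\eqref{eq1} and the hypothesis $\mathds{Q}\subseteq k$ in order to split each row of $\mathcal{X}$, as a complex of $k$-modules, into a direct sum of two-term complexes whose differential is an isomorphism. Recall that, for fixed $v,w\ge 0$, the corresponding row of $\mathcal{X}$ is the complex
\[
\cdots \xrightarrow{\ N\ } X_{vw} \xrightarrow{\ \ide-t\ } X_{vw} \xrightarrow{\ N\ } X_{vw} \xrightarrow{\ \ide-t\ } X_{vw} \xrightarrow{\ N\ }\cdots,
\]
in which the arrows alternate between $\ide-t$ and $N=\sum_{l=0}^{w}t^l$.

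First I would observe that $t^{w+1}=\ide$ on $X_{vw}$: the equality $\ima(N)=\ker(\ide-t)$ of~\eqref{eq1} gives $(\ide-t)\xcirc N=0$, that is $N=t\xcirc N$, and comparing $N=\sum_{l=0}^{w}t^l$ with $t\xcirc N=\sum_{l=1}^{w+1}t^l$ forces $\ide=t^{w+1}$. Consequently $N\xcirc t=N$, hence $N^2=(w+1)N$, and since $w+1$ is invertible in $k$ the endomorphism $e:=\tfrac1{w+1}N$ of $X_{vw}$ is idempotent. By the two equalities in~\eqref{eq1} its image is $Y_+:=\ima(N)=\ker(\ide-t)$ and its kernel is $Y_-:=\ker(N)=\ima(\ide-t)$, so $X_{vw}=Y_+\oplus Y_-$. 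On $Y_+$ the map $\ide-t$ vanishes while $N$ acts as $(w+1)\ide$; on $Y_-$ the map $N$ vanishes while $\ide-t$ is invertible, a two-sided inverse being $g:=-\tfrac1{w+1}\sum_{l=1}^{w}l\,t^l$ (to see $(\ide-t)\xcirc g=\ide$ on $Y_-$ one uses the relation $\sum_{l=1}^{w}t^l=-\ide$, valid on $Y_-=\ker N$; the other composite then coincides because $g$ and $\ide-t$ commute). Since $Y_+$ and $Y_-$ are $t$-invariant, they are preserved by every arrow of the row, so the row is the direct sum of the complex
\[
\cdots\to Y_+\xrightarrow{(w+1)\ide}Y_+\xrightarrow{\ 0\ }Y_+\xrightarrow{(w+1)\ide}Y_+\to\cdots
\]
and the complex
\[
\cdots\to Y_-\xrightarrow{\ 0\ }Y_-\xrightarrow{\ide-t}Y_-\xrightarrow{\ 0\ }Y_-\to\cdots .
\]

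Each of these is obviously a direct sum of two-term complexes of the form $Y_+\xrightarrow{(w+1)\ide}Y_+$, respectively $Y_-\xrightarrow{\ide-t}Y_-$, all of whose differentials are isomorphisms; hence each is contractible, and so is their direct sum. Concretely, a contracting homotopy for the whole row is given degreewise by $\tfrac1{w+1}\ide$ on the $Y_+$-summand in the degrees whose outgoing differential is $\ide-t$, by $g$ on the $Y_-$-summand in the degrees whose outgoing differential is $N$, and by $0$ elsewhere. I expect no serious obstacle here: the only computations needed are the identity $t^{w+1}=\ide$ (which, as above, is immediate from~\eqref{eq1}) and the verification of the inverse $g$. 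One could also bypass $g$ altogether by arguing invariantly: writing $G:=\mathds{Z}/(w+1)\mathds{Z}$, the identity $t^{w+1}=\ide$ makes $X_{vw}$ a module over the semisimple algebra $\mathds{Q}[G]$, the row equals $P_\bullet\ot_{\mathds{Q}[G]}X_{vw}$ for the $2$-periodic complete resolution $P_\bullet=\bigl(\cdots\xrightarrow{\ide-t}\mathds{Q}[G]\xrightarrow{N}\mathds{Q}[G]\xrightarrow{\ide-t}\cdots\bigr)$, and $P_\bullet$ is contractible because it is an exact complex of modules over a semisimple ring; contractibility is then preserved by the additive functor $-\ot_{\mathds{Q}[G]}X_{vw}$.
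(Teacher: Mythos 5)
Your proof is correct, but it takes a genuinely different route from the paper. The paper simply exhibits the two operators $\sigma := \tfrac1{w+1}\ide$ and $\sigma' := \sum_{j=0}^{w-1}\tfrac{w-j}{w+1}t^j$, checks by direct computation that $\sigma\xcirc N = N\xcirc\sigma=\tfrac1{w+1}N$ and $(\ide-t)\xcirc\sigma'=\sigma'\xcirc(\ide-t)=\ide-\tfrac1{w+1}N$, and notes that alternating $\sigma,\sigma'$ along the row gives a contracting homotopy (the two identities sum to $\ide$). You instead make the conceptual observation that $e:=\tfrac1{w+1}N$ is idempotent, decompose $X_{vw}$ as $\ima(e)\oplus\ker(e)$, and show the row splits into two pieces each of which is visibly a direct sum of two-term complexes whose differential is an isomorphism. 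The semisimplicity remark at the end is a clean invariant way of saying the same thing. What the paper's approach buys is an explicit formula for $\sigma'$ that is reused later in the paper (in the definitions of $\Upsilon$ and $\ov{\xi}$ and throughout Appendix~B); your $g=-\tfrac1{w+1}\sum_{l=1}^w l\,t^l$ differs from $\sigma'$ by $\tfrac{w}{w+1}N$, so the two agree on $\ker N$ but the paper's $\sigma'$ is the globally defined operator it actually wants.

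One caution about your use of~\eqref{eq1}: those equalities are precisely the statement that the rows are exact, hence a \emph{consequence} of this lemma, and the paper asserts them in the introduction without proof. Taking them as given would be circular. Fortunately you only genuinely need the ``easy'' inclusions $(\ide-t)\xcirc N=0$ and $N\xcirc(\ide-t)=0$, together with $t^{w+1}=\ide$ and $N^2=(w+1)N$; and all of these follow immediately from the definition of $t$ as a cyclic permutation of the $w+1$ factors lying in $M$ (no need to extract $t^{w+1}=\ide$ from~\eqref{eq1} as you do). When you write ``By the two equalities in~\eqref{eq1} its image is $Y_+=\ima(N)=\ker(\ide-t)$\,\dots'' you are invoking the hard direction, but it is not actually used: all you need is that $\ide-t$ vanishes on $\ima(N)$ (easy) and that $\ide-t$ is invertible on $\ker(N)$ (your explicit $g$). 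Rephrased to avoid citing~\eqref{eq1}, your argument is self-contained and sound.
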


\begin{proof} For $v,w\ge 0$, let $\sigma,\sigma'\colon X_{vw}\longrightarrow X_{vw}$ be the maps defined by
$$
\sigma := \frac{1} {w+1}\ide\qquad\text{and}\qquad \sigma' := \sum_{j=0}^{w-1} \frac{w-j}{w+1} t^j.
$$
A direct computation shows that:
\allowdisplaybreaks
\begin{align}
&\sigma\xcirc N = N\xcirc\sigma = \frac{1}{w+1} N,\label{*eq4}\\
& (\ide-t)\xcirc \sigma' = \sigma'\xcirc (\ide-t) = \sum_{j=0}^{w-1} \frac{w-j}{w+1} t^j - \sum_{j=1}^w \frac{w-j+1}{w+1} t^j = \ide - \frac{1}{w+1}N.\label{*eq5}
\end{align}
The result follows immediately from these equalities.
\end{proof}

\begin{theorem}\label{th3.4} Let $(\ov{X},\ov{b},\ov{d})$ be the cokernel of $\ide-t\colon (X,-b,-d') \longrightarrow (X,b,d)$. The relative cyclic homology $\HC^K_*(E,M)$ is the homology of $(\ov{X},\ov{b},\ov{d})$.
\end{theorem}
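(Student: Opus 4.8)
The plan is to derive this from Theorem~\ref{th3.1}, which identifies $\HC^K_*(E,M)$ with $\Ho_*(\Tot(\tau_0(\mathcal{X})))$, together with the contractibility of the rows of $\mathcal{X}$ proved in Lemma~\ref{le3.3}. First I would unwind $\tau_0(\mathcal{X})$ by merging the indices $v,w$ into a single vertical degree: then $\Tot(\tau_0(\mathcal{X}))$ is the total complex of a double complex whose $c$-th column ($c\ge 0$) is $\Tot(X,b,d)$ for $c$ even and $\Tot(X,-b,-d')$ for $c$ odd, with horizontal differential from column $c$ to column $c-1$ equal to $\ide-t$ for $c$ odd and to $N$ for $c$ even. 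Note that $(\ov X,\ov b,\ov d)$ is well defined: since $b\xcirc(\ide-t)=(\ide-t)\xcirc b$ and $d\xcirc(\ide-t)=(\ide-t)\xcirc d'$, the submodule $\ima(\ide-t)$ is stable under $b$ and $d$, and $\Tot(\ov X,\ov b,\ov d)$ is exactly the $0$-th column of the above double complex modulo the image of its incoming horizontal map.

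The core observation is that, for fixed $v,w$, the horizontal complex
\[
\cdots \xrightarrow{\,N\,} X_{vw} \xrightarrow{\,\ide-t\,} X_{vw} \xrightarrow{\,N\,} X_{vw} \xrightarrow{\,\ide-t\,} X_{vw}
\]
(with the last term in column $0$) has homology concentrated in column $0$, where it equals $X_{vw}/\ima(\ide-t)=\ov X_{vw}$. Indeed, by Lemma~\ref{le3.3} --- concretely, by the operators $\sigma,\sigma'$ constructed in its proof, which assemble into a contraction of the doubly infinite row of $\mathcal{X}$ --- the displayed complex is exact at every term of positive degree; equivalently, this is the content of $\ima(\ide-t)=\ker N$ and $\ima N=\ker(\ide-t)$. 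Since $\mathds{Q}\subseteq k$, one may alternatively observe that $e:=\frac{1}{w+1}N$ is an idempotent with $\ima e=\ker(\ide-t)$ and $\ker e=\ima(\ide-t)$, so that $X_{vw}=\ker(\ide-t)\oplus\ima(\ide-t)$ and the row deformation retracts onto $\ima e\cong\ov X_{vw}$ sitting in column $0$.

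I would then feed this into the spectral sequence of $\Tot(\tau_0(\mathcal{X}))$ attached to the filtration by vertical degree (equivalently: take horizontal homology first). The decisive finiteness is that $\tau_0(\mathcal{X})$ retains only the columns $c\ge 0$, so $\Tot(\tau_0(\mathcal{X}))_n$ is a \emph{finite} direct sum of the $X_{vw}$, the filtration is finite in each total degree, and the spectral sequence converges. By the previous paragraph its $E^1$-page is concentrated in column $c=0$ and coincides there with the complex $(\ov X,\ov b+\ov d)=\Tot(\ov X,\ov b,\ov d)$; being supported in a single column it degenerates, giving $\Ho_*(\Tot(\tau_0(\mathcal{X})))=\Ho_*(\Tot(\ov X,\ov b,\ov d))$, which is the assertion. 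Alternatively, and more in line with the rest of the paper, the same homotopy equivalence follows from Theorem~\ref{lema de perturbacion}: start from $\tau_0(\mathcal{X})$ carrying only its horizontal differential, use the idempotent splitting above to obtain a special deformation retract onto $(\ov X,0)$, and take $b+d$ as perturbation --- it is small because $(b+d)\xcirc h$ raises the column index and is hence locally nilpotent.

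The one point requiring real care is the convergence in the last step: it relies on having passed to the cyclic truncation $\tau_0(\mathcal{X})$, which makes the complex degreewise finite --- precisely what would fail for $\HP^K_*(E,M)$, where the analogous reduction is false. Everything else is immediate from Lemma~\ref{le3.3} and the identities listed after Theorem~\ref{th3.1}.
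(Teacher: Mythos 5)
Your argument is correct and is essentially the paper's own argument with the details filled in: the paper's proof consists of the single line ``This follows immediately from Theorem~\ref{th3.1} and Lemma~\ref{le3.3},'' and you carry out exactly this plan, using the row contraction of Lemma~\ref{le3.3} (equivalently, the identities $\ima(\ide-t)=\ker N$, $\ima N=\ker(\ide-t)$, which hold because $t^{w+1}=\ide$ on $X_{vw}$) to collapse the $c\ge 0$ truncation $\tau_0(\mathcal{X})$ onto its $0$-th column $(\ov X,\ov b,\ov d)$ via a degreewise-finite spectral sequence or, equivalently, the perturbation lemma. Your remark that this relies on the truncation to $c\ge 0$ (and would fail for $\HP$) correctly identifies where the convergence hypothesis is used.
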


\begin{proof} This follows immediately from Theorem~\ref{th3.1} and Lemma~\ref{le3.3}.
\end{proof}

\subsection{Graded algebras} Let $E = A_0\oplus A_1\oplus A_2\oplus\cdots$ be a graded algebra. Let $M:= A_1\oplus A_2\oplus\cdots$ be the augmentation ideal of $E$. Clearly $E$ is a cleft extension of $A:=A_0$ by $M$. Let $K$ be a $k$-subalgebra of $A$. As it is well known, the double mixed complex $(\hat{\mathfrak{X}}, \hat{\mathfrak{b}},\hat{\mathfrak{d}},\hat{\mathfrak{B}})$ decompose as the direct sum
\begin{equation}
(\hat{\mathfrak{X}},\hat{\mathfrak{b}},\hat{\mathfrak{d}},\hat{\mathfrak{B}}) = \bigoplus_{i\in \mathds{N}} (\hat{\mathfrak{X}}^{(i)},\hat{\mathfrak{b}},\hat{\mathfrak{d}},\hat{\mathfrak{B}}),\label{*eq6}
\end{equation}
where $\hat{\mathfrak{X}}^{(i)}_{vw}$ is the $k$-submodule of $\hat{\mathfrak{X}}_{vw}$ generated by the simple tensors $a_0\ot\cdots\ot a_n$ such that each $a_j$ is homogeneous of degree $|a_j|$ and $\sum_j |a_j|  = i$. Similarly, the double mixed complex $(\hat{X},\hat{b},\hat{d},\hat{B})$ decompose as the direct sum
$$
(\hat{X},\hat{b},\hat{d},\hat{B}) = \bigoplus_{i\in \mathds{N}} (\hat{X}^{(i)},\hat{b},\hat{d},\hat{B}),
$$
where $\hat{X}^{(i)}$ is defined in the same manner as $\hat{\mathfrak{X}}^{(i)}_{vw}$,  and the morphisms $\hat{\vartheta}$ and $\hat{\theta}$ are compatible with these decompositions. Moreover these maps induce homotopy equivalence between the complexes $(\hat{X}^{(i)},\hat{b},\hat{d},\hat{B})$ and $(\hat{\mathfrak{X}}^{(i)},\hat{\mathfrak{b}},\hat{\mathfrak{d}},\hat{\mathfrak{B}})$. In order to check this fact it suffices to note that the homotopy $\breve{\epsilon}$, introduced in item~4) of Lemma~\ref{leA.3}, is compatible with the decomposition~\eqref{*eq6}.

\section{The harmonic decomposition}\label{The harmonic decomposition}
As in the proof of Theorem~\ref{th3.1}, we let $(\breve{X},\breve{b}, \breve{B})$ denote the mixed complex associated with the double mixed complex $(\hat{X},\hat{b},\hat{d}, \hat{B})$, introduced in Theorem~\ref{th3.2}. Our purpose in this section is to show that $(\breve{X},\breve{b}, \breve{B})$ has a harmonic decomposition like the one studied in \cite{C-Q2}. In order to carry out this task we need to define a de Rham coboundary map and a Karoubi operator on $(\breve{X},\breve{b})$. As we said in the introduction we are going to work with a new double mixed complex $(\ddot{X},\ddot{b},\ddot{d},\ddot{B})$, whose associated mixed complex is also $(\breve{X},\breve{b},\breve{B})$. In the first three subsections we follow closely the exposition of \cite{C-Q2}.

\subsection{The Rham coboundary map and the Karoubi operator} It is easy to see that $\tau_0^1(\mathcal{X})$ is the total complex of the double complex
$$
(\ddot{X}{,\hspace{1pt}} \ddot{b}{,\hspace{1pt}}\ddot{d}):=
\begin{tikzcd}[column sep=0.8cm,row sep=0.8cm]
&& \vdots  \arrow{d}{\ddot{b}} & \vdots \arrow{d}{\ddot{b}}& \vdots \arrow{d}{\ddot{b}}\\
&\dots \arrow{r}{\ddot{d}} & \ddot{X}_{22} \arrow{r}{\ddot{d}} \arrow{d}{\ddot{b}} & \ddot{X}_{21} \arrow{r}{\ddot{d}} \arrow{d}{\ddot{b}} & \ddot{X}_{20} \arrow{d}{\ddot{b}}\\
&\dots \arrow{r}{\ddot{d}} & \ddot{X}_{12} \arrow{r}{\ddot{d}} \arrow{d}{\ddot{b}} & \ddot{X}_{11} \arrow{r}{\ddot{d}} \arrow{d}{\ddot{b}} & \ddot{X}_{10} \arrow{d}{\ddot{b}}\\
&\dots \arrow{r}{\ddot{d}} & \ddot{X}_{02} \arrow{r}{\ddot{d}} & \ddot{X}_{01} \arrow{r}{\ddot{d}} & \ddot{X}_{00},
\end{tikzcd}
$$
where $\ddot{X}_{vw} := X_{vw}\oplus X_{v,w-1}$ and the boundary maps are defined by
\[
\ddot{b}(\bx,\byy) := (b(\bx),-b(\byy))\qquad\text{and}\qquad \ddot{d}(\bx,\byy) := (d(\bx)+ (\ide-t)(\byy),-d'(\byy)).
\]
The de Rham coboundary map  $\ddot{d}\sR\colon \ddot{X}_{vw}\longrightarrow \ddot{X}_{v,w+1}$ is defined by $\ddot{d}\sR(\bx,\byy):=(0,\bx)$. It is obvious that $(\ddot{X},\ddot{d}\sR)$ is acyclic. We now define the Karoubi operator of $\ddot{X}$. Let
$$
\ddot{\kappa}(0)\colon \ddot{X}_{vw}\longrightarrow \ddot{X}_{vw}\qquad\text{and}\qquad \ddot{\kappa}(1)\colon \ddot{X}_{vw} \longrightarrow \ddot{X}_{vw}
$$
be the maps defined by
\[
\ddot{\kappa}(0)(\bx,\byy) := (t(\bx),t(\byy))\qquad\text{and}\qquad \ddot{\kappa}(1)(\bx,\byy) := \bigl(0,d'(\bx) -d(\bx)\bigr).
\]
The Karoubi operator $\ddot{\kappa}$ of $\ddot{X}$ is the degree zero operator defined by
\[
\ddot{\kappa} := \ddot{\kappa}(0) + \ddot{\kappa}(1).
\]
Let $\breve{d}\sR\colon \breve{X}_n\longrightarrow \breve{X}_{n+1}$ and $\breve{\kappa}\colon \breve{X}_n\to \breve{X}_n$ be the maps defined by
$$
\breve{d}\sdR_n := \bigoplus_{w=0}^n \ddot{d}\sdR_{n-w,w}\qquad\text{and}\qquad\breve{\kappa}_n := \bigoplus_{w=0}^n \ddot{\kappa}_{n-w,w},
$$
respectively. A direct computation shows that
\begin{equation}
\ide - \ddot{\kappa} = \ddot{d} \xcirc \ddot{d}\sdR+ \ddot{d}\sdR \xcirc \ddot{d} \qquad\text{and}\qquad 0 = \ddot{b} \xcirc \ddot{d}\sdR+ \ddot{d}\sdR \xcirc \ddot{b}.\label{*eq7}
\end{equation}
In particular, $\ddot{\kappa}$ is homotopic to the identity with respect to either of the differentials $\ddot{d}$, $\ddot{d}\sR$, and so it commutes with them. From~\eqref{*eq7} it follows that
\[
\ide - \breve{\kappa} = \breve{b} \xcirc  \breve{d}\sdR +  \breve{d}\sdR \xcirc \breve{b}.
\]
Consequently $\breve{\kappa}$ commutes with $\breve{b}$ and $\breve{d}\sR$. Hence $\ddot{\kappa}$ also commutes with $\ddot{b}$ (which can be also proven by a direct computation). Let $\ddot{B}\colon \ddot{X}_{vw}\longrightarrow \ddot{X}_{v,w+1}$ be the map defined by $\ddot{B}(\bx,\byy) := (0,N(\bx))$. An easy computation shows that $(\ddot{X},\ddot{b},\ddot{d},\ddot{B})$ is a double mixed complex and that its associated mixed complex is $(\breve{X},\breve{b},\breve{B})$. Furthermore,
$$
\ddot{B}(\bx) = \sum_{i=0}^w \ddot{\kappa}^i \xcirc \ddot{d}\sR(\bx) \qquad\text{for all $\bx\in \ddot{X}_{vw}$.}
$$
Using this we obtain
\[
\ddot{B}\xcirc  \ddot{\kappa} =  \ddot{\kappa}\xcirc \ddot{B} = \ddot{B} \qquad\text{and}\qquad \ddot{d}\sdR\xcirc \ddot{B} = \ddot{B} \xcirc  \ddot{d}\sdR = 0.
\]

\subsection{The harmonic decomposition} From the definition of $\ddot{\kappa}$ it follows immediately that
\[
(\ddot{\kappa}^w-\ide) \xcirc (\ddot{\kappa}^{w+1}-\ide)(\ddot{X}_{vw}) \subseteq (\kappa^w-\ide)(0\oplus X_{v,w-1}) = 0.
\]
This implies that $\ddot{\kappa}$ satisfies the polynomial equation $P_w(\ddot{\kappa}) = 0$ on $\ddot{X}_{vw}$, where
\[
P_w = (X^{w+1}-1)(X^w-1).
\]
The roots of $P_w$ are the $r$-th roots of unity, with $r = w+1$ and $r=w$. Moreover, $1$ is a double root and the all other roots are simple. Consequently $\ddot{X}_{vw}$ decomposes into the direct sum of the generalized eigenspace $\ker(\ddot{\kappa} - \ide)^2$ and its complement $\ima(\ddot{\kappa} - \ide)^2$. Combining this for all $v,w$ we obtain the following decomposition
\[
\ddot{X} = \ker(\ddot{\kappa} - \ide)^2 \oplus\ima(\ddot{\kappa} - \ide)^2,
\]
Each of these generalized subspaces is stable under any operator commuting with $\ddot{\kappa}$, for instance, $\ddot{b}$, $\ddot{d}$, $\ddot{d}\sR$ and $\ddot{B}$.

\subsection{The harmonic projection and the Green operator}
Let $P$ be the harmonic projection operator, which is the identity map on $\ker(\ddot{\kappa} - \ide)^2$ and the zero map on $\ima(\ddot{\kappa} - \ide)^2$. Thus we have
\[
\ddot{X} = P(\ddot{X})\oplus P^{\perp}(\ddot{X}),
\]
where $P^{\perp} := \ide-P$. It is clear that $(P(\ddot{X}),\ddot{b},\ddot{d},\ddot{B})$ and $(P^{\perp}(\ddot{X}),\ddot{b},\ddot{d},\ddot{B})$ are double mixed subcomplexes of $(\ddot{X},\ddot{b},\ddot{d},\ddot{B})$. On $P^{\perp}(\ddot{X})$ the operator
$$
\ide - \ddot{\kappa} = \ddot{d} \xcirc \ddot{d}\sR + \ddot{d}\sR \xcirc \ddot{d}
$$
is both invertible and homotopic to zero with respect to either differential $\ddot{d}$ and $\ddot{d}\sR$. Hence the complexes $(P^{\perp}(\ddot{X}),\ddot{d})$ and$(P^{\perp}(\ddot{X}),\ddot{d}\sR)$ are acyclic. Let
\[
P(\breve{X}_n) = \bigoplus_{w=0}^n P(\ddot{X}_{n-w,w})\qquad\text{and}\qquad P^{\perp}(\breve{X}_n) = \bigoplus_{w=0}^n P^{\perp}(\ddot{X}_{n-w,w}).
\]
The same argument shows that $(P^{\perp}(\breve{X}),\breve{b})$ and $(P^{\perp}(\breve{X}),\breve{d}\sdR)$ are also acyclic. The Green operator $G\colon \ddot{X}\to \ddot{X}$ is defined to be zero on $P(\ddot{X})$ and the inverse of $\ide-\ddot{\kappa}$ on $P^{\perp}(\ddot{X})$. It is clear that
\begin{equation}
G \xcirc P = P\xcirc G = 0\qquad\text{and}\qquad P^{\perp} = G\xcirc (\ide-\ddot{\kappa}) = G\xcirc (\ddot{d} \xcirc \ddot{d}\sdR + \ddot{d}\sdR \xcirc \ddot{d}).\label{*eq8}
\end{equation}
Moreover $P$ and $G$ commute with each operator that commutes with $\ddot{\kappa}$.

\begin{proposition}\label{pr4.1} The following equality holds:
\[
P^{\perp}(\ddot{X}) = \ddot{d}\xcirc P^{\perp}(\ddot{X}) \oplus \ddot{d}\sR\xcirc P^{\perp}(\ddot{X}).
\]
Furthermore $\ddot{d}\sR$ maps $\ddot{d}\xcirc P^{\perp}(\ddot{X})$ isomorphically onto $\ddot{d}\sR\xcirc P^{\perp}(\ddot{X})$ with inverse $G\xcirc \ddot{d}$ and $\ddot{d}$ maps $\ddot{d}\sR\xcirc P^{\perp}(\ddot{X})$ isomorphically onto $\ddot{d}\xcirc P^{\perp}(\ddot{X})$ with inverse $G\xcirc \ddot{d}\sR$.
\end{proposition}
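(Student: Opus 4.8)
The plan is to derive everything from three ingredients that are already available: the homotopy identity $\ide - \ddot{\kappa} = \ddot{d}\xcirc\ddot{d}\sR + \ddot{d}\sR\xcirc\ddot{d}$ from~\eqref{*eq7}, the formula $P^{\perp} = G\xcirc(\ddot{d}\xcirc\ddot{d}\sR + \ddot{d}\sR\xcirc\ddot{d})$ from~\eqref{*eq8}, and the facts that $\ddot{d}^2 = 0$, $(\ddot{d}\sR)^2 = 0$ (immediate from $\ddot{d}\sR(\bx,\byy) = (0,\bx)$), that $G$ commutes with $\ddot{d}$ and $\ddot{d}\sR$ (since both of these commute with $\ddot{\kappa}$), and that $\ima(G)\subseteq P^{\perp}(\ddot{X})$. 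Using that $G$ commutes with $\ddot{d}$ and $\ddot{d}\sR$, the second formula can be rewritten as $P^{\perp} = \ddot{d}\xcirc G\xcirc\ddot{d}\sR + \ddot{d}\sR\xcirc G\xcirc\ddot{d}$, and all the work is then carried out inside $P^{\perp}(\ddot{X})$, on which $P^{\perp}$ restricts to the identity.

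First I would prove the decomposition. For $x\in P^{\perp}(\ddot{X})$ the identity $x = P^{\perp}(x) = \ddot{d}\bigl(G\xcirc\ddot{d}\sR(x)\bigr) + \ddot{d}\sR\bigl(G\xcirc\ddot{d}(x)\bigr)$, together with $G\xcirc\ddot{d}\sR(x),\,G\xcirc\ddot{d}(x)\in P^{\perp}(\ddot{X})$, gives $P^{\perp}(\ddot{X}) = \ddot{d}\xcirc P^{\perp}(\ddot{X}) + \ddot{d}\sR\xcirc P^{\perp}(\ddot{X})$. For directness, if $z = \ddot{d}(a) = \ddot{d}\sR(b)$ with $a,b\in P^{\perp}(\ddot{X})$, then $\ddot{d}(z) = \ddot{d}^2(a) = 0$ and $\ddot{d}\sR(z) = (\ddot{d}\sR)^2(b) = 0$, so the expansion $z = P^{\perp}(z) = G\xcirc\ddot{d}\xcirc\ddot{d}\sR(z) + G\xcirc\ddot{d}\sR\xcirc\ddot{d}(z)$ forces $z = 0$.

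Next I would check the isomorphism statements. Since $\ddot{d}\xcirc P^{\perp}(\ddot{X})\subseteq P^{\perp}(\ddot{X})$, the map $\ddot{d}\sR$ sends $\ddot{d}\xcirc P^{\perp}(\ddot{X})$ into $\ddot{d}\sR\xcirc P^{\perp}(\ddot{X})$, and $G\xcirc\ddot{d}$ sends it back. For $y = \ddot{d}(a)$ one has $\ddot{d}(y) = 0$, hence $y = P^{\perp}(y) = G\xcirc\ddot{d}\xcirc\ddot{d}\sR(y)$, that is, $(G\xcirc\ddot{d})\xcirc\ddot{d}\sR = \ide$ on $\ddot{d}\xcirc P^{\perp}(\ddot{X})$; dually, for $z = \ddot{d}\sR(b)$ one has $\ddot{d}\sR(z) = 0$, and commuting $G$ past $\ddot{d}\sR$ gives $z = G\xcirc\ddot{d}\sR\xcirc\ddot{d}(z) = \ddot{d}\sR\xcirc(G\xcirc\ddot{d})(z)$, that is, $\ddot{d}\sR\xcirc(G\xcirc\ddot{d}) = \ide$ on $\ddot{d}\sR\xcirc P^{\perp}(\ddot{X})$. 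Thus $\ddot{d}\sR$ and $G\xcirc\ddot{d}$ are mutually inverse between the two summands, and interchanging the roles of $\ddot{d}$ and $\ddot{d}\sR$ yields the analogous statement for $\ddot{d}$ and $G\xcirc\ddot{d}\sR$. The computations are all short; the only points requiring care are keeping track of which operators commute with $\ddot{\kappa}$ and the use of $\ima(G)\subseteq P^{\perp}(\ddot{X})$, which is what pins the two summands down precisely as $\ddot{d}\xcirc P^{\perp}(\ddot{X})$ and $\ddot{d}\sR\xcirc P^{\perp}(\ddot{X})$ rather than something larger. I expect no genuine obstacle here: this is the harmonic-decomposition argument of~\cite{C-Q2} transported to the present complex.
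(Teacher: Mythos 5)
Your argument is correct, and it is exactly the Cuntz--Quillen harmonic-decomposition argument that the paper invokes by citing \cite{C-Q2}*{Proposition~2.1}: it rests on $P^{\perp}=G\xcirc(\ddot d\xcirc\ddot d\sR+\ddot d\sR\xcirc\ddot d)$, the commutation of $G$ with $\ddot d$ and $\ddot d\sR$, and $\ddot d^2=(\ddot d\sR)^2=0$, all of which are correctly sourced from the text. There is no gap and no divergence from the intended proof.
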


\begin{proof} The proof of \cite{C-Q2}*{Proposition~2.1} works in our setting.
\end{proof}

The above proposition gives a new proof that $(P^{\perp}(\ddot{X}),\ddot{d})$ and $(P^{\perp}(\ddot{X}),\ddot{d}\sR)$ are acyclic.

\begin{proposition}\label{pr4.2} An element $\bx\in \ddot{X}_{vw}$ belongs to $P(\ddot{X}_{vw})$ if and only if $\ddot{d}\sR(\bx)$ and $\ddot{d}\sR \xcirc \ddot{d}(\bx)$ are $\ddot{\kappa}$-invariant.
\end{proposition}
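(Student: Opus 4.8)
The plan is to derive the statement from the two structural facts already available: the identity $\ide-\ddot{\kappa}=\ddot{d}\xcirc\ddot{d}\sR+\ddot{d}\sR\xcirc\ddot{d}$ of~\eqref{*eq7}, together with the fact that $\ddot{\kappa}$ commutes with both $\ddot{d}$ and $\ddot{d}\sR$, and the description of the harmonic decomposition, by which $\bx\in P(\ddot{X}_{vw})$ if and only if $(\ide-\ddot{\kappa})^2(\bx)=0$. The two implications are of very unequal difficulty: one is a formal operator computation, while the other requires unwinding the explicit formulas once.

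For ``$\Leftarrow$'' I would argue as follows. Assume $\ddot{d}\sR(\bx)$ and $\ddot{d}\sR\xcirc\ddot{d}(\bx)$ are $\ddot{\kappa}$-invariant, i.e.\ annihilated by $\ide-\ddot{\kappa}$. Then, by~\eqref{*eq7},
\[
(\ide-\ddot{\kappa})^2(\bx)=(\ide-\ddot{\kappa})\bigl(\ddot{d}(\ddot{d}\sdR(\bx))\bigr)+(\ide-\ddot{\kappa})\bigl(\ddot{d}\sdR(\ddot{d}(\bx))\bigr).
\]
Since $\ddot{\kappa}$ commutes with $\ddot{d}$, the first term equals $\ddot{d}\bigl((\ide-\ddot{\kappa})\ddot{d}\sR(\bx)\bigr)$, which is $0$ by the first hypothesis, while the second term vanishes directly by the second hypothesis. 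Hence $(\ide-\ddot{\kappa})^2(\bx)=0$, i.e.\ $\bx\in P(\ddot{X}_{vw})$. I expect no difficulty in this direction, as it uses nothing beyond~\eqref{*eq7} and the commutation relations.

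For ``$\Rightarrow$'', suppose $(\ide-\ddot{\kappa})^2(\bx)=0$. Commuting $\ide-\ddot{\kappa}$ past $\ddot{d}$ and $\ddot{d}\sR$ gives $(\ide-\ddot{\kappa})^2\ddot{d}\sR(\bx)=\ddot{d}\sR(\ide-\ddot{\kappa})^2(\bx)=0$ and, in the same way, $(\ide-\ddot{\kappa})^2\ddot{d}\sR\xcirc\ddot{d}(\bx)=0$; hence both $\ddot{d}\sR(\bx)$ and $\ddot{d}\sR\xcirc\ddot{d}(\bx)$ lie in $\ker(\ide-\ddot{\kappa})^2\cap\ima(\ddot{d}\sR)$. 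It then remains to show that $\ddot{\kappa}$ acts semisimply on $\ima(\ddot{d}\sR)$, so that there $\ker(\ide-\ddot{\kappa})^2=\ker(\ide-\ddot{\kappa})$; granting this and applying it to the two elements above finishes the proof. To obtain the semisimplicity I would unwind the definitions: $\ddot{d}\sR(\bx,\byy)=(0,\bx)$ and $\ddot{\kappa}(0,\bz)=(0,t(\bz))$, so $\ima(\ddot{d}\sR)$ is $\ddot{\kappa}$-stable and, up to the obvious identification, $\ddot{\kappa}$ restricted to it is the cyclic operator $t$ acting on the spaces $X_{vr}$. Now $t^{r+1}=\ide$ on $X_{vr}$, because $N\xcirc(\ide-t)=\ide-t^{r+1}$ there while $\ima(\ide-t)\subseteq\ker(N)$ by~\eqref{eq1}; since $\mathds{Q}\subseteq k$, the element $\frac{N}{r+1}$ is then an idempotent, which together with~\eqref{eq1} forces $\ker(\ide-t)^2=\ker(\ide-t)$ on $X_{vr}$ — this is the same phenomenon that makes the rows of $\mathcal{X}$ contractible in Lemma~\ref{le3.3}.

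The main obstacle is precisely this final semisimplicity step in the ``$\Rightarrow$'' direction: the purely formal part only places $\ddot{d}\sR(\bx)$ and $\ddot{d}\sR\xcirc\ddot{d}(\bx)$ in the \emph{generalized} nullspace of $\ide-\ddot{\kappa}$, and upgrading this to genuine $\ddot{\kappa}$-invariance is where the finite cyclic symmetry $t^{r+1}=\ide$ and the hypothesis $\mathds{Q}\subseteq k$ really intervene.
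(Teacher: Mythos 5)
Your proposal is correct and reproduces, in the present setting, exactly the argument of Cuntz--Quillen that the paper invokes: the ``$\Leftarrow$'' direction is the formal computation from $\ide-\ddot{\kappa}=\ddot{d}\ddot{d}\sR+\ddot{d}\sR\ddot{d}$ and the commutation of $\ddot{\kappa}$ with $\ddot{d}$, $\ddot{d}\sR$, while the ``$\Rightarrow$'' direction hinges on $\ddot{\kappa}$ having finite order on $\ima(\ddot{d}\sR)=\acute{X}$ (the paper itself records $\ddot{\kappa}^{w+1}=\ide$ on $\acute{X}_{vw}$ for this purpose), which in characteristic zero forces $\ker(\ide-\ddot{\kappa})^2=\ker(\ide-\ddot{\kappa})$ there. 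Both steps are sound, and your identification of the semisimplicity on $\acute{X}$ as the only genuinely non-formal input is accurate; the detour through $N\xcirc(\ide-t)=\ide-t^{r+1}$ and \eqref{eq1} to get $t^{r+1}=\ide$ is slightly roundabout (it is immediate from the combinatorial definition of $t$), but perfectly valid.
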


\begin{proof} The proof of \cite{C-Q2}*{Proposition~2.2} works in our setting.
\end{proof}

For $v,w\ge 0$ let $\grave{X}_{vw}$ and $\acute{X}_{vw}$ be the image of the canonical inclusions of $X_{vw}$ into $\ddot{X}_{vw}$ and $\ddot{X}_{v,w+1}$ respectively, and let $\acute{\kappa}\colon \acute{X}_{vw}\longrightarrow \acute{X}_{vw}$ be the map induced by $\ddot{\kappa}$. For $\bx = (\bx_0,0)\in \grave{X}_{vw}$ we write
\[
\grave{b}(\bx) := (b(\bx_0),0) \qquad\text{and}\qquad \grave{d}(\bx) := (d(\bx_0),0),
\]
and, for $\byy = (0,\byy_0)\in \acute{X}_{vw}$, we write
\[
\acute{b}(\byy) := (0,b(\byy_0)),\qquad \acute{d}'(\byy) := (0,d'(\byy_0))\qquad\text{and}\qquad \acute{t}(\byy) := (0,t(\byy_0)).
\]
It is obvious that $\acute{\kappa}$ coincides with $\acute{t}$. Note that $\ddot{\kappa}$ has finite order on $\ddot{d}\sR(\ddot{X}) = \acute{X}$ in each degree. In fact $\ddot{\kappa}^{w+1} = \ide$ on $\acute{X}_{vw}$. By the discussion in the page~86 of \cite{C-Q2},
\begin{align}
& \acute{X}_{vw} = \ker(\ide-\acute{\kappa})\oplus \ima(\ide-\acute{\kappa}),\label{*eq9}\\
& P(\ddot{d}\sdR(\grave{X}_{vw})) = P(\acute{X}_{vw})=\ker(\ide-\acute{\kappa}),\\
& P^{\perp}(\ddot{d}\sdR(\grave{X}_{vw}))=P^{\perp}(\acute{X}_{vw})= \ima(\ide-\acute{\kappa}),
\end{align}
and the maps $P_{\acute{X}_{vw}}$ and $G_{\acute{X}_{vw}}$, defined as the projection onto $\ker(\ide-\acute{\kappa})$ associated with the decomposition~\eqref{*eq9} and the Green operator for $\ide-\acute{\kappa}\colon \acute{X}_{vw}\longrightarrow \acute{X}_{vw}$, respectively, satisfy:
\begin{equation}
P_{\acute{X}_{vw}} = \frac{1}{w+1} \sum_{i=0}^w \acute{\kappa}^i\qquad\text{and}\qquad G_{\acute{X}_{vw}} = \frac{1}{w+1} \sum_{i=0}^w \left(\frac{w}{2}-i\right)\acute{\kappa}^i.\label{eq10}
\end{equation}
Consequently, for all $\bx\in \ddot{X}_{vw}$,
\begin{align}
& P\xcirc \ddot{d}\sdR(\bx) = \frac{1}{w+1}\sum_{i=0}^w \ddot{\kappa}^i\xcirc \ddot{d}\sdR(\bx) = \frac{1}{w+1} \ddot{B}(\bx)\label{eq11}\\
\shortintertext{and}
& G\xcirc \ddot{d}\sdR(\bx) = \frac{1}{w+1}\sum_{i=0}^w \left(\frac{w}{2}-i\right) \ddot{\kappa}^i\xcirc \ddot{d}\sdR(\bx).\label{eq12}
\end{align}
Formula~\eqref{eq11} implies that
\begin{equation}
\ddot{B}(P^{\perp}(\ddot{X}))=0\label{eq13}
\end{equation}
and
$$
\ddot{B}(\bx) = (w+1) \ddot{d}\sdR(P(\bx))\qquad\text{for all $\bx\in\ddot{X}_{vw}$.}
$$
So, since $(P^{\perp}(\ddot{X}),\ddot{d}\sR)$ is acyclic,
\begin{equation}
H_*(P(\ddot{X}),\ddot{B}) = H_*(P(\ddot{X}),\ddot{d}\sdR) = H_*(\ddot{X},\ddot{d}\sdR) = 0.\label{eq14}
\end{equation}
In the terminology of \cite{C-Q2} this says that $(P(\ddot{X}),\ddot{d},\ddot{b}, \ddot{B})$ is $\ddot{B}$-acyclic. Lastly,~\eqref{eq10} combined with~\eqref{eq12} and the second formula of~\eqref{*eq8}, allows us to obtain an explicit formula for $P$. In fact, for $\bx\in \acute{X}_{vw}$, this is given by the first equality in~\eqref{eq10}. Hence, assume $\bx\in\grave{X}_{vw}$. Since by~\eqref{eq12} we know that $G \xcirc \ddot{d}\sdR(\bx)\in \acute{X}_{vw}$, we have:
\[
G\xcirc \ddot{d}\xcirc \ddot{d}\sdR(\bx) = \ddot{d}\xcirc G\xcirc \ddot{d}\sdR(\bx) = -\acute{d}' \xcirc G \xcirc \ddot{d}\sdR(\bx) + \sw\xcirc (\ide - \acute{t})\xcirc G \xcirc \ddot{d}\sdR(\bx),
\]
where $\sw\colon \acute{X}_{vw}\longrightarrow \grave{X}_{vw}$ is the map defined by $\sw(0,\bx) := (\bx,0)$. Using this, the second equalities in~\eqref{*eq8} and~\eqref{eq10}, and the fact that $\acute{t}\xcirc \ddot{\kappa}^i \xcirc \ddot{d}\sR(\bx) = \ddot{\kappa}^{i+1}\xcirc \ddot{d}\sR(\bx)$, we obtain:

\begin{enumerate}

\item If $w=0$, then $P(\bx) = \bx$.

\smallskip

\item If $w>0$, then

\begin{align*}
P(\bx) & =  \bx -  G\xcirc \ddot{d}\sdR\xcirc \grave{d}(\bx) - G\xcirc \ddot{d}\xcirc \ddot{d}\sdR(\bx)\\
& = \bx - \frac{1}{w}\sum_{i=0}^{w-1} \left(\frac{w-1}{2}-i\right) \ddot{\kappa}^i\xcirc \ddot{d}\sdR \xcirc \grave{d}(\bx)\\
& + \frac{1}{w+1}\sum_{i=0}^w \left(\frac{w}{2}-i\right) \acute{d}'\xcirc \ddot{\kappa}^i \xcirc \ddot{d}\sdR(\bx)\\
& - \frac{1}{w+1}\sum_{i=0}^w \left(\frac{w}{2}-i\right)\sw\xcirc(\ide-\acute{t})\xcirc \ddot{\kappa}^i \xcirc \ddot{d}\sdR(\bx)\displaybreak[0] \\
& = \frac{1}{w+1}\sw\xcirc\ddot{B}_n(\bx) - \frac{1}{w}\sum_{i=0}^{w-1} \left(\frac{w-1}{2}-i\right) \ddot{\kappa}^i\xcirc \ddot{d}\sdR \xcirc \grave{d}(\bx)\\
&+ \frac{1}{w+1}\sum_{i=0}^w \left(\frac{w}{2}-i\right) \acute{d}'\xcirc \ddot{\kappa}^i \xcirc \ddot{d}\sdR(\bx).
\end{align*}

\smallskip

\end{enumerate}
Summarizing,
\begin{align}
P(0,\byy) & = \frac{1}{w}(0,N(\byy)) && \qquad\text{for $(0,\byy)\in \ddot{X}_{vw}$ with $w>0$,}\label{e1}\\
P(\bx,0) &= (\bx,0) && \qquad\text{for $(\bx,0)\in \ddot{X}_{v0}$,}\label{e2}\\
P(\bx,0) & = \frac{1}{w+1}\bigl(N(\bx),0\bigr)\nonumber\\
& - \frac{1}{w}\sum_{i=0}^{w-1} \left(\frac{w-1}{2}-i\right) \bigl(0,t^i(d(\bx))\bigr) && \qquad\text{for $(\bx,0)\in \ddot{X}_{vw}$ with $w>0$.}\label{e3}\\
& + \frac{1}{w+1}\sum_{i=0}^w \left(\frac{w}{2}-i\right) \bigl(0,d'(t^i(\bx))\bigr).\nonumber
\end{align}

\begin{remark}\label{info sobre P(X)} Take $\bx := (\bx_0,\bx_1)\in \ddot{X}_{vw}$, with $\bx_0\in X_{vw}$ and $\bx_1\in X_{v,w-1}$. By Proposition~\ref{pr4.2} we know that $\bx\in P(\ddot{X})$ if and only if $\bx_0$ and $d(\bx_0)+(\ide-t)(\bx_1)$ are $t$-invariant. From this it follows immediately that if $\bx\in P(\ddot{X})$, then $(\bx_0,\bx'_1)\in P(\ddot{X})$ for all $\bx'_1\in X_{v,w-1}$ such that $\bx'_1-\bx_1$ is a $t$-invariant element. Conversely, if $\bx$ and $(\bx_0,\bx'_1)$ belong to $P(\ddot{X})$, then $\bz:=(\ide-t)(\bx'_1 - \bx_1)$ is $t$-invariant, but this implies that
$$
w\bz = N(\bz) = N\xcirc (\ide-t)(\bx'_1 - \bx_1) = 0.
$$
In other words, that $\bx'_1-\bx_1$ is $t$-invariant (note that if $w=0$, then $\bx'_1=\bx_1=0$). Let ${}^t\!P(\ddot{X}_{vw})$ be the set of all elements of the shape $(0,\bx_1)\in \ddot{X}_{vw}$ with $\bx_1$ a $t$-invariant element. By the previous computations
$$
{}^t\!P(\ddot{X}_{vw}) = P(\ddot{X}_{vw})\cap \acute{X}_{v,w-1}.
$$
It is evident that $({}^t\!P(\ddot{X}),-\acute{b},-\acute{d}')$ is a subcomplex of $(P(\ddot{X}),\ddot{b},\ddot{d})$.
\end{remark}

\begin{proposition}\label{pr4.3} Assume that $w>0$. Then
$$
P(t(\bx),0) = P(\bx,0)+ \frac{1}{w} (0,N\xcirc \varrho_n(\bx)) - \frac{1}{w(w+1)} (0,N\xcirc d(\bx))\quad\text{and} \quad P(0,t(\byy)) = P(0,\byy)
$$
for all $(\bx,0),(0,\byy)\in \ddot{X}_{vw}$.
\end{proposition}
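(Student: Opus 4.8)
The statement is a pair of identities about how the harmonic projection $P$ interacts with the cyclic operator $t$, and the natural strategy is to feed the explicit formulas \eqref{e1}, \eqref{e2}, \eqref{e3} for $P$ into both sides and reduce everything to identities among the operators $t$, $N$, $d$, $d'$ and $\varrho_n$ on $X_{vw}$. The second identity $P(0,t(\byy)) = P(0,\byy)$ should fall out immediately: by \eqref{e1}, for $w>0$ we have $P(0,\byy) = \tfrac1w(0,N(\byy))$, and since $N\xcirc t = N$ (a consequence of $\ima(N) = \ker(\ide - t)$, recorded in \eqref{eq1}, applied after $N$, or more directly from $N\xcirc(\ide - t)=0$), we get $P(0,t(\byy)) = \tfrac1w(0,N(t(\byy))) = \tfrac1w(0,N(\byy)) = P(0,\byy)$. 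So the real content is the first identity.

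\textbf{The first identity.} I would compute $P(t(\bx),0) - P(\bx,0)$ using \eqref{e3} applied to both $t(\bx)$ and $\bx$. The first summand $\tfrac{1}{w+1}(N(\bx),0)$ contributes $\tfrac{1}{w+1}(N(t(\bx)) - N(\bx),0) = (0,0)$ again by $N\xcirc t = N$. The remaining two summands of \eqref{e3} are combinations of $(0, t^i(d(\bx)))$ and $(0, d'(t^i(\bx)))$ with coefficients $-\tfrac1w(\tfrac{w-1}{2}-i)$ and $\tfrac{1}{w+1}(\tfrac w2 - i)$. I would substitute $\bx \mapsto t(\bx)$, use the commutation relations from Theorem~\ref{th3.1} — in particular $t\xcirc b = b\xcirc t$ is not what I need, but I do need $d\xcirc t$ and $d'\xcirc t$ expressed via the decomposition $d = d' + \varrho_n$ together with the relation $d\xcirc(\ide - t) = (\ide - t)\xcirc d'$ — and then reindex the telescoping sums. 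The relation $d\xcirc(\ide-t) = (\ide-t)\xcirc d'$ rearranges to $d\xcirc t = d - (\ide - t)\xcirc d' = d' + \varrho_n - d' + t\xcirc d' = \varrho_n + t\xcirc d'$, i.e. $d(t(\bx)) = \varrho_n(\bx) + t(d'(\bx))$; similarly I expect a companion identity for $d'\xcirc t$. Plugging these in and shifting the index $i \mapsto i-1$ in the appropriate sums, the bulk of the terms should cancel in pairs, leaving exactly the boundary contributions $\tfrac1w(0, N\xcirc\varrho_n(\bx))$ and $-\tfrac{1}{w(w+1)}(0, N\xcirc d(\bx))$.

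\textbf{Organization and the main obstacle.} Concretely I would (1) dispose of the $w>0$ versus $w=0$ bookkeeping — for $w=0$ both sides are trivial since $t=0$ and $N=\ide$ there and the correction terms vanish, though one should check \eqref{e2} is consistent; actually the statement assumes $w>0$, so this is moot — (2) establish the auxiliary operator identities $d\xcirc t = \varrho_n + t\xcirc d'$ and the analogue for $d'\xcirc t$, plus $N\xcirc t = N$ and $N\xcirc d' = d\xcirc N$ (the latter from Theorem~\ref{th3.1}), (3) write out $P(t(\bx),0)$ via \eqref{e3}, substitute, and (4) collect terms against $P(\bx,0)$. The main obstacle I anticipate is purely bookkeeping: the two sums in \eqref{e3} have different ranges ($0$ to $w-1$ versus $0$ to $w$) and different normalizations ($\tfrac1w$ versus $\tfrac1{w+1}$), so after the index shift the "edge" terms $i=0$ and $i=w-1$ (resp. $i=w$) do not cancel against the generic terms and must be handled separately; tracking the coefficients $(\tfrac{w-1}{2}-i)$ and $(\tfrac w2 - i)$ through the reindexing without sign or off-by-one errors is where care is needed. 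I expect these leftover edge terms, together with $\varrho_n$-contributions coming from $d\xcirc t = \varrho_n + t\xcirc d'$, to reassemble precisely into $\tfrac1w N\xcirc\varrho_n(\bx) - \tfrac{1}{w(w+1)} N\xcirc d(\bx)$ in the second coordinate, with zero in the first coordinate.
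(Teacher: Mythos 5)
Your proof of the second identity, $P(0,t(\byy)) = P(0,\byy)$, coincides with the paper's: apply \eqref{e1} and the relation $N\xcirc t = N$.

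For the first identity your plan diverges from the paper in a way that matters. You propose to compute $P(t(\bx),0) - P(\bx,0)$ entirely by substitution into \eqref{e3}, reindexing, and telescoping, and you anticipate that the edge terms will eventually reassemble into the target. The paper does start this way (it cancels the $\frac{1}{w+1}(N(\bx),0)$ piece via $N\xcirc t=N$, replaces $d'$ by $d-\varrho_n$ in the second sum, and uses $d\xcirc(\ide-t)=(\ide-t)\xcirc d'$), but the calculation does \emph{not} close up by reindexing alone: after the partial simplification the difference has the form
\[
P(t(\bx),0) - P(\bx,0) = \bigl(0,(t-\ide)(\bx')\bigr) + \bigl(0,\varrho_n(\bx)\bigr) - \tfrac{1}{w+1}\bigl(0,\varrho_n\xcirc N(\bx)\bigr)
\]
for an explicit but inconvenient $\bx'\in X_{v,w-1}$, and the $(t-\ide)(\bx')$ term does not disappear from any further index shift. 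The paper then invokes Remark~\ref{info sobre P(X)}: since $P(t(\bx),0)$ and $P(\bx,0)$ lie in $P(\ddot X)$ with equal first coordinate, the difference of their second coordinates is forced to be $t$-invariant. Applying $\frac{1}{w}N$ to the displayed expression kills $(t-\ide)(\bx')$ (because $N\xcirc(t-\ide)=0$) and, after the auxiliary identity $N\xcirc\varrho_n\xcirc N = N\xcirc d$, produces exactly $\frac{1}{w}N\xcirc\varrho_n(\bx) - \frac{1}{w(w+1)}N\xcirc d(\bx)$. This $t$-invariance argument is the crux of the proof, and it is precisely what resolves the ``main obstacle'' you flag; without it, the bookkeeping you describe would leave you staring at a residual $(t-\ide)(\bx')$ that you cannot eliminate by pure reindexing. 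Your plan is therefore incomplete as stated. (Minor points: the relation you cite as ``$N\xcirc d' = d\xcirc N$'' should be $d'\xcirc N = N\xcirc d$; and your derived identity $d\xcirc t = \varrho_n + t\xcirc d'$ is correct and is essentially the form the paper uses.)
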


\begin{proof} By equality~\eqref{e2}, for $(0,\byy)\in \ddot{X}_{vw}$ we have
$$
P(0,\byy) = \frac{1}{w}(0,N(\byy)) = \frac{1}{w}(0,N(t(\byy))) = P(0,t(\byy)).
$$
Next we obtain the formula for $P(t(\bx),0)$. By equality~\eqref{e3},
\begin{align*}
P(t(\bx),0)- P(\bx,0) & = \frac{1}{w}\sum_{i=0}^{w-1} \left(\frac{w-1}{2}-i\right) \bigl(0,(t^i-t^{i+1})\xcirc d(\bx)\bigr)\\
& + \frac{1}{w+1}\sum_{i=0}^w \left(\frac{w}{2}-i\right) \bigl(0,d'\xcirc (t^{i+1}-t^i)(\bx)\bigr)\\
& = \frac{1}{w}\sum_{i=0}^{w-1} \left(\frac{w-1}{2}-i\right) \bigl(0,(t^i-t^{i+1})\xcirc d(\bx)\bigr)\\
& + \frac{1}{w+1}\sum_{i=0}^w \left(\frac{w}{2}-i\right) \bigl(0,d\xcirc (t^{i+1}-t^i)(\bx)\bigr)\\
& - \frac{1}{w+1}\sum_{i=0}^w \left(\frac{w}{2}-i\right) \bigl(0,\varrho_n\xcirc (t^{i+1}-t^i)(\bx)\bigr)\\
& = \frac{1}{w}\sum_{i=0}^{w-1} \left(\frac{w-1}{2}-i\right) \bigl(0,(\ide-t)\xcirc t^i \xcirc d(\bx)\bigr)\\
& + \frac{1}{w+1}\sum_{i=0}^w \left(\frac{w}{2}-i\right) \bigl(0,(t-\ide)\xcirc d'\xcirc t^i(\bx)\bigr)\\
& + \bigl(0,\varrho_n(\bx)\bigr) - \frac{1}{w+1}\bigl(0,\varrho_n\xcirc N(\bx)\bigr),
\end{align*}
because
$$
\sum_{i=0}^w \left(\frac{w}{2}-i\right) (t^{i+1}-t^i) = -(w+1) t^0 + N.
$$
Hence
$$
P(t(\bx),0)- P(\bx,0) = \bigl(0,(t-\ide)(\bx')\bigr)+\bigl(0,\varrho_n(\bx)\bigr) - \frac{1}{w+1} \bigl(0,\varrho_n\xcirc N(\bx)\bigr),
$$
for some $\bx'\in X_{v,w-1}$. But by Remark~\ref{info sobre P(X)} we know that
$$
(t-\ide)(\bx') + \varrho_n(\bx) - \frac{1}{w+1} \varrho_n\xcirc N(\bx)
$$
is $t$-invariant, and so
\begin{align*}
(t-\ide)(\bx') + \varrho_n(\bx) - \frac{1}{w+1} \varrho_n\xcirc N(\bx) & = \frac{1}{w}N\xcirc \varrho_n(\bx) - \frac{1}{w(w+1)} N\xcirc\varrho_n\xcirc N(\bx)\\
& = \frac{1}{w}N\xcirc \varrho_n(\bx) - \frac{1}{w(w+1)} N\xcirc d(\bx).
\end{align*}
The desired formula for $P(t(\bx),0)$ follows immediately from this fact.
\end{proof}

\begin{proposition}\label{pr4.3'} For $1\le i\le w$ and $(\bx,0)\in \ddot{X}_{vw}$, we have
\begin{equation}\label{*eq10}
P(t^i(\bx),0) = P(\bx,0) + \frac{1}{w}\sum_{j=0}^{i-1} (0,N\xcirc \varrho_n\xcirc t^j(\bx)) - \frac{i}{w(w+1)} (0,N\xcirc d(\bx)).
\end{equation}
\end{proposition}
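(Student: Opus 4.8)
The plan is to argue by induction on $i$, feeding Proposition~\ref{pr4.3} into itself. The one fact I would establish first is the identity
\[
N\xcirc d\xcirc t = N\xcirc d \qquad\text{on } X_{vw},
\]
from which $N\xcirc d\xcirc t^j = N\xcirc d$ for every $j\ge 0$ follows by iteration. To prove it, note that on $X_{v,w-1}$ one has $N = \sum_{l=0}^{w-1}t^l$ and $t^w = \ide$ (the order of $t$ on $X_{v,w-1}$ divides $w$, as already observed in this section in the derivation of the polynomial equation for $\ddot{\kappa}$), so $N\xcirc(\ide - t) = \ide - t^w = 0$ there. Composing the relation $d\xcirc(\ide - t) = (\ide - t)\xcirc d'$ (one of the equalities collected after Theorem~\ref{th3.1}) on the left with $N$ then gives $N\xcirc d\xcirc(\ide - t) = N\xcirc(\ide - t)\xcirc d' = 0$, i.e.\ $N\xcirc d = N\xcirc d\xcirc t$.

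For $i = 1$, formula~\eqref{*eq10} is precisely the first equality of Proposition~\ref{pr4.3}, since $\varrho_n\xcirc t^0 = \varrho_n$ and the sum $\sum_{j=0}^{0}$ has a single term. Assume now that~\eqref{*eq10} holds for some $i$ with $1\le i < w$. Applying Proposition~\ref{pr4.3} with $\bx$ replaced by $t^i(\bx)\in X_{vw}$ gives
\[
P\bigl(t^{i+1}(\bx),0\bigr) = P\bigl(t^i(\bx),0\bigr) + \frac{1}{w}\bigl(0, N\xcirc\varrho_n\xcirc t^i(\bx)\bigr) - \frac{1}{w(w+1)}\bigl(0, N\xcirc d\xcirc t^i(\bx)\bigr).
\]
Substituting the inductive hypothesis for $P(t^i(\bx),0)$ and replacing $N\xcirc d\xcirc t^i(\bx)$ by $N\xcirc d(\bx)$ with the preliminary identity, the term $\frac{1}{w}(0, N\xcirc\varrho_n\xcirc t^i(\bx))$ extends the sum to $\sum_{j=0}^{i}$, and the coefficients $-\frac{i}{w(w+1)}$ and $-\frac{1}{w(w+1)}$ add up to $-\frac{i+1}{w(w+1)}$; this is exactly~\eqref{*eq10} for $i+1$.

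Equivalently, one can package the argument as a telescoping sum, $P(t^i(\bx),0) - P(\bx,0) = \sum_{j=0}^{i-1}\bigl(P(t^{j+1}(\bx),0) - P(t^j(\bx),0)\bigr)$, evaluate each difference by Proposition~\ref{pr4.3} applied to $t^j(\bx)$, and collapse the $N\xcirc d$-contributions using $N\xcirc d\xcirc t^j = N\xcirc d$. The substantive point is the preliminary identity $N\xcirc d\xcirc t = N\xcirc d$ (which rests on $t^w = \ide$ on $X_{v,w-1}$); everything after that is bookkeeping of coefficients, so that is the step I expect to require the most care. It is also worth noting that the hypothesis $1\le i\le w$ is exactly what keeps the induction within range, since each step only ever reaches $i+1\le w$; in particular no identity peculiar to the boundary case, such as $t^{w+1}=\ide$ on $X_{vw}$, is needed.
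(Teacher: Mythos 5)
Your proof is correct and follows essentially the same line as the paper: induction on $i$, applying Proposition~\ref{pr4.3} to $t^i(\bx)$ at each step and then replacing $N\xcirc d\xcirc t^i(\bx)$ by $N\xcirc d(\bx)$ before collapsing the sum. The only divergence is in how that replacement is justified: the paper writes $N\xcirc d\xcirc t^i = d'\xcirc N\xcirc t^i = d'\xcirc N = N\xcirc d$, using $N\xcirc t = N$ on $X_{vw}$ (so, implicitly, $t^{w+1}=\ide$ there), whereas you compose $d\xcirc(\ide-t) = (\ide-t)\xcirc d'$ with $N$ on the left and invoke $N\xcirc(\ide-t)=0$ on $X_{v,w-1}$ (which rests on $t^w=\ide$ there). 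The two routes are parallel rewrites of the same intertwining data, so your closing remark that $t^{w+1}=\ide$ on $X_{vw}$ is not needed is accurate but cosmetic -- you simply trade it for the analogous fact $t^w=\ide$ one row down.
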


\begin{proof} By induction on $i$. Case $i=1$ is Proposition~\ref{pr4.3}. Assume $0<i<w$ and that formula~\eqref{*eq10} is valid for $i$. Then, again by Proposition~\ref{pr4.3},
\begin{align*}
P(t^{i+1}(\bx),0) & = P(t^i(\bx),0)+ \frac{1}{w} (0,N\xcirc \varrho_n\xcirc t^i(\bx)) - \frac{1}{w(w+1)} (0,N\xcirc d\xcirc t^i(\bx))\\
& = P(t^i(\bx),0)+ \frac{1}{w} (0,N\xcirc \varrho_n\xcirc t^i(\bx)) - \frac{1}{w(w+1)} (0,N\xcirc d(\bx)),
\end{align*}
where the last equality follows from the fact that $N\xcirc d\xcirc t^i = d'\xcirc N\xcirc t^i = d'\xcirc N = N\xcirc d$. Consequently, by the inductive hypothesis,
$$
P(t^{i+1}(\bx),0) = P(\bx,0) + \frac{1}{w}\sum_{j=0}^i (0,N\xcirc \varrho_n\xcirc t^j(\bx)) - \frac{i+1}{w(w+1)} (0,N\xcirc d(\bx)),
$$
as desired.
\end{proof}

\begin{corollary}\label{cor4.4} For $w\ge 1$ and $(\bx,0)\in \ddot{X}_{vw}$, we have
$$
P(\bx,0) = \frac{1}{w+1}(N(\bx),0) + \sum_{j=0}^{w-1}\sum_{i=0}^w \frac{2j+2i -2w + 1}{2w(w+1)} \bigl(0,t^j\xcirc \varrho_n\xcirc t^i(\bx)\bigr).
$$
\end{corollary}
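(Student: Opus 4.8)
The plan is to obtain the formula by averaging the identity of Proposition~\ref{pr4.3'} over $i$ and then evaluating $P(N(\bx),0)$ by means of formula~\eqref{e3}.

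First I would add up the identity of Proposition~\ref{pr4.3'} over $i=0,1,\dots,w$ (the case $i=0$ being trivially true). Since $P$ is $k$-linear and $\sum_{i=0}^w t^i(\bx)=N(\bx)$, the sum of the left-hand sides is $P(N(\bx),0)$; on the right, reordering the double sum by $\sum_{i=0}^w\sum_{j=0}^{i-1}a_j=\sum_{j=0}^{w-1}(w-j)a_j$ and using $\sum_{i=0}^w i=\tfrac{w(w+1)}{2}$ gives
\[
(w+1)P(\bx,0)=P(N(\bx),0)-\frac{1}{w}\sum_{j=0}^{w-1}(w-j)\bigl(0,N\xcirc\varrho_n\xcirc t^j(\bx)\bigr)+\frac12\bigl(0,N\xcirc d(\bx)\bigr).
\]

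Second I would compute $P(N(\bx),0)$ from~\eqref{e3} with $\bx$ replaced by $N(\bx)$. The point is that $N(\bx)$ and $N\xcirc d(\bx)$ are $t$-invariant: the first by definition, the second because $N\xcirc d(\bx)$ lies in $\ima N=\ker(\ide-t)$ by~\eqref{eq1}. Hence $N\xcirc N(\bx)=(w+1)N(\bx)$, $t^i\xcirc N\xcirc d(\bx)=N\xcirc d(\bx)$, and $d'\bigl(t^i\xcirc N(\bx)\bigr)=d'(N(\bx))=N(d(\bx))$ (using $d'\xcirc N=N\xcirc d$); combined with $\sum_{i=0}^{w-1}\bigl(\tfrac{w-1}{2}-i\bigr)=0$ and $\sum_{i=0}^{w}\bigl(\tfrac{w}{2}-i\bigr)=0$, all terms of~\eqref{e3} cancel except the one coming from $\varrho_n(N(\bx))=\sum_{l=0}^w\varrho_n\xcirc t^l(\bx)$, so that
\[
P(N(\bx),0)=(N(\bx),0)-\frac{1}{w}\sum_{i=0}^{w-1}\sum_{l=0}^w\Bigl(\tfrac{w-1}{2}-i\Bigr)\bigl(0,t^i\xcirc\varrho_n\xcirc t^l(\bx)\bigr).
\]

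Third I would substitute this into the previous display and rewrite everything in terms of the elements $\bigl(0,t^j\xcirc\varrho_n\xcirc t^i(\bx)\bigr)$, using that $N=\sum_{a=0}^{w-1}t^a$ on $X_{v,w-1}$ together with the identity $N\xcirc d(\bx)=\sum_{j=0}^w N\xcirc\varrho_n\xcirc t^j(\bx)$; the latter follows by iterating Proposition~\ref{pr4.3} $w+1$ times, since $t^{w+1}=\ide$ on $X_{vw}$ and $N\xcirc d\xcirc t^j=d'\xcirc N\xcirc t^j=d'\xcirc N=N\xcirc d$. Collecting the resulting coefficients of $\bigl(0,t^j\xcirc\varrho_n\xcirc t^i(\bx)\bigr)$, and treating separately the cases $0\le i\le w-1$ and $i=w$ (necessary because the middle sum of the first display runs only over $i\le w-1$), one finds that the coefficient equals $\tfrac{2j+2i-2w+1}{2w(w+1)}$ in both cases, while the term $(N(\bx),0)$ contributes $\tfrac{1}{w+1}(N(\bx),0)$; this is the claimed identity.

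The work is entirely computational, and the only point demanding care is the bookkeeping: remembering that $N$ acts as $\sum_{l=0}^w t^l$ on $X_{vw}$ but as $\sum_{l=0}^{w-1}t^l$ on $X_{v,w-1}$, and matching the two different index ranges $0\le j\le w-1$ and $0\le i\le w$ when the various bilinear index sums are combined. Once Proposition~\ref{pr4.3'} and formula~\eqref{e3} are in hand there is no further conceptual obstacle.
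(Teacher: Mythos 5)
Your proof is correct and follows essentially the same route as the paper: sum Proposition~\ref{pr4.3'} over $i=0,\dots,w$ (using $\sum_i t^i=N$) to express $P(\bx,0)$ in terms of $P(N(\bx),0)$, evaluate $P(N(\bx),0)$ via formula~\eqref{e3} exploiting the $t$-invariance of $N(\bx)$ and $N\xcirc d(\bx)$ together with $\sum_{i=0}^w(\tfrac{w}{2}-i)=0$ and $\sum_{i=0}^{w-1}(\tfrac{w-1}{2}-i)=0$, then collect coefficients of $(0,t^j\xcirc\varrho_n\xcirc t^i(\bx))$. Your derivation of the auxiliary identity $N\xcirc d=\sum_{i=0}^w N\xcirc\varrho_n\xcirc t^i$ by iterating Proposition~\ref{pr4.3} and using $t^{w+1}=\ide$ on $X_{vw}$ is a valid substitute for the argument the paper leaves implicit.
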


\begin{proof} By Proposition~\ref{pr4.3'},
\begin{align*}
P(\bx,0) & = \frac{1}{w+1}  P\bigl(N(\bx),0\bigr) - \frac{1}{(w+1)w}\sum_{i=0}^w (w-i) \bigl(0,N\xcirc \varrho_n\xcirc t^i(\bx)\bigr) + \frac{1}{2(w+1)} \bigl(0,N\xcirc d(\bx)\bigr)\\
& = \frac{1}{w+1}  P(N(\bx),0) + \sum_{i=0}^w \frac{2i - w}{2w(w+1)} \bigl(0,N\xcirc \varrho_n\xcirc t^i(\bx)\bigr).
\end{align*}
On the other hand, by formula~\eqref{e3}
\begin{align*}
\frac{1}{w+1} P(N(\bx),0) & = \frac{1}{w+1}\bigl(N(\bx),0\bigr) - \frac{1}{(w+1)w}\sum_{j=0}^{w-1} \left(\frac{w-1}{2}-j\right) \bigl(0,t^j\xcirc d\xcirc N(\bx)\bigr)\\
& + \frac{1}{(w+1)^2}\sum_{j=0}^w \left(\frac{w}{2}-j\right) \bigl(0,d'\xcirc N(\bx)\bigr)\\
& = \frac{1}{w+1}\bigl(N(\bx),0\bigr) - \frac{1}{(w+1)w}\sum_{j=0}^{w-1} \left(\frac{w-1}{2}-j\right) \bigl(0,t^j\xcirc d'\xcirc N(\bx)\bigr)\\
& - \frac{1}{(w+1)w}\sum_{j=0}^{w-1} \left(\frac{w-1}{2}-j\right) \bigl(0,t^j\xcirc \varrho_n\xcirc N(\bx)\bigr)\\
& = \frac{1}{w+1}\bigl(N(\bx),0\bigr) - \frac{1}{(w+1)w}\sum_{j=0}^{w-1} \left(\frac{w-1}{2}-j\right) \bigl(0,t^j\xcirc \varrho_n\xcirc N(\bx)\bigr),
\end{align*}
where the second equality follows from the facts that $d = d'+\varrho_n$ and $\sum_{i=0}^w \left(\frac{w}{2}-i\right)=0$; and the last equality, from the facts that $t^i\xcirc d'\xcirc N = t^i\xcirc N\xcirc d = N\xcirc d$ and $\sum_{i=0}^{w-1} \left(\frac{w-1}{2}-i\right)=0$. So,
$$
P(\bx,0) = \frac{1}{w+1} (N(\bx),0) + \sum_{j=0}^{w-1}\sum_{i=0}^w \frac{2j+2i -2w + 1}{2w(w+1)} (0,t^j\xcirc \varrho_n\xcirc t^i(\bx)\bigr),
$$
as desired
\end{proof}

We now consider the chain complex $(\ddot{X},\ddot{b},\ddot{d})$ and we denote by $\ker(\ddot{B})$ and $\ima(\ddot{B})$ the kernel and image of $\ddot{B}$ respectively. These are subcomplexes of $(\ddot{X},\ddot{b},\ddot{d})$. By~\eqref{eq13} and~\eqref{eq14}, we have
$$
\ker(\ddot{B})/\ima(\ddot{B}) = P^{\perp}(\ddot{X}).
$$
Consequently,
$$
H_*(\ker(\ddot{B})/\ima(\ddot{B}),\ddot{b},\ddot{d}) = 0.
$$
That is, the double mixed complex $(\ddot{X},\ddot{d},\ddot{b},\ddot{B})$ has the Connes property (\cite{C-Q2}).

\smallskip

Let us define the reduced cyclic complex $\ov{C}_X^{\lambda}$ to be the quotient double complex
$$
\ov{C}_X^{\lambda} := \ddot{X}/\ker(\ddot{B}).
$$
It is easy to check that
$$
\ov{C}_X^{\lambda} = \frac{P(\ddot{X})\oplus P^{\perp}(\ddot{X})}{\ima(\ddot{B})\oplus P^{\perp}(\ddot{X})} = \frac{P(\ddot{X})}{\ima(\ddot{B})}
$$
and that $\breve{B}$ induces the isomorphism of complexes $\Tot(\ov{C}_X^{\lambda})[1] \simeq \ima(\breve{B})$. So, we have the short exact sequence of double complexes
$$
\begin{tikzcd}[column sep=1cm]
0 \arrow{r} & \overline{C}_X^{\lambda}[0,1] \arrow{r}{i} & {P(\ddot{X})} \arrow{r}{j} & \ov{C}_X^{\lambda} \arrow{r} & 0
\end{tikzcd}
$$
where $j$ is the canonical surjection and $i$ is induced by $\breve{B}$. Taking total complexes we obtain the short exact sequence
$$
\begin{tikzcd}[column sep=1cm]
0 \arrow{r} & \Tot(\overline{C}_X^{\lambda})[1] \arrow{r}{i} & {P(\breve{X})} \arrow{r}{j} & \Tot(\ov{C}_X^{\lambda}) \arrow{r} & 0
\end{tikzcd}
$$

\subsection{A description of $P(\ddot{X})$}\label{sAdescription}
In this subsection we obtain a precise description of the double mixed complex $(P(\ddot{X}), \ddot{b},\ddot{d},\ddot{B})$. The main result is Theorem~\ref{th4.7}. We relegate its proof to Appendix~B.

Recall from Theorem~\ref{th3.4} that $\ov{X}_{vw}$ is the cokernel of $\ide-t\colon X_{vw}\longrightarrow
X_{vw}$. Consider the double complex
$$
(\wt{\mathfrak{X}}{,\hspace{1pt}}\wt{\mathfrak{b}}{,\hspace{1pt}}\wt{\mathfrak{d}}):=
\begin{tikzcd}[column sep=0.8cm,row sep=0.8cm]
&& \vdots  \arrow{d}{\wt{\mathfrak{b}}} & \vdots \arrow{d}{\wt{\mathfrak{b}}}& \vdots \arrow{d}{\wt{\mathfrak{b}}}\\
&\dots \arrow{r}{\wt{\mathfrak{d}}} & \wt{\mathfrak{X}}_{22} \arrow{r}{\wt{\mathfrak{d}}} \arrow{d}{\wt{\mathfrak{b}}} & \wt{\mathfrak{X}}_{21} \arrow{r}{\wt{\mathfrak{d}}} \arrow{d}{\wt{\mathfrak{b}}} & \wt{\mathfrak{X}}_{20} \arrow{d}{\wt{\mathfrak{b}}}\\
&\dots \arrow{r}{\wt{\mathfrak{d}}} & \wt{\mathfrak{X}}_{12} \arrow{r}{\wt{\mathfrak{d}}} \arrow{d}{\wt{\mathfrak{b}}} & \wt{\mathfrak{X}}_{11} \arrow{r}{\wt{\mathfrak{d}}} \arrow{d}{\wt{\mathfrak{b}}} & \wt{\mathfrak{X}}_{10} \arrow{d}{\wt{\mathfrak{b}}}\\
&\dots \arrow{r}{\wt{\mathfrak{d}}} & \wt{\mathfrak{X}}_{02} \arrow{r}{\wt{\mathfrak{d}}} & \wt{\mathfrak{X}}_{01} \arrow{r}{\wt{\mathfrak{d}}} & \wt{\mathfrak{X}}_{00},
\end{tikzcd}
$$
\noindent where $\wt{\mathfrak{X}}_{vw} := \overline{X}_{vw}\oplus \overline{X}_{v,w-1}$ and the boundary maps are defined by
\[
\wt{\mathfrak{b}}(\bx,\byy) := (\overline{b}(\bx),-\overline{b}(\byy))\qquad\text{and}\qquad \wt{\mathfrak{d}}(\bx,\byy) := (\overline{d}(\bx),-\overline{d}(\byy)),
\]
respectively. Let $\mathfrak{p}\colon X_{vw}\longrightarrow \ov{X}_{vw}$ be the map defined by
\[
\mathfrak{p}(\bx) := \frac{1}{w+1}[\bx]\qquad\text{for each very simple tensor $\bx\in X_{vw}$,}
\]
where $[\bx]$ denotes the class of $\bx$ in $\ov{X}_{vw}$. Let $\ov{N}\colon \ov{X}_{vw}\longrightarrow X_{vw}$ be the map induced by $N$. It is easy to check that
\begin{equation}\label{e4}
\ov{N}\xcirc \ov{b} = b\xcirc \ov{N},\qquad\ov{N}\xcirc \ov{d} = d'\xcirc \ov{N}\qquad\text{and}\qquad \mathfrak{p}\xcirc \ov{N} = \ide.
\end{equation}
Let
$$
\wt{\xi}\colon \wt{X}_{vw}\longrightarrow \wt{X}_{v,w-1}
$$
be the maps defined by $\wt{\xi}(\bx,\byy) := (0,\ov{\xi}(\bx))$, where $\ov{\xi}\colon \ov{X}_{vw}\longrightarrow \ov{X}_{v,w-2}$ is given by
$$
\ov{\xi}(\bx) := \begin{cases} \frac{1}{w+1}\mathfrak{p}\xcirc d' \xcirc \sigma'\xcirc  d\xcirc \ov{N}(\bx) &\text{if $w>1$,}\\ 0 &\text{if $w\le 1$,}\end{cases}
$$
where $\sigma'\colon X_{v,w-1}\longrightarrow X_{v,w-1}$ is the morphism introduced in the proof of Lemma~\ref{le3.3}.

\begin{proposition}\label{pr4.5} For $\bx\in\ov{X}_{vw}$ with $w>1$, we have
$$
\ov{\xi}(\bx) = -\frac{1}{w+1} \mathfrak{p}\xcirc \varrho_{n-1} \xcirc \sigma'\xcirc \varrho_n\xcirc \ov{N}(\bx).
$$
\end{proposition}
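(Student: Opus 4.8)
The plan is to cancel the common factor $\tfrac{1}{w+1}$ and prove
\[
\mathfrak{p}\xcirc d'\xcirc\sigma'\xcirc d\xcirc\ov{N}=-\,\mathfrak{p}\xcirc\varrho_{n-1}\xcirc\sigma'\xcirc\varrho_n\xcirc\ov{N}\qquad\text{on }\ov{X}_{vw},
\]
splitting $d=d'+\varrho_n$ and treating the two resulting terms separately. One must bear in mind that $\sigma'$, $N$ and $\mathfrak{p}$ carry normalizing constants depending on the second index of the piece on which they act (so here $\sigma'=\sum_{j=0}^{w-2}\tfrac{w-1-j}{w}t^j$ on $X_{v,w-1}$), and that the wrap-around summand of the Hochschild-type differential $d$ on $X_{v,w-1}$ is $\varrho_{n-1}$, not $\varrho_n$ --- this is precisely why $\varrho_{n-1}$ occurs in the statement. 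The one observation that drives everything is that $\mathfrak{p}$ factors through the $t$-coinvariants and $\sigma'$ is a polynomial in $t$, so $\mathfrak{p}\xcirc\sigma'$ is a scalar multiple of $\mathfrak{p}$, and likewise $\sigma'$ acts by a scalar on $\ima(N)=\ker(\ide-t)$; the actual value of this scalar is irrelevant, since it always ends up multiplying $\ov{d}^2=0$.

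First I would write $d\xcirc\ov{N}=d'\xcirc\ov{N}+\varrho_n\xcirc\ov{N}$. By~\eqref{e4}, $d'\xcirc\ov{N}(\bx)=\ov{N}(\ov{d}(\bx))$ lies in $\ima(\ov{N})$, hence is $t$-invariant; thus $\sigma'$ multiplies it by a scalar and keeps it inside $\ima(\ov{N})$, and one further application of $d'\xcirc\ov{N}=\ov{N}\xcirc\ov{d}$ together with $\ov{d}^2=0$ (Theorem~\ref{th3.4}) gives $d'\xcirc\sigma'\xcirc d'\xcirc\ov{N}(\bx)=0$. Therefore
\[
\mathfrak{p}\xcirc d'\xcirc\sigma'\xcirc d\xcirc\ov{N}=\mathfrak{p}\xcirc d'\xcirc\sigma'\xcirc\varrho_n\xcirc\ov{N}.
\]
Next, using $d'=d-\varrho_{n-1}$ on $X_{v,w-1}$, the identity to be proved reduces to $\mathfrak{p}\xcirc d\xcirc\sigma'\xcirc\varrho_n\xcirc\ov{N}=0$.

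For this last vanishing I would invoke three one-line facts drawn from the definitions and~\eqref{e4}: (i) since $\ov{d}$ is induced by $d$ and the constants of $\mathfrak{p}$ at bidegrees $(v,w-1)$ and $(v,w-2)$ differ by $\tfrac{w}{w-1}$, one has $\mathfrak{p}\xcirc d=\tfrac{w}{w-1}\,\ov{d}\xcirc\mathfrak{p}$ on $X_{v,w-1}$; (ii) $\mathfrak{p}\xcirc\sigma'$ is a scalar multiple of $\mathfrak{p}$ on $X_{v,w-1}$; and (iii) $\mathfrak{p}\xcirc\varrho_n\xcirc\ov{N}(\bx)=\tfrac{1}{w}\,\ov{d}(\bx)$. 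Fact (iii) is obtained by writing $\varrho_n=d-d'$ on $X_{vw}$, noting $[\ov{N}(\bx)]=(w+1)\bx$ in $\ov{X}_{vw}$ (because $N$ has $w+1$ summands and $t$ is trivial on coinvariants) and using $\mathfrak{p}\xcirc\ov{N}=\ide$, which yield $\mathfrak{p}\xcirc\varrho_n\xcirc\ov{N}(\bx)=\tfrac{w+1}{w}\ov{d}(\bx)-\ov{d}(\bx)=\tfrac{1}{w}\ov{d}(\bx)$. Feeding (ii) and (iii) into (i), $\mathfrak{p}\xcirc d\xcirc\sigma'\xcirc\varrho_n\xcirc\ov{N}(\bx)$ becomes a scalar times $\ov{d}(\ov{d}(\bx))=\ov{d}^2(\bx)=0$, which completes the argument.

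The telescoping with $d=d'+\varrho_n$ and with $d=d'+\varrho_{n-1}$ is routine; the only point that demands genuine care is keeping track of the normalizing constants $\tfrac{1}{w'+1}$ hidden in $\sigma'$, $N$ and $\mathfrak{p}$ at the three bidegrees $(v,w)$, $(v,w-1)$, $(v,w-2)$ that occur, and of the shift of the wrap-around index from $n$ to $n-1$. Once those are tracked correctly, each individual identity above is immediate.
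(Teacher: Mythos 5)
Your argument is correct and is essentially the paper's proof run in mirror image: the paper expands $d'=d-\varrho_{n-1}$ on $X_{v,w-1}$ first and $d=d'+\varrho_n$ on $X_{vw}$ second, killing the cross-terms $\mathfrak{p}\xcirc d\xcirc\sigma'\xcirc d\xcirc\ov{N}$ and $\mathfrak{p}\xcirc\varrho_{n-1}\xcirc\sigma'\xcirc d'\xcirc\ov{N}$, whereas you expand in the opposite order and kill $\mathfrak{p}\xcirc d'\xcirc\sigma'\xcirc d'\xcirc\ov{N}$ and $\mathfrak{p}\xcirc d\xcirc\sigma'\xcirc\varrho_n\xcirc\ov{N}$. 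Both vanishings rest on exactly the same facts (the scalar action of $\sigma'$ on $\ima(\ov{N})$ and on $t$-coinvariants, the intertwining $\mathfrak{p}\xcirc d=\tfrac{w}{w-1}\ov{d}\xcirc\mathfrak{p}$, $\mathfrak{p}\xcirc\ov{N}=\ide$, and $\ov{d}^2=0$), so this is the same proof with the two telescopings transposed.
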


\begin{proof} In fact
$$
(w+1)\ov{\xi}(\bx) = \mathfrak{p}\xcirc d \xcirc \sigma'\xcirc  d\xcirc \ov{N}(\bx) - \mathfrak{p}\xcirc \varrho_{n-1} \xcirc \sigma'\xcirc  d\xcirc \ov{N}(\bx) = - \mathfrak{p}\xcirc \varrho_{n-1} \xcirc \sigma'\xcirc  d\xcirc \ov{N}(\bx),
$$
since
\begin{align*}
\mathfrak{p}\xcirc d \xcirc \sigma'\xcirc  d\xcirc \ov{N}(\bx) &= \frac{w}{w-1} \ov{d} \xcirc \mathfrak{p}\xcirc \sigma'\xcirc  d\xcirc \ov{N}(\bx)\\
& = \sum_{j=0}^{w-2}\frac{w-j-1}{w-1} \ov{d} \xcirc \mathfrak{p}\xcirc t^j\xcirc d\xcirc \ov{N}(\bx)\\
& = \sum_{j=0}^{w-2} \frac{w-j-1}{w(w-1)} \ov{d} \xcirc \ov{d}\bigl([\ov{N}(\bx)]\bigr)\\
& = 0,
\end{align*}
where $[\ov{N}(\bx)]$ denotes the class  of $\ov{N}(\bx)\in X_{vw}$ in $\ov{X}_{vw}$. But
$$
\mathfrak{p}\xcirc\varrho_{n-1}\xcirc\sigma'\xcirc d\xcirc\ov{N}(\bx) = \mathfrak{p}\xcirc \varrho_{n-1} \xcirc \sigma'\xcirc  d'\xcirc \ov{N}(\bx) + \mathfrak{p}\xcirc\varrho_{n-1}\xcirc\sigma'\xcirc\varrho_n\xcirc\ov{N}(\bx)= \mathfrak{p}\xcirc\varrho_{n-1}\xcirc\sigma'\xcirc \varrho_n\xcirc\ov{N}(\bx),
$$
since
\begin{align*}
\mathfrak{p}\xcirc \varrho_{n-1} \xcirc \sigma'\xcirc  d'\xcirc \ov{N}(\bx) &= \mathfrak{p}\xcirc \varrho_{n-1} \xcirc \sigma'\xcirc  \ov{N}\xcirc \ov{d}(\bx)\\
&= \frac{w-1}{2}\mathfrak{p}\xcirc \varrho_{n-1} \xcirc  \ov{N}\xcirc \ov{d}(\bx)\\
&= \frac{1}{2}\ov{d} \xcirc\ov{d}(\bx)\\
& = 0,
\end{align*}
which finishes the proof.
\end{proof}

\begin{proposition}\label{pr4.6} Let $\bx_0^n\in\ov{X}_{vw}$ with $w>1$ and let $0=i_0<\cdots<i_w\le n$ be the indices such that $x_{i_j}\in M$. Then
$$
\ov{\xi}(\bx_0^n) = \sum_{0\le j<l\le w}(-1)^{(i_l+1)(n+1)+i_j}\frac{(w+2j-2l+1)}{(w-1)w(w+1)} \bigl[\varrho_{i_j} \xcirc \varrho_{i_l}(\bx_o^n)\bigr],
$$
where $\bigl[\varrho_{i_j} \xcirc \varrho_{i_l}(\bx_o^n)\bigr]$ denotes the class of $\varrho_{i_j} \xcirc \varrho_{i_l}(\bx_o^n)\in X_{v,w-2}$ in $\ov{X}_{v,w-2}$.
\end{proposition}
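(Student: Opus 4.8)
The plan is to expand the closed formula of Proposition~\ref{pr4.5},
$$\ov{\xi}(\bx) = -\frac{1}{w+1}\,\mathfrak{p}\xcirc \varrho_{n-1}\xcirc \sigma'\xcirc \varrho_n\xcirc \ov{N}(\bx),$$
and to collect the resulting terms. Since $\ov{X}_{vw}$ is generated by the classes of very simple tensors and every operator involved is $k$-linear, it suffices to establish the identity when $\bx=[\bx_0^n]$ for a very simple tensor with its $M$-factors in positions $0=i_0<\dots<i_w\le n$.

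First I would unwind $\varrho_n\xcirc\ov{N}(\bx_0^n)$. Writing $\ov{N}(\bx_0^n)=N(\bx_0^n)=\sum_{l=0}^w t^l(\bx_0^n)$ and using the definition of $t$, the summand $t^l(\bx_0^n)$ is, up to an explicit sign, the cyclic permutation of $\bx_0^n$ bringing one of the $w+1$ factors lying in $M$ to the zeroth slot, and the $w+1$ values of $l$ realise all of these permutations. The map $\varrho_n$ then contracts, via $\smalltriangledown$, the last factor of each such permutation with its zeroth factor; recording the signs of $t^l$ and of $\varrho_n$, this rewrites $\varrho_n\xcirc\ov{N}(\bx_0^n)$ as a signed sum of cyclic rotations of the one-step contractions $\varrho_{i_l}(\bx_0^n)\in X_{v,w-1}$.

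Next, feeding this expression into $\sigma'=\sum_{j=0}^{w-2}\frac{w-1-j}{w}\,t^j$ (the operator from the proof of Lemma~\ref{le3.3}, now acting on $X_{v,w-1}$) and then into $\varrho_{n-1}$ performs a second $\smalltriangledown$-contraction, again of the last factor with the zeroth one. Applying $\mathfrak{p}$ multiplies by $\frac{1}{w-1}$ and passes to $\ov{X}_{v,w-2}$, in which all cyclic rotations of a tensor are identified; hence every surviving summand becomes a scalar multiple of $[\varrho_{i_j}\xcirc\varrho_{i_l}(\bx_0^n)]$ for some pair $j<l$. Two things then have to be checked. First, that the total sign accumulated from $t$, from the two wrap-around contractions $\varrho_n$ and $\varrho_{n-1}$, and from the intervening rotations (bearing in mind that the first contraction lowers the tensor degree by one) equals $(-1)^{(i_l+1)(n+1)+i_j}$. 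Second, that the coefficient is $\frac{w+2j-2l+1}{(w-1)w(w+1)}$: for a fixed pair $(j,l)$ the term $[\varrho_{i_j}\xcirc\varrho_{i_l}(\bx_0^n)]$ is produced exactly by a consecutive block of the powers $t^{j'}$ appearing in $\sigma'$, so its coefficient equals $-\frac{1}{(w+1)(w-1)w}\sum_{j'}(w-1-j')$, a sum of consecutive integers; evaluating this over the correct range gives the stated value.

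The main obstacle is precisely this combinatorial bookkeeping: one must determine, for each ordered pair of $M$-positions $(i_j,i_l)$, exactly which permutations $t^l$ from the first step and which powers $t^{j'}$ inside $\sigma'$ route to it after the second contraction and the reduction modulo $\ide-t$, and then recognise the resulting sum as the asserted linear function of $j$ and $l$. The sign computation is the other delicate point, since $t$, $\varrho_n$ and $\varrho_{n-1}$ each carry a sign depending on the degree and on the position of the last $M$-factor, and these combine with the degree shift caused by the first contraction. Once these are in hand, the identity follows; its independence of the chosen representative of $[\bx_0^n]$ is automatic because $\ov{\xi}$ is a well-defined map on $\ov{X}_{vw}$.
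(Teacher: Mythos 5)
Your proposal takes exactly the same route as the paper: the paper's proof consists of the single sentence ``This follows from Proposition~\ref{pr4.5}; we leave the details to the reader,'' and your text is precisely an outline of those details. You correctly identify the pipeline ($\ov{N}$, then $\varrho_n$, then $\sigma'$, then $\varrho_{n-1}$, then $\mathfrak{p}$), the role of $\mathfrak{p}$ in passing to $\ov{X}_{v,w-2}$ and contributing the factor $\tfrac{1}{w-1}$, and the general shape of the coefficient as $-\tfrac{1}{(w-1)w(w+1)}\sum_{j'}(w-1-j')$ coming from the weights in $\sigma'$. That said, what you have written is a plan rather than a proof: the entire content of the proposition is the sign $(-1)^{(i_l+1)(n+1)+i_j}$ and the numerator $w+2j-2l+1$, and you explicitly defer both the identification of which powers $t^{l'}$ of $N$ and $t^{j'}$ of $\sigma'$ route to a given pair $(i_j,i_l)$ after the second contraction and reduction modulo $\ide-t$, and the accumulation of signs from $t$, $\varrho_n$, $\varrho_{n-1}$ and the degree drop. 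Since the paper leaves the same gap, your proposal is on equal footing with it; but to be a self-contained proof you would need to actually carry out this bookkeeping, and I would strongly recommend checking the resulting sign and coefficient against a small case such as $w=2$, $n=2$, $\bx=x_0\ot x_1\ot x_2$ with all $x_i\in M$, where a direct evaluation of $\tfrac{1}{w+1}\mathfrak{p}\xcirc d'\xcirc\sigma'\xcirc d\xcirc\ov{N}$ is completely elementary and serves as a reliable test of the final formula.
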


\begin{proof} This follows from Proposition~\ref{pr4.5}. We leave the details to the reader.
\end{proof}

Given a $t$-invariant element $\bx\in X_{vw}$, let
$$
\Upsilon(\bx) :=\begin{cases} (-\ide + \ov{N}\xcirc \mathfrak{p})\xcirc \sigma'\xcirc d(\bx)\in X_{v,w-1} &\text{ if $w>0$,}\\ 0 & \text{ if $w = 0$.}\end{cases}
$$

\begin{theorem}\label{th4.7} Let $\wt{\mathfrak{B}}\colon \wt{X}_{vw}\longrightarrow \wt{X}_{v,w+1}$ be the map defined by $\wt{\mathfrak{B}}(\bx,\byy) := (0,\bx)$. The diagram $(\wt{X},\wt{\mathfrak{b}},\wt{\mathfrak{d}} + \wt{\xi},\wt{\mathfrak{B}})$ is a mixed double complex and the map
\[
\Psi\colon (\wt{X},\wt{\mathfrak{b}},\wt{\mathfrak{d}}+\wt{\xi},\wt{\mathfrak{B}})\longrightarrow (P(\ddot{X}),\ddot{b},\ddot{d},\ddot{B}),
\]
defined  by
$$
\Psi_{vw}(\bx,\byy) := \frac{1}{w+1}\bigl(\ov{N}(\bx),\Upsilon\xcirc \ov{N}(\bx)\bigr) + \bigl(0,\ov{N}(\byy)\bigr),\\
$$
is an isomorphism of double mixed complexes.
\end{theorem}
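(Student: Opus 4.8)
The plan is to verify that $\Psi$ is a well-defined map of bigraded modules, that it is bijective, and that it intertwines the four operators $\wt{\mathfrak{b}}, \wt{\mathfrak{d}}+\wt{\xi}, \wt{\mathfrak{B}}$ with $\ddot{b}, \ddot{d}, \ddot{B}$ — the double-mixed-complex axioms for the source then follow automatically by transport along the isomorphism, using that $(P(\ddot X),\ddot b,\ddot d,\ddot B)$ is already known to be a double mixed subcomplex of $(\ddot X,\ddot b,\ddot d,\ddot B)$. First I would check that $\Psi_{vw}$ lands in $P(\ddot X_{vw})$: by Remark~\ref{info sobre P(X)} (i.e.\ Proposition~\ref{pr4.2}) it suffices to see that $\ov N(\bx)$ is $t$-invariant — which is immediate from $t\xcirc N = N$ on $X_{vw}$ via~\eqref{eq1} — and that $d(\ov N(\bx)) + (\ide - t)(\Upsilon\xcirc\ov N(\bx) + \ov N(\byy))$ is $t$-invariant. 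Since $\ov N(\byy)$ is $t$-invariant and $d\xcirc\ov N = \ov N\xcirc\ov d$ is $t$-invariant, this reduces to showing $(\ide-t)\xcirc\Upsilon$ kills the non-$t$-invariant part of $d(\ov N(\bx))$ modulo $t$-invariants; but by~\eqref{*eq5}, $(\ide-t)\xcirc\sigma' = \ide - \frac{1}{w}N$ on $X_{v,w-1}$, so $(\ide-t)\xcirc\Upsilon\xcirc\ov N(\bx) = (-\ide + \ov N\xcirc\mathfrak p)\xcirc(\ide - \frac1w N)\xcirc d\xcirc\ov N(\bx)$, and the fact that $\mathfrak p\xcirc\ov N=\ide$ together with $N\xcirc d\xcirc\ov N$ being $t$-invariant collapses this to $-d\xcirc\ov N(\bx)$ plus a $t$-invariant term. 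That gives $t$-invariance of the whole bracketed expression.

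Next I would establish bijectivity. The composite of $\Psi$ with the projection $P(\ddot X)\to P(\ddot X)\cap\acute X_{v,w-1}$ is, up to the identification, essentially $\ov N$ on the second coordinate, which is injective with left inverse $\mathfrak p$; combined with the explicit formulas~\eqref{e1}--\eqref{e3} and Corollary~\ref{cor4.4} for $P$ on generators, one reads off an inverse sending $(\bx,0)\in P(\ddot X_{vw})$ to $\bigl((w+1)\mathfrak p(\bx),\;\text{correction}\bigr)$ and $(0,\byy)$ to $(0,w\,\mathfrak p(\byy))$. The cleanest way to organize this: show $\Psi$ is surjective by hitting the two families $P(\bx,0)$ and $P(0,\byy)$ of Corollary~\ref{cor4.4} and formula~\eqref{e1}, and injective by a dimension/filtration argument on the $w$-grading, or directly by exhibiting the inverse and checking both composites are the identity using~\eqref{e4}.

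The bulk of the work is the compatibility with the differentials. Compatibility with $\wt{\mathfrak B}$ and $\ddot B$ is the easiest: $\ddot B\xcirc\Psi(\bx,\byy) = (0,N\xcirc\frac{1}{w+1}\ov N(\bx)) = (0,\ov N(\bx))$ since $N\xcirc\ov N = (w+1)\,(\text{inclusion})$ on $t$-invariants, matching $\Psi\xcirc\wt{\mathfrak B}(\bx,\byy) = \Psi(0,\bx) = (0,\ov N(\bx))$. Compatibility with $\wt{\mathfrak b}$, $\ddot b$ uses only the first identity in~\eqref{e4} and the fact that $\ddot b$ preserves the splitting; the $\Upsilon$-term is handled by $\ov b\xcirc\mathfrak p = \mathfrak p\xcirc b$ and $b\xcirc\sigma' = \sigma'\xcirc b$, the latter because $t\xcirc b = b\xcirc t$. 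The genuinely delicate point — and the one I expect to be the main obstacle — is $\ddot d\xcirc\Psi = \Psi\xcirc(\wt{\mathfrak d}+\wt\xi)$: here the second component of $\ddot d$ involves $-d'$ acting on the $\Upsilon\xcirc\ov N(\bx)$ piece and the $(\ide-t)$-term acting on $\ov N(\bx)$, and one must show that after applying $\mathfrak p$ these reassemble into $\ov d$ plus exactly the correction term $\ov\xi$. This is where Proposition~\ref{pr4.5} (the rewriting of $\ov\xi$ via $\varrho_{n-1}\xcirc\sigma'\xcirc\varrho_n\xcirc\ov N$) is essential: the ``defect'' $d'\xcirc\Upsilon\xcirc\ov N - \ov N\xcirc\ov d$, pushed down by $\mathfrak p$, should equal $-(w+1)\,\ov\xi$ after using $d = d'+\varrho_n$, $(\ide-t)\xcirc\sigma' = \ide-\frac1w N$, and $\mathfrak p\xcirc d\xcirc\ov N$-type vanishings of the kind proven inside Proposition~\ref{pr4.5}. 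I would isolate this as a lemma, prove it by the same manipulations used there, and then the theorem follows. The residual axiom checks ($\wt\xi^2=0$, anticommutation of $\wt\xi$ with $\wt{\mathfrak b}$ and $\wt{\mathfrak B}$, and $\wt{\mathfrak d}\xcirc\wt\xi + \wt\xi\xcirc\wt{\mathfrak d}=0$) are then forced by transport of structure, so they need not be verified by hand. As the statement says, the detailed computations are relegated to Appendix~B.
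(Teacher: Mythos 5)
Your plan — verify that $\Psi$ lands in $P(\ddot{X})$ via the $t$-invariance criterion of Remark~\ref{info sobre P(X)}, check that $\Psi$ intertwines all four operators using the identities~\eqref{e4} and the $\sigma'$-formula, isolate the compatibility with $\ddot{d}$ as the key lemma relating $\ov{\xi}$ to the defect of $\Upsilon$, and obtain the mixed-complex axioms for $(\wt{X},\wt{\mathfrak{b}},\wt{\mathfrak{d}}+\wt{\xi},\wt{\mathfrak{B}})$ by transport along the bijection — is essentially the route the paper takes in Appendix~B. The paper organizes the same ingredients slightly differently (it characterizes $\Upsilon(\bx)$ as the unique element of $\ker(\mathfrak{p})$ with $(\bx,\Upsilon(\bx))\in P(\ddot{X})$, decomposes $P(\ddot{X})={}^e\!P\oplus{}^t\!P$, and proves the key identity $\xi\xcirc\ov{N}=(w+1)\ov{N}\xcirc\ov{\xi}$ via Proposition~\ref{prB.5} rather than by your direct $\mathfrak{p}$-pushdown), but the argument is the same in substance and your identification of the delicate step and the needed tools is accurate.
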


\begin{proof} See Appendix~B.
\end{proof}

\begin{proposition}\label{prop 3.10} The following assertions hold:

\begin{enumerate}

\smallskip

\item The map
$$
\Gamma\colon \Tot(\ov{X},\ov{b},\ov{d})\longrightarrow \Tot \BC(\wt{X},\wt{\mathfrak{b}}+\wt{\mathfrak{d}}+\wt{\xi}, \wt{\mathfrak{B}}),
$$
defined by
$$
\Gamma_{vw}(\bx):= \bigl((\bx,0),(-\ov{\xi}(\bx),0),(\ov{\xi}^2(\bx),0),(-\ov{\xi}^3(\bx),0),(\ov{\xi}^4(\bx),0),\dots\bigr),
$$
is a morphism of complexes.

\smallskip

\item The map
$$
\Pi\colon \Tot \BC(\wt{X},\wt{\mathfrak{b}}+\wt{\mathfrak{d}}+\wt{\xi}, \wt{\mathfrak{B}})\longrightarrow \Tot(\ov{X},\ov{b},\ov{d}),
$$
defined by
$$
\Pi_{vw}\bigl((\bx_0,\byy_0),(\bx_1,\byy_1),(\bx_2,\byy_2),(\bx_3,\byy_3),(\bx_4,\byy_4),\dots\bigr):= \bx_0,
$$
is a morphism of complexes.

\smallskip

\item $\Pi\xcirc \Gamma = \ide$ and $\Gamma\xcirc \Pi$ is homotopic to the identity map. A homotopy is the family of maps
$$
\Xi\colon \Tot\BC(\wt{X},\wt{\mathfrak{b}}+\wt{\mathfrak{d}}+\wt{\xi},\wt{\mathfrak{B}})_w\longrightarrow \Tot\BC(\wt{X},\wt{\mathfrak{b}}+ \wt{\mathfrak{d}} + \wt{\xi},\wt{\mathfrak{B}})_{w+1},
$$
defined by
$$
\Xi_{vw}(\bz):= \bigl((0,0),(-\ov{\byy}_0,0),(-\ov{\byy}_1,0),(-\ov{\byy}_2,0),(-\ov{\byy}_3,0), \dots\bigr),
$$
where
$$
\qquad \bz := \bigl((\bx_0,\byy_0),(\bx_1,\byy_1),(\bx_2,\byy_2),(\bx_3,\byy_3),\dots\bigr)\qquad\text{and}\qquad \ov{\byy}_i:= \sum_{j=0}^i (-1)^j \ov{\xi}^j(\byy_{i-j}).
$$
\end{enumerate}
\end{proposition}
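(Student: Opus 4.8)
The plan is to verify the three assertions by a direct computation, using only the explicit formula for the differential $\partial$ of $\Tot\BC(\wt{X},\wt{\mathfrak{b}}+\wt{\mathfrak{d}}+\wt{\xi},\wt{\mathfrak{B}})$ and one structural fact contained in Theorem~\ref{th4.7}: since $(\wt{X},\wt{\mathfrak{b}},\wt{\mathfrak{d}}+\wt{\xi},\wt{\mathfrak{B}})$ is a double mixed complex, comparing the axioms $(\wt{\mathfrak{d}}+\wt{\xi})\xcirc\wt{\mathfrak{b}}+\wt{\mathfrak{b}}\xcirc(\wt{\mathfrak{d}}+\wt{\xi})=0$ and $(\wt{\mathfrak{d}}+\wt{\xi})^2=0$ with the relations $\ov{b}^2=\ov{d}^2=\ov{b}\xcirc\ov{d}+\ov{d}\xcirc\ov{b}=0$ yields that $\ov{\xi}$ commutes with both $\ov{b}$ and $\ov{d}$. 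Representing an element of $\Tot\BC(\wt{X},\wt{\mathfrak{b}}+\wt{\mathfrak{d}}+\wt{\xi},\wt{\mathfrak{B}})$ as a tuple $(\bz_0,\bz_1,\bz_2,\dots)$ with $\bz_j$ in the $j$-th column of $\BC$, its image under $\partial$ is the tuple whose $i$-th entry is $(\wt{\mathfrak{b}}+\wt{\mathfrak{d}}+\wt{\xi})(\bz_i)+\wt{\mathfrak{B}}(\bz_{i+1})$. As $\ov{\xi}$ lowers the index $w$ by $2$, it is nilpotent on each $\ov{X}_{vw}$, so all the sums in the statement are finite and $\Gamma$, $\Pi$, $\Xi$ are well defined; the attendant degree check — each $\ov{\xi}^i$ shifts the bidegree by $(0,-2i)$, matching the jump from the $0$-th to the $i$-th column of $\BC$ — is routine.

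For item~(1), $\Gamma(\bx)$ has $i$-th entry $(-1)^i(\ov{\xi}^i(\bx),0)$; in the $i$-th entry of $\partial(\Gamma(\bx))$ the summand $\wt{\xi}\bigl((-1)^i(\ov{\xi}^i(\bx),0)\bigr)=(-1)^i(0,\ov{\xi}^{i+1}(\bx))$ cancels against $\wt{\mathfrak{B}}\bigl((-1)^{i+1}(\ov{\xi}^{i+1}(\bx),0)\bigr)=(-1)^{i+1}(0,\ov{\xi}^{i+1}(\bx))$, and what survives is $(-1)^i\bigl((\ov{b}+\ov{d})\xcirc\ov{\xi}^i(\bx),0\bigr)$; using the commutation of $\ov{\xi}$ with $\ov{b}+\ov{d}$, this is exactly the $i$-th entry of $\Gamma\bigl((\ov{b}+\ov{d})(\bx)\bigr)$, so $\partial\xcirc\Gamma=\Gamma\xcirc(\ov{b}+\ov{d})$. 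Item~(2) is even simpler: $\Pi$ projects an element of $\wt{X}_{vw}=\ov{X}_{vw}\oplus\ov{X}_{v,w-1}$ (in the $0$-th column) onto its first summand; since $\wt{\xi}$ and $\wt{\mathfrak{B}}$ land in the second summand and $\wt{\mathfrak{b}},\wt{\mathfrak{d}}$ act as $\ov{b},\ov{d}$ on the first, the first summand of the $0$-th entry of $\partial(\bz)$ is $(\ov{b}+\ov{d})$ applied to the first summand of $\bz_0$, i.e.\ $\Pi\xcirc\partial=(\ov{b}+\ov{d})\xcirc\Pi$. And $\Pi\xcirc\Gamma=\ide$ holds by inspection.

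The substance of the proposition lies in item~(3), i.e.\ the homotopy identity $\partial\xcirc\Xi+\Xi\xcirc\partial=\Gamma\xcirc\Pi-\ide$. I would first record the recursion
$$
\ov{\byy}_i=\byy_i-\ov{\xi}(\ov{\byy}_{i-1}),
$$
which drops straight out of a one-step reindexing of the sum defining $\ov{\byy}_i$. Next, writing $\byy'_m:=-(\ov{b}+\ov{d})(\byy_m)+\ov{\xi}(\bx_m)+\bx_{m+1}$ for the second summand of the $m$-th entry of $\partial(\bz)$ and forming $\ov{\byy'}_m:=\sum_{j=0}^m(-1)^j\ov{\xi}^j(\byy'_{m-j})$, the same reindexing, the commutation of $\ov{\xi}$ with $\ov{b}+\ov{d}$, and a telescoping of two nested sums combine to give
$$
\ov{\byy'}_{i-1}=-(\ov{b}+\ov{d})(\ov{\byy}_{i-1})+\bx_i-(-1)^i\ov{\xi}^i(\bx_0).
$$
With these two identities available, a componentwise evaluation of $(\partial\xcirc\Xi+\Xi\xcirc\partial)(\bz)$ — whose $0$-th entry is $(0,-\byy_0)$ and whose $i$-th entry for $i\ge1$ is $\bigl(-(\ov{b}+\ov{d})(\ov{\byy}_{i-1})-\ov{\byy'}_{i-1},\,-\ov{\xi}(\ov{\byy}_{i-1})-\ov{\byy}_i\bigr)$ — simplifies, after substitution, to $(0,-\byy_0)$ in entry $0$ and to $\bigl((-1)^i\ov{\xi}^i(\bx_0)-\bx_i,\,-\byy_i\bigr)$ in entry $i\ge1$; this is precisely $(\Gamma\xcirc\Pi-\ide)(\bz)$.

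I expect the main obstacle to be exactly the bookkeeping in this last step — reindexing the double sums $\ov{\byy}_i$ and $\ov{\byy'}_i$ correctly, keeping the alternating signs $(-1)^j$ straight, and spotting the telescoping that leaves precisely $\bx_i-(-1)^i\ov{\xi}^i(\bx_0)$. The verifications in items~(1) and~(2), the well-definedness, and $\Pi\xcirc\Gamma=\ide$ are all mechanical once the formula for $\partial$ and the commutation relation for $\ov{\xi}$ are in place.
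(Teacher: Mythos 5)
Your proof is correct but takes a genuinely different route from the paper's. The paper disposes of this proposition in a few lines by exhibiting the unperturbed special deformation retract (where $\wt{\xi}=0$, giving $\Gamma'$, $\Pi'$, $\Xi'$) together with the perturbation $\Omega'$ induced by $\wt{\xi}$, and then invoking the perturbation lemma (Theorem~\ref{lema de perturbacion}); the stated formulas for $\Gamma$, $\Pi$, $\Xi$ are then what the perturbation lemma outputs, once one actually unpacks $i^1=i+hAi$, $p^1=p+pAh$, $h^1=h+hAh$ with $A=(\ide-\Omega'\xcirc\Xi')^{-1}\xcirc\Omega'$. Your route is a direct verification: extract from Theorem~\ref{th4.7} that $\ov{\xi}$ commutes with $\ov{b}$ and $\ov{d}$, write out the total differential of $\Tot\BC$, and check $\partial\xcirc\Gamma=\Gamma\xcirc(\ov{b}+\ov{d})$, $\Pi\xcirc\partial=(\ov{b}+\ov{d})\xcirc\Pi$, and $\partial\xcirc\Xi+\Xi\xcirc\partial=\Gamma\xcirc\Pi-\ide$ componentwise, the last via the recursion $\ov{\byy}_i=\byy_i-\ov{\xi}(\ov{\byy}_{i-1})$ and the telescoping identity $\ov{\byy'}_{i-1}=-(\ov{b}+\ov{d})(\ov{\byy}_{i-1})+\bx_i-(-1)^i\ov{\xi}^i(\bx_0)$. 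I checked both identities and the componentwise match, and they are right. What the paper's approach buys is brevity and automatic inheritance of the ``special deformation retract'' structure (side conditions $h\xcirc i=0$, $p\xcirc h=0$, $h\xcirc h=0$) from the unperturbed datum; what yours buys is transparency — the reader sees exactly why $\Gamma$, $\Pi$, $\Xi$ have the shapes they do without having to trust or recompute the perturbation-lemma machinery, and the only input needed is the double-mixed-complex axiom $(\wt{\mathfrak{d}}+\wt{\xi})^2=0=\wt{\mathfrak{b}}\xcirc(\wt{\mathfrak{d}}+\wt{\xi})+(\wt{\mathfrak{d}}+\wt{\xi})\xcirc\wt{\mathfrak{b}}$ from Theorem~\ref{th4.7}, correctly distilled to the commutations $\ov{\xi}\xcirc\ov{b}=\ov{b}\xcirc\ov{\xi}$ and $\ov{\xi}\xcirc\ov{d}=\ov{d}\xcirc\ov{\xi}$.
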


\begin{proof} Consider the following special deformation retract
$$
\begin{tikzpicture}[label distance=5mm]
\draw (0,0) node {$\Tot(\ov{X},\ov{b},\ov{d})$};\draw (4.5,0) node {$\Tot\BC(\wt{X},\wt{\mathfrak{d}}+\wt{\mathfrak{d}},\wt{\mathfrak{B}})$,};\draw[<-] (1,0.12) -- node[above=-2pt,font=\scriptsize] {$\Pi'$}(2.8,0.12);\draw[->] (1,-0.12) -- node[below=-2pt,font=\scriptsize] {$\Gamma'$}(2.8,-0.12);\draw (8,0) node {$\Xi'$,};
\end{tikzpicture}
$$
where $\Gamma'$, $\Pi'$ and $\Xi'$ are given by
\begin{align*}
&\Gamma'_{vw}(\bx):= \bigl((\bx,0),(0,0),(0,0),(0,0),(0,0),\dots\bigr),\\
&\Pi'_{vw}\bigl((\bx_0,\byy_0),(\bx_1,\byy_1),(\bx_2,\byy_2),(\bx_3,\byy_3),(\bx_4,\byy_4),\dots\bigr):= \bx_0
\shortintertext{and}
&\Xi'_{vw}\bigl(\bigl((\bx_0,\byy_0),(\bx_1,\byy_1),(\bx_2,\byy_2),\dots\bigr):= \bigl((0,0),(-\byy_0,0), (-\byy_1,0),(-\byy_2,0),\dots\bigr),
\end{align*}
endowed with the perturbation
$$
\Omega'\colon\bigoplus_{i\ge 0}\wt{X}_{*,*-i}\longrightarrow \bigoplus_{i\ge 0}\wt{X}_{*,*-1-i},
$$
given by
$$
\Omega'_{vw}\bigl((\bx_0,\byy_0),(\bx_1,\byy_1),(\bx_2,\byy_2),\dots\bigr):= \bigl((0,\ov{\xi}(\bx_0)), (0,\ov{\xi}(\bx_1)),(0,\ov{\xi}(\bx_2)), \dots\bigr).
$$
The proposition follows applying the perturbation lemma to this datum.
\end{proof}

Let $i\colon \ov{X}_{v,w-1}\longrightarrow \wt{X}_{vw}$ and $\pi\colon \wt{X}_{vw}\longrightarrow \ov{X}_{vw}$ be the canonical maps. The short exact sequence of double complexes
\begin{equation}
\begin{tikzcd}[column sep=1cm]
0 \arrow{r} & (\ov{X}_{*,*-1},\ov{b},\ov{d}) \arrow{r}{i} & {(\wt{X}_{**},\wt{\mathfrak{b}}, \wt{\mathfrak{d}}+\wt{\xi})} \arrow{r}{\pi} & (\ov{X}_{**},\ov{b},\ov{d}) \arrow{r} & 0
\end{tikzcd} \label{eq16}
\end{equation}
splits in each level via the maps $s\colon \ov{X}_{vw} \longrightarrow \wt{X}_{vw}$ and $r\colon \wt{X}_{vw} \longrightarrow \ov{X}_{v,w-1}$, given by $s(\bx) := (\bx,0)$ and $r(\bx,\byy) := \byy$. From this it follows immediately that the connection map of the homology long exact sequence associated with~\eqref{eq16} is induced by the morphism of double complexes
$$
\ov{\xi}\colon (\ov{X}_{**},\ov{b},\ov{d})\longrightarrow (\ov{X}_{*,*-2},\ov{b},\ov{d}).
$$

\begin{proposition}\label{pr4.8} The maps
\begin{align*}
& S_n\colon \HC_n(E,M)\longrightarrow \HC_{n-2}(E,M),\\
& B_n\colon \HC_n(E,M)\longrightarrow \HH_{n+1}(E,M)
\shortintertext{and}
& i_n\colon \HH_n(E,M)\longrightarrow \HC_n(E,M),
\end{align*}
are induced by $-\ov{\xi}$, $i$ and $\pi$, respectively.
\end{proposition}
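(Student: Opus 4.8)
The plan is to carry the Connes exact sequence along the chain of equivalences built in the previous sections and then read the three maps off the standard description of $S$, $B$ and $i$. Recall that, by definition, these are the maps of the long exact sequence attached to the short exact sequence of complexes
\[
0\longrightarrow (\breve{\mathfrak{X}},\breve{\mathfrak{b}})\longrightarrow \Tot\BC(\breve{\mathfrak{X}},\breve{\mathfrak{b}},\breve{\mathfrak{B}})\longrightarrow \mathcal{Q}\longrightarrow 0,
\]
in which $(\breve{\mathfrak{X}},\breve{\mathfrak{b}})$ is the zeroth column of $\Tot\BC$ and $\mathcal{Q}=\Tot\BC(\breve{\mathfrak{X}},\breve{\mathfrak{b}},\breve{\mathfrak{B}})/(\breve{\mathfrak{X}},\breve{\mathfrak{b}})$, which is again $\Tot\BC$ shifted up in degree by two. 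Since $S$, $B$, $i$ are natural under quasi-isomorphisms of mixed complexes, one replaces $(\breve{\mathfrak{X}},\breve{\mathfrak{b}},\breve{\mathfrak{B}})$ first by $(\breve{X},\breve{b},\breve{B})$ using Theorem~\ref{th3.2}; then by its harmonic subcomplex $(P(\breve{X}),\breve{b},\breve{B})$, which is allowed because the cokernel $(P^{\perp}(\breve{X}),\breve{b})$ is acyclic and, by~\eqref{eq13}, $\breve{B}$ vanishes on $P^{\perp}(\breve{X})$, so that the associated cyclic bicomplex is acyclic as well; and finally, via Theorem~\ref{th4.7}, by the total mixed complex of $(\wt{X},\wt{\mathfrak{b}},\wt{\mathfrak{d}}+\wt{\xi},\wt{\mathfrak{B}})$. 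After these reductions $\HH_*(E,M)=\Ho_*\bigl(\Tot(\wt{X},\wt{\mathfrak{b}},\wt{\mathfrak{d}}+\wt{\xi})\bigr)$, which is the zeroth column of $\Tot\BC(\wt{X},\wt{\mathfrak{b}}+\wt{\mathfrak{d}}+\wt{\xi},\wt{\mathfrak{B}})$, while Proposition~\ref{prop 3.10} identifies $\HC_*(E,M)$ with $\Ho_*(\Tot(\ov{X},\ov{b},\ov{d}))$ through the quasi-isomorphism $\Pi$.

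With these identifications in place, $i_n$ costs nothing: it is induced by the inclusion of the zeroth column $\Tot(\wt{X},\wt{\mathfrak{b}},\wt{\mathfrak{d}}+\wt{\xi})\hookrightarrow \Tot\BC(\wt{X},\wt{\mathfrak{b}}+\wt{\mathfrak{d}}+\wt{\xi},\wt{\mathfrak{B}})$, and the composite of this inclusion with $\Pi$ — which by its definition in Proposition~\ref{prop 3.10} is projection onto the zeroth column followed by the projection $\wt{X}_{vw}=\ov{X}_{vw}\oplus\ov{X}_{v,w-1}\to\ov{X}_{vw}$ onto the first summand — is precisely the canonical surjection $\pi$ of~\eqref{eq16}.

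For $B_n$ and $S_n$ the crucial identity is $\wt{\mathfrak{B}}=i\xcirc\pi$ on $\wt{X}$, read off from $\wt{\mathfrak{B}}(\bx,\byy)=(0,\bx)$, $\pi(\bx,\byy)=\bx$ and $i(\bx)=(0,\bx)$. Writing classes of $\Tot\BC(\wt{X},\wt{\mathfrak{b}}+\wt{\mathfrak{d}}+\wt{\xi},\wt{\mathfrak{B}})$ and of its degree-two shift by column-tuples $(z_0,z_1,z_2,\dots)$ with $z_j\in\Tot(\wt{X},\wt{\mathfrak{b}}+\wt{\mathfrak{d}}+\wt{\xi})$, the isomorphism onto $\Ho_*(\Tot(\ov{X},\ov{b},\ov{d}))$ induced by $\Pi$ sends $[(z_0,z_1,\dots)]$ to $[\pi(z_0)]$. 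The periodicity map $S_n$ sends the class of a cycle $(z_0,z_1,z_2,\dots)$ to that of $(z_1,z_2,\dots)$; the cocycle condition in the zeroth column reads $(\wt{\mathfrak{b}}+\wt{\mathfrak{d}}+\wt{\xi})(z_0)=-\wt{\mathfrak{B}}(z_1)=-i\xcirc\pi(z_1)$, so that, writing $z_0=(\bx_0,\byy_0)$ and equating the two $\ov{X}$-coordinates, one gets $(\ov{b}+\ov{d})(\bx_0)=0$ and $\pi(z_1)=(\ov{b}+\ov{d})(\byy_0)-\ov{\xi}(\bx_0)$; hence $[\pi(z_1)]=-\ov{\xi}_*[\pi(z_0)]$ and $S_n$ is induced by $-\ov{\xi}$, the minus sign coming from the sign on the second coordinate of $\wt{\mathfrak{b}}$ and $\wt{\mathfrak{d}}$. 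Likewise, the connecting homomorphism takes the class of a cycle $(z_0,z_1,\dots)$ of the shifted complex, lifts it by prepending a zero, applies the total differential and keeps the zeroth-column component, which is $\wt{\mathfrak{B}}(z_0)=i\xcirc\pi(z_0)$; under the same identification this says that $B_n$ is induced by $i$. All of this is consistent with the remark preceding the statement that the connecting map of~\eqref{eq16} is induced by $\ov{\xi}$, the sign difference with $S_n$ being the usual one caused by rotating an exact triangle.

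The last two paragraphs are short computations. The part that needs care is the first one, i.e.\ verifying that each of the three reductions really does transport the Connes short exact sequence and hence its long exact sequence — this is where the acyclicity of $(P^{\perp}(\breve{X}),\breve{b})$ and the vanishing of $\breve{B}$ on $P^{\perp}(\breve{X})$ are used — together with a coherent choice of the sign conventions in $\BC$ and in the degree-two shift, which is what ultimately pins down the sign of $S_n$.
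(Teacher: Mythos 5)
Your proposal is correct and fills in exactly the details the paper leaves implicit when it says "this follows by a direct computation using Proposition~\ref{prop 3.10}." The three reductions you perform (Theorem~\ref{th3.2}, the harmonic splitting with $(P^{\perp}(\breve{X}),\breve{b})$ acyclic and $\breve{B}$ vanishing on it by~\eqref{eq13}, then Theorem~\ref{th4.7}) are precisely what make $(\wt{X},\wt{\mathfrak{b}},\wt{\mathfrak{d}}+\wt{\xi},\wt{\mathfrak{B}})$ compute the relative homologies, and the subsequent bookkeeping with $\Pi$ and the identity $\wt{\mathfrak{B}}=i\xcirc\pi$ is the paper's intended computation. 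Your explicit cocycle equation $(\wt{\mathfrak{b}}+\wt{\mathfrak{d}}+\wt{\xi})(z_0)+\wt{\mathfrak{B}}(z_1)=0$ correctly yields $\pi(z_1)=(\ov{b}+\ov{d})(\byy_0)-\ov{\xi}(\bx_0)$ and hence the minus sign on $\ov{\xi}$ in $S_n$, and the snake-lemma computation of the connecting map gives $\wt{\mathfrak{B}}(z_0)=i\xcirc\pi(z_0)$, so $B_n$ is induced by $i$; your reading of $\Pi$ restricted to the zeroth column as $\pi$ handles $i_n$. The comparison with the paper's earlier remark that the connecting map of~\eqref{eq16} is induced by $\ov{\xi}$ is the right sanity check for the sign.
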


\begin{proof} This follows by a direct computation using Proposition~\ref{prop 3.10}. We leave the details to the reader.
\end{proof}

\begin{proposition}\label{pr4.9} Let $\wt{P}\colon \ddot{X}\to \wt{X}$ be the map $\Psi^{-1}\xcirc P$. We have
\begin{align}
\wt{P}(0,\byy) & = \frac{1}{w} (0,[\byy])\qquad \text{for $(0,\byy)\in \ddot{X}_{vw}$ with $w>0$.}\label{et1}
\shortintertext{and}
\wt{P}(\bx,0) & = \begin{cases} ([\bx],0) &\text{if $(\bx,0) \in \ddot{X}_{n0}$,} \\ ([\bx],0) + \displaystyle{\sum_{i=0}^w \frac{2i-w}{2w(w+1)} \bigl(0,[\varrho_n\xcirc t^i(\bx)]\bigr)} &\text{if $(\bx,0) \in \ddot{X}_{vw}$ with $w>0$.}\end{cases}\label{et2}
\end{align}
\end{proposition}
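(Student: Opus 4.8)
The plan is to exploit that, by Theorem~\ref{th4.7}, $\Psi$ is an isomorphism of double mixed complexes onto $(P(\ddot X),\ddot b,\ddot d,\ddot B)$, so that $\wt P=\Psi^{-1}\xcirc P$ is characterized by $\Psi\xcirc\wt P=P$. Hence it suffices to take each expression claimed on the right-hand side of~\eqref{et1} and~\eqref{et2}, apply $\Psi$, and check that one recovers $P$ of the corresponding element of $\ddot X$; the preimage is then automatically unique. Throughout I will use the explicit formulas for $P$ furnished by~\eqref{e1},~\eqref{e2},~\eqref{e3} and Corollary~\ref{cor4.4}, the defining formula $\Psi_{vw}(\bx,\byy)=\frac{1}{w+1}\bigl(\ov N(\bx),\Upsilon\xcirc\ov N(\bx)\bigr)+(0,\ov N(\byy))$, the identity $\mathfrak p\xcirc\ov N=\ide$ from~\eqref{e4} together with $\ov N([\bx])=N(\bx)$ (by definition of $\ov N$), and the fact, recorded in~\eqref{eq1}, that $N(\bx)$ is $t$-invariant.

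First the two easy cases. For~\eqref{et1}, applying $\Psi_{vw}$ to $\frac1w(0,[\byy])$ yields $\bigl(0,\tfrac1w\ov N([\byy])\bigr)=\bigl(0,\tfrac1w N(\byy)\bigr)$, which is $P(0,\byy)$ by~\eqref{e1}. For the first line of~\eqref{et2} we are in bidegree $(n,0)$, where $\ov X_{n0}=X_{n0}$, $\ov N=\ide$ and $\Upsilon=0$; thus $\Psi_{n0}([\bx],0)=(\bx,0)=P(\bx,0)$ by~\eqref{e2}.

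The substance is the case $w>0$ of~\eqref{et2}. Applying $\Psi_{vw}$ to $([\bx],0)+\sum_{i=0}^w\frac{2i-w}{2w(w+1)}\bigl(0,[\varrho_n\xcirc t^i(\bx)]\bigr)$ gives, in the first coordinate, $\frac{1}{w+1}N(\bx)$, which agrees with the first coordinate of $P(\bx,0)$ in Corollary~\ref{cor4.4}; in the second coordinate it gives $\frac{1}{w+1}\Upsilon(N(\bx))+\sum_{i=0}^w\frac{2i-w}{2w(w+1)}N\bigl(\varrho_n\xcirc t^i(\bx)\bigr)$. Comparing with the second coordinate in Corollary~\ref{cor4.4} and using the elementary identity $\sum_{j=0}^{w-1}(2j+2i-2w+1)=w(2i-w)$, the whole verification collapses to the single identity
\[
\Upsilon(N(\bx)) = \frac{1}{2w}\sum_{j=0}^{w-1}(2j-w+1)\,t^j\xcirc\varrho_n(N(\bx)).
\]
To prove this I would substitute $d=d'+\varrho_n$ in the definition of $\Upsilon$. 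The crucial point is that $d'(N(\bx))=N(d(\bx))$ is $t$-invariant; on a $t$-invariant element $\bv\in X_{v,w-1}$ one has $\sigma'(\bv)=\frac{w-1}{2}\bv$ (again $t$-invariant) and $(-\ide+\ov N\xcirc\mathfrak p)(\bv)=0$, because $\ov N\xcirc\mathfrak p$ is the identity on $t$-invariants. Hence the $d'$-contribution to $\Upsilon(N(\bx))$ vanishes and $\Upsilon(N(\bx))=(-\ide+\ov N\xcirc\mathfrak p)\xcirc\sigma'\xcirc\varrho_n(N(\bx))$. Expanding the $X_{v,w-1}$-version $\sigma'=\sum_{j=0}^{w-2}\frac{w-1-j}{w}t^j$, using $[t^j(\cdot)]=[\cdot]$ and $\sum_{j=0}^{w-2}(w-1-j)=\binom w2$, one finds $\ov N\xcirc\mathfrak p\xcirc\sigma'\xcirc\varrho_n(N(\bx))=\frac{w-1}{2w}N\bigl(\varrho_n(N(\bx))\bigr)$, and collecting the coefficient of each $t^j\xcirc\varrho_n(N(\bx))$ produces exactly $\frac{2j-w+1}{2w}$, which is the asserted identity.

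I expect the only genuine obstacle to be the bookkeeping in this last computation: one must consistently use the $X_{v,w-1}$-version of $\sigma'$ and of $N$ (with $w$ summands) and the normalization $\mathfrak p=\frac1w[\,\cdot\,]$ in that bidegree, and one must justify the vanishing of the $d'$-part, which rests on $d'\xcirc N=N\xcirc d$ together with $\ima N=\ker(\ide-t)$. This is entirely analogous to the collapse of the term $\mathfrak p\xcirc d\xcirc\sigma'\xcirc d\xcirc\ov N$ onto $-\mathfrak p\xcirc\varrho_{n-1}\xcirc\sigma'\xcirc\varrho_n\xcirc\ov N$ carried out in the proof of Proposition~\ref{pr4.5}, and involves no new ideas.
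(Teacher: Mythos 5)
Your proposal is correct and follows essentially the same route as the paper: apply $\Psi$ to the claimed right-hand sides, compare with the explicit formulas for $P$ (equalities~\eqref{e1},~\eqref{e2} and Corollary~\ref{cor4.4}), and reduce the $w>0$ case to a formula for $\Upsilon\xcirc\ov N([\bx])$. The only divergence is cosmetic: the paper evaluates $\ov N\xcirc\mathfrak p\xcirc\sigma'\xcirc d$ on $t$-invariants via Lemma~B.4 and then splits $d=d'+\varrho_n$, while you split $d=d'+\varrho_n$ first and observe that $-\ide+\ov N\xcirc\mathfrak p$ annihilates the $t$-invariant element $d'(N(\bx))=N(d(\bx))$; the two computations are the same bookkeeping arranged in a different order, and both land on the coefficient identity $\frac{2j-w+1}{2w}$.
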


\begin{proof} See Appendix~B.
\end{proof}

\begin{remark} Let $M$ be an associative nonunital $k$-algebra. Let $E$ be the augmented algebra obtained adjoin the unit of $k$ to $M$. In \cite{C-Q2} the authors introduced and studied the the harmonic decomposition of the mixed complex of $E$. Applying Theorem~\ref{th4.7} to $E$ we obtain a complex isomorphic to the harmonic part of of this decomposition.
\end{remark}

\setcounter{secnumdepth}{0}
\setcounter{section}{1}
\setcounter{theorem}{0}
\section{Appendix A}
\renewcommand\thesection{\Alph{section}}
\renewcommand\sectionmark[1]{}
This appendix is devoted to prove Theorem~\ref{th3.2}.

\smallskip

Let $E := A\ltimes_{\!\smalltriangledown} M$ be a cleft extension and let $K$ be a $k$-subalgebra of $A$. The $w$-th column $(\hat{\mathfrak{X}}_{*w},\hat{\mathfrak{b}})$ of the double complex $(\hat{\mathfrak{X}}, \hat{\mathfrak{b}},\hat{\mathfrak{d}})$ introduced at the beginning of Section~\ref{relative} is the total complex of the double complex
$$
(\mathfrak{X}_w{,\hspace{1pt}}\mathfrak{b}{,\hspace{1pt}}\alpha) :=
\begin{tikzcd}[column sep=1.2cm,row sep=0.8cm]
&  \vdots \arrow{d}{\mathfrak{b}^0}& \vdots \arrow{d}{\mathfrak{b}^1}\\
& \mathfrak{X}^0_{2w} \arrow{d}{\mathfrak{b}^0} & \arrow{l}[swap]{\alpha}  \mathfrak{X}^1_{2w} \arrow{d}{\mathfrak{b}^1}\\
& \mathfrak{X}^0_{1w} \arrow{d}{\mathfrak{b}^0} & \arrow{l}[swap]{\alpha} \mathfrak{X}^1_{1w} \arrow{d}{\mathfrak{b}^1}\\
&  \mathfrak{X}^0_{0w} & \arrow{l}[swap]{\alpha} \mathfrak{X}^1_{0w},
\end{tikzcd}
$$
where $\mathfrak{X}^0_{vw} := M\ot B_w^n\ot$ and $\mathfrak{X}^1_{v-1,w} := A\ot B_{w+1}^n\ot$.

\smallskip

Lemmas~\eqref{leA.1} and \eqref{leA.2} below was proven in \cite[Appendix~A]{G-G}.

\begin{lemma}\label{leA.1} Let $\theta^1\colon (\mathfrak{X}^1_{*w},\mathfrak{b}^1)\longrightarrow (X_{*w},-b)$ and $\vartheta^1 \colon (X_{*w},-b)\longrightarrow (\mathfrak{X}^1_{*w}, \mathfrak{b}^1)$ be the morphisms of complexes given by
\[
\theta^1(\byy_0^{n+1}) := \mu_0^M(\byy_0^{n+1})\qquad\text{and}\qquad \vartheta^1(\byy_0^n) := \sum_{l=0}^{n-i(\byy_0^n)} 1\ot \mathfrak{t}^l(\byy_0^n),
\]
where $\mathfrak{t}(y_0\ot\cdots\ot y_n) := (-1)^n y_n\ot y_0\ot\cdots\ot y_{n-1}$. Then, $\theta^1\xcirc \vartheta^1 = \ide$ and $\vartheta^1\xcirc \theta^1$ is homotopic to $\ide$. A homotopy is the family of maps $\ep_{*w}\colon \mathfrak{X}^1_{*-1,w}\longrightarrow \mathfrak{X}^1_{*w}$, defined by
\[
\ep(\byy_0^n) := -\sum_{l=0}^{n-i(\byy_0^n)} 1\ot \mathfrak{t}^l(\byy_0^n).
\]
\end{lemma}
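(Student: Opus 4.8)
The plan is to prove the statement by a simplicial‑style argument organised around the operator $s(\bz):=1\ot\bz$ that inserts the central unit into the first slot; this is precisely the computation carried out in \cite{G-G}*{Appendix~A}, which I sketch here. Well‑definedness and the degree bookkeeping are immediate: prepending $1$ and cyclically rotating preserve the number of tensor factors lying in $M$, so $\vartheta^1$ and $\ep$ land in the stated modules, while $\theta^1=\mu_0^M$ lowers that number by one — it annihilates any tensor whose factor after $y_0$ lies in $\ov A$, since then $\pi_M(y_0y_1)=0$ — so $\theta^1$ maps $\mathfrak X^1_{*w}$ into $X_{*w}$.

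To see that $\theta^1$ and $\vartheta^1$ are morphisms of complexes I would use the ``extra degeneracy'' identities for $s$: $\mu_0^A\xcirc s=\ide$ on tensors whose restored first factor lies in $A$ (and $=0$ when it lies in $M$), $\mu_j\xcirc s=-s\xcirc\mu_{j-1}$ for the interior faces $j\ge1$, together with the usual compatibility of the rotation $\mathfrak t$ with the faces $\mu_j$. Summing these relations over the rotation index $l$ occurring in $\vartheta^1$ gives $\mathfrak b^1\xcirc\vartheta^1=\vartheta^1\xcirc(-b)$, and an analogous, shorter computation gives $(-b)\xcirc\theta^1=\theta^1\xcirc\mathfrak b^1$.

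The identity $\theta^1\xcirc\vartheta^1=\ide$ is then immediate: for a simple tensor $\byy_0^n\in X_{*w}$ with $i:=i(\byy_0^n)$, in each summand $1\ot\mathfrak t^l(\byy_0^n)$ with $1\le l\le n-i$ the factor immediately following the inserted unit is, up to sign, $y_{n-l+1}$, which lies in $\ov A$ because $n-l+1>i$; hence $\mu_0^M$ kills it, while the remaining term $l=0$ gives $\mu_0^M(1\ot\byy_0^n)=\pi_M(1\cdot y_0)\ot\byy_1^n=\byy_0^n$ since $y_0\in M$.

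It remains to check the homotopy identity $\mathfrak b^1\xcirc\ep+\ep\xcirc\mathfrak b^1=\ide-\vartheta^1\xcirc\theta^1$. I would expand both composites with the definitions of $\mathfrak b^1$ and $\ep$, obtaining two double sums indexed by the rotation parameter $l$ and the face index $j$; by the extra‑degeneracy identities the interior terms cancel in pairs between the two sums, the sums over $l$ telescope, and the surviving boundary terms are exactly $\ide$ (from the face $\mu_0^A$ deleting the inserted unit) together with $-\vartheta^1\xcirc\theta^1$. The main obstacle is precisely this last bookkeeping: controlling the signs $(-1)^n$ produced by each application of $\mathfrak t$, the case distinctions forced by the projections $\pi_A$ and $\pi_M$ that enter $\mu_0^A$, $\mu_n^A$ and $\mu_0^M$, and — most delicately — the way the summation range $0\le l\le n-i(\,\cdot\,)$ transforms under the faces, since $i(\,\cdot\,)$ shifts whenever a trailing factor is rotated to the front or merged with its neighbour. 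Carrying this out in full is the content of \cite{G-G}*{Appendix~A}.
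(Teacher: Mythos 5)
The paper itself offers no proof of this lemma beyond the citation to \cite{G-G}*{Appendix~A}, which you also invoke; your sketch correctly reconstructs the extra--degeneracy and telescoping structure of that argument, and the explicit check that $\theta^1\circ\vartheta^1=\ide$ (the $l=0$ term survives because $y_0\in M$, the $l\ge 1$ terms are killed because the rotated-to-front factor $y_{n-l+1}$ lies in $\ov A$) is sound.
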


\begin{lemma}\label{leA.2} For $w\ge 0$, let $\tau_0^1(\mathcal{X}_{*w})$ be the double diagram with two columns
$$
\begin{tikzcd}[column sep=1.2cm,row sep=0.8cm]
(X_{*w},b)& (X_{*w},-b).  \arrow{l}[swap]{\ide-t}
\end{tikzcd}
$$
The following assertions hold:

\begin{enumerate}

\smallskip

\item $\tau_0^1(\mathcal{X}_{*w})$ is a double complex.

\smallskip

\item The map $\vartheta\colon \tau_0^1(\mathcal{X}_{*w}) \longrightarrow (\mathfrak{X}_{*w},\mathfrak{b},\alpha)$, defined by $\vartheta :=(\vartheta^0, \vartheta^1)$, where $\vartheta^0$ is the identity map and $\vartheta^1$ is as in Lemma~\ref{leA.1}, is a morphism of double complexes.

\smallskip

\item The map $\hat{\theta}\colon (\hat{\mathfrak{X}}_{*w},\hat{\mathfrak{b}})\longrightarrow \Tot(\tau_0^1(\mathcal{X}_{*w}))$, defined by
\[
\quad\qquad\hat{\theta}(\bx_0^n,\byy_0^n) := (\bx_0^n + t(\byy_0^n),\mu_0^M(\byy_0^n)),
\]
is a morphism of complexes.

\smallskip

\item Let $\hat{\vartheta}\colon \Tot(\tau_0^1(\mathcal{X}_{*w})) \longrightarrow (\hat{\mathfrak{X}}_{*w},\hat{\mathfrak{b}})$ be the map induced by $\vartheta$. It is true that $\hat{\theta}\xcirc \hat{\vartheta} = \ide$ and that $\hat{\vartheta}\xcirc \hat{\theta}$ is homotopic to the identity map. A homotopy is the family of maps
\[
\quad\qquad \hat{\ep}_{v+1,w}\colon \mathfrak{X}^0_{vw}\oplus \mathfrak{X}^1_{v-1,w}\longrightarrow \mathfrak{X}^0_{v+1,w} \oplus \mathfrak{X}^1_{vw}\qquad (v\ge 0),
\]
defined by $\hat{\ep}(\bx_0^n,\byy_0^n):= (0,\ep(\byy_0^n))$, where $\ep$ is the homotopy introduced in Lemma~\ref{leA.1}.
\end{enumerate}
\end{lemma}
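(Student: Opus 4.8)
The plan is to handle the four items in order, leaning on Lemma~\ref{leA.1} together with a few explicit face identities; essentially all the real work is in item~(3). For item~(1): $(-b)^2=b^2=0$ is the usual Hochschild relation, since $b$ restricted to the bigraded summand $X_{*w}$ is the Hochschild boundary in the normalized complex of the trivial extension $A\ltimes_0 M$, whose multiplication restricts on letters in $A\cup M$ to the concatenation of Notation~\ref{not1.2}(3); so the only point is the anticommutativity $b\xcirc(\ide-t)+(\ide-t)\xcirc(-b)=0$ on $X_{*w}$, equivalently $b\xcirc t=t\xcirc b$. I would prove this directly: on $X_{vw}$ the operator $t$ cyclically moves the block $\bx_i^n$ of $\bx_0^n$ (where $i=i(\bx_0^n)$, so $x_i\in M$ and $x_{i+1},\dots,x_n\in\ov A$) to the front, and under this rotation the faces $\mu_j$ of $b$ correspond in the evident way, the wrap-around face $\mu_n$ matching the face merging $x_{i-1}$ into $x_i$, and so on, while any face that would merge two factors of $M$ vanishes on both sides by Notation~\ref{not1.2}(3). (This is the commutation established in \cite[Appendix~A]{G-G}; it uses $x_0\in M$ and, as we will see, does \emph{not} persist on $\mathfrak{X}^1$.)

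For item~(2), $\vartheta^0=\ide$ is compatible with the vertical differentials ($\mathfrak{b}^0=b$), and $\vartheta^1$ is a chain map $(X_{*w},-b)\to(\mathfrak{X}^1_{*w},\mathfrak{b}^1)$ by Lemma~\ref{leA.1}, so it remains only to see that the horizontal squares commute, i.e.\ $\alpha\xcirc\vartheta^1=\vartheta^0\xcirc(\ide-t)=\ide-t$. I would expand $\alpha\xcirc\vartheta^1(\byy_0^n)=(\mu_0^M+\mu_n^M)\bigl(\sum_{l=0}^{n-i(\byy_0^n)}1\ot\mathfrak{t}^l(\byy_0^n)\bigr)$ and observe that, because $y_0\in M$, all but two of the resulting terms are killed by $\pi_M$: $\mu_0^M$ on the $l=0$ summand returns $\byy_0^n$, and $\mu_n^M$ on the $l=n-i(\byy_0^n)$ summand returns $-t(\byy_0^n)$, the surviving sign being $(-1)^{n(n+1)+1}=-1$. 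Hence $\alpha\xcirc\vartheta^1=\ide-t$, so $\vartheta$ is a morphism of double complexes, and therefore $\hat{\vartheta}=\Tot(\vartheta)$ acts coordinatewise, $\hat{\vartheta}(\bx,\byy)=(\bx,\vartheta^1(\byy))$, and is automatically a morphism of complexes.

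For item~(3), which is the crux, write out $\hat{b}\xcirc\hat{\theta}=\hat{\theta}\xcirc\hat{\mathfrak{b}}$ and compare summands: the $\mathfrak{X}^1$-coordinate reduces to $\mu_0^M\xcirc\mathfrak{b}^1=-b\xcirc\mu_0^M$, the chain-map property of $\theta^1=\mu_0^M$ from Lemma~\ref{leA.1}, while the $\mathfrak{X}^0$-coordinate (after cancelling the $\mathfrak{b}^0=b$ contributions) becomes the single identity
\[
b\xcirc t+(\ide-t)\xcirc\mu_0^M=\alpha+t\xcirc\mathfrak{b}^1\qquad\text{on }\mathfrak{X}^1_{*-1,w}.
\]
This is the step I expect to be the main obstacle. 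One cannot shortcut it via $b\xcirc t=t\xcirc b$, because that commutation \emph{fails} on $\mathfrak{X}^1$ (whose tensors begin in $A$, not in $M$). Instead I would verify the displayed identity by a direct face-by-face computation on a very simple tensor $\byy_0^n$ with $y_0\in A$: expand every operator into its faces $\mu_j,\mu_j^A,\mu_j^M$, use the splittings $\mu_0=\mu_0^A+\mu_0^M$ and $\mu_n=\mu_n^A+\mu_n^M$ (so that $\mathfrak{b}^1+\alpha$ exhausts the faces of the relevant Hochschild boundary), determine which faces of $b\xcirc t$ carry $y_0$ into an interior $\ov A$-slot and which carry it onto the leftmost slot, and cancel against $\alpha+t\xcirc\mathfrak{b}^1$. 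The bookkeeping of signs and of the position of the last $M$-factor is the only genuine difficulty; once it is done, $\hat{\theta}$ is a morphism of complexes.

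For item~(4): by item~(2), $\hat{\theta}\xcirc\hat{\vartheta}(\bx,\byy)=\bigl(\bx+t\xcirc\vartheta^1(\byy),\ \mu_0^M\xcirc\vartheta^1(\byy)\bigr)$; the second coordinate is $\byy$ because $\theta^1\xcirc\vartheta^1=\ide$ (Lemma~\ref{leA.1}), and the first is $\bx$ because $t\xcirc\vartheta^1=0$: each summand of $\vartheta^1(\byy)$ carries the unit $1_E\in K$ in its leftmost slot and has a factor of $M$ further along, so $t$ is a non-trivial rotation that pushes $1_E$ into an interior slot, where the tensor vanishes in $\ov E=E/K$; hence $\hat{\theta}\xcirc\hat{\vartheta}=\ide$. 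For the homotopy, $\hat{\vartheta}\xcirc\hat{\theta}(\bx,\byy)=\bigl(\bx+t(\byy),\ \vartheta^1\xcirc\theta^1(\byy)\bigr)$, and I would check $\hat{\mathfrak{b}}\xcirc\hat{\ep}+\hat{\ep}\xcirc\hat{\mathfrak{b}}=\hat{\vartheta}\xcirc\hat{\theta}-\ide$ coordinatewise: the $\mathfrak{X}^1$-coordinate is the homotopy relation $\mathfrak{b}^1\xcirc\ep+\ep\xcirc\mathfrak{b}^1=\vartheta^1\xcirc\theta^1-\ide$ of Lemma~\ref{leA.1}, and the $\mathfrak{X}^0$-coordinate is $\alpha\xcirc\ep=t$ on $\mathfrak{X}^1_{*-1,w}$, which follows from the same face-killing computation as in item~(2): now $\byy$ begins in $A$, so $\mu_0^M$ annihilates every summand of $\ep(\byy)$ and only $\mu_n^M$ on the last summand survives, yielding $-(-t(\byy))=t(\byy)$ after the overall minus sign in the definition of $\ep$. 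This completes the plan.
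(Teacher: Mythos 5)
The paper gives no proof of Lemma~\ref{leA.2}: it is cited verbatim to \cite[Appendix~A]{G-G}, so there is nothing in this text to compare against line by line. As a blind reconstruction your plan is structurally sound and correctly organized. Item~(1) reduces, as you say, to $b\xcirc t = t\xcirc b$ on $X_{*w}$ (the sign in the anticommutation relation works out because the right column carries $-b$), and that commutation indeed holds because tensors in $X_{vw}$ begin in $M$. In item~(2) the verification $\alpha\xcirc\vartheta^1=\ide-t$ is correct: for every $l>0$ the leftmost non-unit factor of $1\ot\mathfrak{t}^l(\byy_0^n)$ lies in $\ov{A}$, so $\mu_0^M$ only sees $l=0$, while $\mu_{n+1}^M$ only sees $l=n-i(\byy_0^n)$ because that is the unique term whose rightmost factor lies in $M$; your sign $(-1)^{n(n+1)+1}=-1$ is right. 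In item~(4) the two facts $t\xcirc\vartheta^1=0$ (the unit ends up in an interior $\ov{E}$-slot) and $\alpha\xcirc\ep=t$ are both correct and proved the same way. In item~(3) you have extracted precisely the right reduction: the $\mathfrak{X}^1$-coordinate is Lemma~\ref{leA.1}, and the $\mathfrak{X}^0$-coordinate collapses to
\[
b\xcirc t+(\ide-t)\xcirc\mu_0^M=\alpha+t\xcirc\mathfrak{b}^1\qquad\text{on }\mathfrak{X}^1_{v-1,w},
\]
and you are right that one cannot simply invoke $b\xcirc t=t\xcirc b$ here because tensors in $\mathfrak{X}^1$ begin in $A$, not in $M$ (this is exactly the reason $\hat{\theta}$ has the twisted form that it does). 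I checked this identity on low-degree cases and it holds.

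The one place where your write-up is a plan rather than a proof is exactly that face-by-face verification in item~(3): you describe the strategy (split $\mu_0$, $\mu_n$ into their $A$- and $M$-parts so that $\mathfrak{b}^1+\alpha$ accounts for the full Hochschild boundary, then track which faces of $b\xcirc t$ produce a tensor starting in $M$ and which do not) but you do not execute the sign- and index-bookkeeping. Since that is the only genuinely nontrivial computation in the lemma, a complete proof would need it spelled out, for instance by separating the cases $i(\byy_0^n)=n$ and $i(\byy_0^n)<n$ and matching summands one by one. Everything else is correct and, as far as I can tell, matches the argument the paper outsources to \cite[Appendix~A]{G-G}.
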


Recall from the proof of Lemma~\ref{th3.1} that $(\breve{X},\breve{b}):= \Tot(\hat{X},\hat{b},\hat{d})$ and from the beginning of this section that $(\breve{\mathfrak{X}},\breve{\mathfrak{b}}) := \Tot(\hat{\mathfrak{X}}, \hat{\mathfrak{b}},\hat{\mathfrak{d}})$. The first item of the following lemma is part of item~(1) of Theorem~\ref{th3.2}.

\begin{lemma}\label{leA.3} The following assertions hold:

\smallskip

\begin{enumerate}

\item The diagram $(\hat{X},\hat{b},\hat{d})$, introduced above Theorem~\ref{th3.2}, is a double complex.

\smallskip

\item The map $\hat{\vartheta}\colon (\hat{X},\hat{b},\hat{d})\longrightarrow (\hat{\mathfrak{X}},\hat{\mathfrak{b}}, \hat{\mathfrak{d}})$ is a morphism of double complexes.

\smallskip

\item The map $\hat{\theta}\colon (\hat{\mathfrak{X}},\hat{\mathfrak{b}},\hat{\mathfrak{d}})\longrightarrow (\hat{X},\hat{b},\hat{d})$ is a morphism of double complexes.

\smallskip

\item Let $\breve{\vartheta}\colon (\breve{X},\breve{b}) \longrightarrow (\breve{\mathfrak{X}}, \breve{\mathfrak{b}})$ and $\breve{\theta}\colon (\breve{\mathfrak{X}},\breve{\mathfrak{b}}) \longrightarrow (\breve{X},\breve{b})$ be the maps induced by $\hat{\vartheta}$ and $\hat{\theta}$, respectively.  It is true that $\breve{\theta} \xcirc \breve{\vartheta} = \ide$ and that $\breve{\vartheta}\xcirc \breve{\theta}$ is homotopic to the identity map. A homotopy is the family of maps $\breve{\ep}_{n+1}\colon \breve{\mathfrak{X}}_n \longrightarrow \breve{\mathfrak{X}}_{n+1}$, defined by $\breve{\ep}_{n+1} := \bigoplus_{w=0}^n \hat{\ep}_{n+1-w,w}$, where $\hat{\ep}_{n+1-w,w}$ is as in Lemma~\ref{leA.2}.
\end{enumerate}
\end{lemma}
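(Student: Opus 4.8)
The plan is to bootstrap everything from the column-wise statements of Lemmas~\ref{leA.1} and~\ref{leA.2}, together with a handful of elementary identities between the operators $b$, $d$, $d'$, $t$ and $N$ on the modules $X_{vw} = M\ot B_w^n\ot$. Recall that the $w$-th column of $(\hat{X},\hat{b},\hat{d})$ is $\Tot(\tau_0^1(\mathcal{X}_{*w}))$ and the $w$-th column of $(\hat{\mathfrak{X}},\hat{\mathfrak{b}},\hat{\mathfrak{d}})$ is $\Tot(\mathfrak{X}_w,\mathfrak{b},\alpha)$; under these identifications the restrictions of the global maps $\hat{\vartheta}$, $\hat{\theta}$, $\hat{\ep}$ to a fixed column are exactly the maps produced in Lemma~\ref{leA.2}. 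Hence every assertion that involves only the vertical differentials $\hat{b}$, $\hat{\mathfrak{b}}$ is Lemma~\ref{leA.2} read column by column; all that has to be added is compatibility with the horizontal differentials $\hat{d}$, $\hat{\mathfrak{d}}$.

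\emph{Step 1: the intertwining identities.} First I would establish, by direct computation on very simple tensors,
\[
\hat{\mathfrak{b}}\xcirc\hat{\vartheta} = \hat{\vartheta}\xcirc\hat{b},\qquad \hat{\mathfrak{d}}\xcirc\hat{\vartheta} = \hat{\vartheta}\xcirc\hat{d},\qquad \hat{b}\xcirc\hat{\theta} = \hat{\theta}\xcirc\hat{\mathfrak{b}},\qquad \hat{d}\xcirc\hat{\theta} = \hat{\theta}\xcirc\hat{\mathfrak{d}}.
\]
The first and third are Lemma~\ref{leA.2}(2)--(4) in each column. For the second, writing $\hat{\vartheta}(\bx,\byy) = \sum_l(\bx,1\ot\mathfrak{t}^l(\byy))$ and $\hat{d}(\bx,\byy) = (d(\bx),-d'(\byy))$, one is reduced to checking that $\vartheta^1$ intertwines $d'$ on $X_{*w}$ with the horizontal differential of $\mathfrak{X}^1_{*w}$ (with the same sign by which $\vartheta^1$ intertwines $-b$ with $\mathfrak{b}^1$): a bar-type identity, proved by pushing a $\varrho_j$ past the cyclic rotation $\mathfrak{t}$. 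For the fourth, with $\hat{\theta}(\bx,\byy) = (\bx + t(\byy),\mu_0^M(\byy))$, the summand $t(\byy)$ is handled using $d\xcirc(\ide-t) = (\ide-t)\xcirc d'$, and the summand $\mu_0^M(\byy)$ by a short computation relating $\mu_0^M$ to the $\varrho_j$. All of this is bookkeeping on simple tensors.

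\emph{Step 2: item (1).} Given Step~1 this is essentially free. By Lemma~\ref{leA.2}(4) in each column, $\hat{\theta}\xcirc\hat{\vartheta} = \ide$, so $\hat{\vartheta}$ is injective. Since $(\hat{\mathfrak{X}},\hat{\mathfrak{b}},\hat{\mathfrak{d}})$ is a double complex, the intertwining identities give $\hat{\vartheta}\xcirc\hat{b}\xcirc\hat{b} = \hat{\mathfrak{b}}\xcirc\hat{\mathfrak{b}}\xcirc\hat{\vartheta} = 0$, and likewise $\hat{\vartheta}\xcirc\hat{d}\xcirc\hat{d} = 0$ and $\hat{\vartheta}\xcirc(\hat{b}\xcirc\hat{d}+\hat{d}\xcirc\hat{b}) = 0$; injectivity of $\hat{\vartheta}$ then yields $\hat{b}^2 = \hat{d}^2 = \hat{b}\hat{d}+\hat{d}\hat{b} = 0$. (Equivalently one may check these three relations head-on: they reduce to $b^2 = d^2 = d'^2 = 0$, $t\xcirc b = b\xcirc t$, $b\xcirc d + d\xcirc b = b\xcirc d' + d'\xcirc b = 0$ and $d\xcirc(\ide-t) = (\ide-t)\xcirc d'$ on the $X_{vw}$ --- the identities recorded after Theorem~\ref{th3.1} --- each of which follows directly from the definitions of the $\mu_j$, $\varrho_j$ and the associativity of $\smalltriangledown$.) With item~(1) in place, the identities of Step~1 are precisely items~(2) and~(3).

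\emph{Step 3: item (4) and the main obstacle.} The equality $\breve{\theta}\xcirc\breve{\vartheta} = \ide$ follows at once by summing $\hat{\theta}\xcirc\hat{\vartheta} = \ide$ over the columns. For $\breve{\vartheta}\xcirc\breve{\theta}\simeq\ide$ I would take $\breve{\ep} := \bigoplus_w\hat{\ep}_{*w}$ (the map in the statement) and verify $\breve{\vartheta}\xcirc\breve{\theta} - \ide = \breve{\mathfrak{b}}\xcirc\breve{\ep} + \breve{\ep}\xcirc\breve{\mathfrak{b}}$ directly, splitting $\breve{\mathfrak{b}} = \hat{\mathfrak{b}} + \hat{\mathfrak{d}}$. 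The part $\hat{\mathfrak{b}}\xcirc\breve{\ep} + \breve{\ep}\xcirc\hat{\mathfrak{b}}$ is, in each column, the homotopy identity for $\hat{\vartheta}\xcirc\hat{\theta} - \ide$ of Lemma~\ref{leA.2}(4), and summing gives $\breve{\vartheta}\xcirc\breve{\theta} - \ide$; so the whole matter comes down to showing that the remaining cross term $\hat{\mathfrak{d}}\xcirc\breve{\ep} + \breve{\ep}\xcirc\hat{\mathfrak{d}}$ cancels. Since $\hat{\ep}(\bx,\byy) = (0,\ep(\byy))$ is supported on the $\mathfrak{X}^1$-summand through $\ep = -\sum_l 1\ot\mathfrak{t}^l(-)$, while $\hat{\mathfrak{d}}$ acts on that summand through the face maps $\varrho_j$, this reduces to checking that the $\varrho_j$ interact compatibly with the cyclic symmetrization defining $\ep$. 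This last verification --- and the closely related $\hat{d}$-compatibilities in Step~1 --- is the only part that demands genuine care rather than mechanical expansion: one has to track precisely the signs produced by $\mathfrak{t}$, between which factors an inserted $\smalltriangledown$-product sits after a rotation, and how the summation range $n - i(\byy)$ shifts when a $\varrho_j$ is applied. Everything else reduces to standard Hochschild/bar-complex manipulations on simple tensors.
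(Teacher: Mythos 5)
Your proposal is correct, but it takes a genuinely different route from the paper's. The paper proves all four items in one shot by invoking the perturbation lemma: the column-wise special deformation retract of Lemma~\ref{leA.2} is the starting datum, $\hat{\mathfrak{d}} := \bigoplus_w \hat{\mathfrak{d}}_{*w}$ is the small perturbation (locally nilpotent because it lowers $w$), and the perturbed datum $(\breve{X},\ov{b}) \leftrightarrows (\breve{\mathfrak{X}},\breve{\mathfrak{b}})$ with maps $\ov{\vartheta},\ov{\theta},\ov{\ep}$ is automatically a special deformation retract. The only direct computations the paper carries out are the four composite identities $\hat{\theta}\xcirc\hat{\mathfrak{d}}\xcirc\hat{\vartheta}=\hat{d}$, $\hat{\ep}\xcirc\hat{\mathfrak{d}}\xcirc\hat{\vartheta}=0$, $\hat{\theta}\xcirc\hat{\mathfrak{d}}\xcirc\hat{\ep}=0$, $\hat{\ep}\xcirc\hat{\mathfrak{d}}\xcirc\hat{\ep}=0$, which force $\ov{b}=\breve{b}$, $\ov{\vartheta}=\breve{\vartheta}$, $\ov{\theta}=\breve{\theta}$, $\ov{\ep}=\breve{\ep}$; items (1)--(4) then drop out without further argument. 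You instead verify the intertwining identities $\hat{\mathfrak{d}}\xcirc\hat{\vartheta}=\hat{\vartheta}\xcirc\hat{d}$, $\hat{d}\xcirc\hat{\theta}=\hat{\theta}\xcirc\hat{\mathfrak{d}}$ together with the graded anticommutation $\hat{\mathfrak{d}}\xcirc\hat{\ep}+\hat{\ep}\xcirc\hat{\mathfrak{d}}=0$, and then bootstrap item~(1) from injectivity of $\hat{\vartheta}$ and item~(4) by splitting $\breve{\mathfrak{b}}=\hat{\mathfrak{b}}+\hat{\mathfrak{d}}$. These two sets of identities are in fact equivalent modulo the special-retract relations $\hat{\theta}\xcirc\hat{\vartheta}=\ide$, $\hat{\theta}\xcirc\hat{\ep}=0$, $\hat{\ep}\xcirc\hat{\vartheta}=0$, $\hat{\ep}\xcirc\hat{\ep}=0$ of Lemma~\ref{leA.2} (each set implies the other by short computations), so the sign-chasing work you correctly flag at the end lands in the same place under either packaging. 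The paper's route buys automatic delivery of all four conclusions from the perturbation machinery; yours buys independence from that formalism at the cost of the separate bootstrapping steps.
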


\begin{proof} By Lemma~\ref{leA.2} we have the following special deformation retract
$$
\begin{tikzpicture}[label distance=5mm]
\draw (0,-0.1) node {$\displaystyle{\bigoplus_{w\ge 0}} (\hat{X}_{*w},\hat{b})$};\draw (4.5,-0.1) node {$\displaystyle{\bigoplus_{w\ge 0}} (\hat{\mathfrak{X}}_{*w}, \hat{\mathfrak{b}})$,};\draw[<-] (1,0.12) -- node[above=-2pt,font=\scriptsize] {$\hat{\theta}$}(3.4,0.12);\draw[->] (1,-0.12) -- node[below=-2pt,font=\scriptsize] {$\hat{\vartheta}$}(3.4,-0.12);\draw (7,0) node {$\hat{\ep}$,};
\end{tikzpicture}
$$
where $\hat{\vartheta} := \bigoplus_{w\ge 0} \hat{\vartheta}_{*w}$, $\hat{\theta} := \bigoplus_{w\ge 0} \hat{\theta}_{*w}$ and $\hat{\ep} := \bigoplus_{w\ge 0} \hat{\ep}_{*w}$. Consider the perturbation $\hat{\mathfrak{d}} := \bigoplus_{w\ge 0} \hat{\mathfrak{d}}_{*w}$. Applying the perturbation lemma to this datum, we obtain a special deformation retract
$$
\begin{tikzpicture}[label distance=5mm]
\draw (0,0) node {$(\breve{X},\ov{b})$};\draw (4,0) node {$(\breve{\mathfrak{X}},\breve{\mathfrak{b}})$,};\draw[<-] (0.6,0.12) -- node[above=-2pt,font=\scriptsize] {$\overline{\theta}$}(3.4,0.12);\draw[->] (0.6,-0.12) -- node[below=-2pt,font=\scriptsize] {$\overline{\vartheta}$}(3.4,-0.12);\draw (7,0) node {$\ov{\ep}_{*+1}\colon \breve{\mathfrak{X}}_*\longrightarrow \breve{\mathfrak{X}}_{*+1}$.};
\end{tikzpicture}
$$
To finish the proof it remains to check that $\ov{b} = \breve{b}$, $\ov{\vartheta} = \breve{\vartheta}$, $\ov{\theta} = \breve{\theta}$ and $\ov{\ep} = \breve{\ep}$, for which it suffices to check that
$$
\hat{\theta}\xcirc \hat{\mathfrak{d}}\xcirc \hat{\vartheta} = \hat{d},\qquad \hat{\ep}\xcirc \hat{\mathfrak{d}}\xcirc \hat{\vartheta}= 0,\qquad \hat{\theta}\xcirc \hat{\mathfrak{d}}\xcirc\hat{\ep}= 0\qquad\text{and}\qquad \hat{\ep}\xcirc \hat{\mathfrak{d}}\xcirc \hat{\ep} = 0,
$$
which follows by a direct computation.
\end{proof}

\begin{lemma}\label{leA.4} Let $\hat{B}$ be as in Theorem~\ref{th3.2}. The following assertions hold:
\begin{enumerate}

\smallskip

\item $(\hat{X},\hat{b},\hat{d},\hat{B})$ is a double mixed complex.

\smallskip

\item The maps
\[\qquad\quad
\hat{\vartheta}\colon (\hat{X},\hat{b},\hat{d},\hat{B}) \longrightarrow (\hat{\mathfrak{X}},\hat{\mathfrak{b}}, \hat{\mathfrak{d}},\hat{\mathfrak{B}}) \qquad \text{and}\qquad \hat{\theta}\colon (\hat{\mathfrak{X}}, \hat{\mathfrak{b}},\hat{\mathfrak{d}},\hat{\mathfrak{B}}) \longrightarrow (\hat{X},\hat{b},\hat{d},\hat{B})
\]
are morphisms of double mixed complexes.

\smallskip

\item For $W$ equals $C$, $N$ and $P$, let
\begin{align*}
&\check{\vartheta}\colon \Tot \BW(\breve{X},\breve{b},\breve{B})\longrightarrow \Tot\BW(\breve{\mathfrak{X}},\breve{\mathfrak{b}},\breve{\mathfrak{B}})
\shortintertext{and}
&\check{\theta}\colon \Tot\BW(\breve{\mathfrak{X}},\breve{\mathfrak{b}},\breve{\mathfrak{B}})\longrightarrow \Tot \BW(\breve{X},\breve{b},\breve{B})
\end{align*}
be the maps induced by the morphisms $\breve{\vartheta}$ and $\breve{\theta}$ introduced in item~4) of Lemma~\ref{leA.3}. It is true that $\check{\theta} \xcirc \check{\vartheta} = \ide$ and $\check{\vartheta}\xcirc \check{\theta}$ is homotopic to the identity map. A homotopy is the family of maps
$$
\check{\ep}\colon \Tot\BW(\breve{\mathfrak{X}},\breve{\mathfrak{b}},\breve{\mathfrak{B}})_* \longrightarrow \Tot\BW(\breve{\mathfrak{X}},\breve{\mathfrak{b}},\breve{\mathfrak{B}})_{*+1},
$$
defined by applying $\breve{\ep}$ on each component.
\end{enumerate}
\end{lemma}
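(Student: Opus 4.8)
The plan is to leverage Lemma~\ref{leA.3}, which already handles everything at the level of the $b$-differential, and to bring in the Connes operator by one more application of the perturbation lemma.

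First, for items~(1) and~(2). By Lemma~\ref{leA.3}(1) we already know $(\hat{X},\hat{b},\hat{d})$ is a double complex, so for item~(1) it remains only to check the three relations involving $\hat{B}$. Since $\hat{B}(\bx,\byy)=(0,N(\bx))$ depends on the first coordinate alone and returns an element with vanishing first coordinate, $\hat{B}\xcirc\hat{B}=0$ is automatic; spelling out $\hat{B}\xcirc\hat{b}+\hat{b}\xcirc\hat{B}$ and $\hat{B}\xcirc\hat{d}+\hat{d}\xcirc\hat{B}$ with the explicit formulas reduces these to the elementary identities $N\xcirc(\ide-t)=(\ide-t)\xcirc N=0$, $b\xcirc N=N\xcirc b$ and $d'\xcirc N=N\xcirc d$ on the $X_{vw}$. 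For item~(2), Lemma~\ref{leA.3}(2)--(3) already gives that $\hat{\vartheta}$ and $\hat{\theta}$ commute with $\hat{b}$ and $\hat{d}$, so it suffices to verify $\hat{\mathfrak{B}}\xcirc\hat{\vartheta}=\hat{\vartheta}\xcirc\hat{B}$ and $\hat{\theta}\xcirc\hat{\mathfrak{B}}=\hat{B}\xcirc\hat{\theta}$; these are direct computations with the explicit formulas, the recurring simplification being that $B$ annihilates any normalized chain whose position-$0$ factor lies in $K$ — after the cyclic rotations appearing in $B$ such a factor is pushed into an interior slot, where it vanishes in $E\ot\ov{E}^{\ot^*}\ot$ — which kills every contribution of $B\xcirc\vartheta^1$ and leaves only elementary identities like $\vartheta^1\xcirc N=B$, $Nt=N$ and $\mu_0^M\xcirc B=N$ on the $X_{vw}$. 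In particular $\breve{\vartheta}$ and $\breve{\theta}$ are then morphisms of the associated mixed complexes, so $\breve{\theta}\xcirc\breve{\mathfrak{B}}=\breve{B}\xcirc\breve{\theta}$ and hence $\breve{\theta}\xcirc\breve{\mathfrak{B}}\xcirc\breve{\vartheta}=\breve{B}$.

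Then, for item~(3), fix $\BW$ to be one of $\BC$, $\BN$, $\BP$. Starting from the special deformation retract of Lemma~\ref{leA.3}(4) between $(\breve{X},\breve{b})$ and $(\breve{\mathfrak{X}},\breve{\mathfrak{b}})$ with section $\breve{\vartheta}$, retraction $\breve{\theta}$ and homotopy $\breve{\ep}$, take one copy of it for each power of $u$: this gives a special deformation retract between the total complexes of the bicomplexes obtained from $\BW(\breve{X},\breve{b},\breve{B})$ and $\BW(\breve{\mathfrak{X}},\breve{\mathfrak{b}},\breve{\mathfrak{B}})$ by deleting the Connes differential, with section, retraction and homotopy the componentwise maps $\check{\vartheta}$, $\check{\theta}$ and $\check{\ep}$. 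I would then perturb the larger complex by the Connes differential $\breve{\mathfrak{B}}$. The crucial point is the double vanishing
\[
\breve{\mathfrak{B}}\xcirc\breve{\ep}=0=\breve{\ep}\xcirc\breve{\mathfrak{B}},
\]
which holds because both $\breve{\ep}$ and $\breve{\mathfrak{B}}$ output chains that have just had a copy of the unit $1_E\in K$ inserted at position~$0$, and applying $B$ (respectively $\breve{\ep}$) to such a chain gives $0$, once more because after the relevant rotations that unit lands in an interior slot. Hence $\breve{\mathfrak{B}}\xcirc\check{\ep}=0$, the perturbation is small with $(\ide-\breve{\mathfrak{B}}\xcirc\check{\ep})^{-1}=\ide$ and $A=\breve{\mathfrak{B}}$, and the perturbation-lemma formulas collapse to
\[
\partial^1=\breve{b}+\check{\theta}\xcirc\breve{\mathfrak{B}}\xcirc\check{\vartheta}=\breve{b}+\breve{B},\qquad i^1=\check{\vartheta},\qquad p^1=\check{\theta},\qquad h^1=\check{\ep},
\]
where the first equality uses item~(2) and the last three use $\breve{\mathfrak{B}}\xcirc\breve{\ep}=\breve{\ep}\xcirc\breve{\mathfrak{B}}=0$. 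By Theorem~\ref{lema de perturbacion} this is again a special deformation retract, now between $\Tot\BW(\breve{\mathfrak{X}},\breve{\mathfrak{b}},\breve{\mathfrak{B}})$ and $\Tot\BW(\breve{X},\breve{b},\breve{B})$ and realized by $\check{\vartheta}$, $\check{\theta}$, $\check{\ep}$; in particular $\check{\theta}\xcirc\check{\vartheta}=\ide$ and $\check{\vartheta}\xcirc\check{\theta}$ is homotopic to the identity via $\check{\ep}$.

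The computations in items~(1)--(2) are routine bookkeeping. The main obstacle will be nailing down the double vanishing $\breve{\mathfrak{B}}\xcirc\breve{\ep}=\breve{\ep}\xcirc\breve{\mathfrak{B}}=0$ — and, relatedly, the identities $\vartheta^1\xcirc N=B$ and $\mu_0^M\xcirc B=N$ — since these require carefully tracking where inserted unit factors end up under the cyclic rotations $\mathfrak{t}^l$ defining $\breve{\ep}$ and the rotations defining $B$; it is precisely this vanishing that makes the final perturbation trivially small and causes all perturbation-lemma corrections to disappear.
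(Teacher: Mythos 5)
Your proposal is correct and follows essentially the same strategy as the paper's proof: reduce item~(1) to the identities $b\xcirc N=N\xcirc b$, $d'\xcirc N=N\xcirc d$ (and $N(\ide-t)=(\ide-t)N=0$), then obtain items~(2)--(3) from the special deformation retract of Lemma~\ref{leA.3}(4) via the perturbation lemma, with the crucial observation that $\breve{\mathfrak{B}}\xcirc\breve{\ep}=0=\breve{\ep}\xcirc\breve{\mathfrak{B}}$ (both coming down to the fact that $B$ and $\ep$ annihilate normalized chains whose position-$0$ factor lies in $K$), so that the perturbation corrections vanish and the perturbed retract is realized by $\check{\vartheta},\check{\theta},\check{\ep}$ themselves. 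You are somewhat more explicit than the paper about item~(2), verifying the intertwining relations $\hat{\mathfrak{B}}\xcirc\hat{\vartheta}=\hat{\vartheta}\xcirc\hat{B}$ and $\hat{\theta}\xcirc\hat{\mathfrak{B}}=\hat{B}\xcirc\hat{\theta}$ directly rather than merely invoking $\breve{B}=\breve{\theta}\xcirc\breve{\mathfrak{B}}\xcirc\breve{\vartheta}$, which is a sensible clarification since the perturbation lemma by itself does not yield the compatibility of $\hat\vartheta,\hat\theta$ with the Connes operators.
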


\begin{proof} 1)\enspace From the fact that $\hat{b}\xcirc \hat{b} = 0$ it follows easily that $t\xcirc b = b\xcirc t$. Thus we obtain that $b\xcirc N = N\xcirc b$, which implies that $\hat{b}\xcirc \hat{B} + \hat{B}\xcirc \hat{b} = 0$. To prove that $\hat{d}\xcirc \hat{B} + \hat{B}\xcirc \hat{d} = 0$ we must check that $d'\xcirc N = N\xcirc d$. Let $\bx_0^n\in X_{vw}$ be a very simple tensor. Let $0=i_0<i_1<\cdots<i_w\le n$ be the indices such that $x_{i_j}\in M$ and let $i_{w+1} = n+1$. A direct computation shows that for $0\le l\le n$ and $0\le j<w$,
$$
t^j\xcirc \varrho_l(\bx_0^n) = \begin{cases} \varrho_{l+n+1-i_{w+1-j}}\xcirc t^j(\bx_0^n)&\text{if $l<i_{w+1-j}-1$,}\\ \varrho_{l-i_{w-j}}\xcirc t^{j+1}(\bx_0^n)&\text{if $l\ge i_{w+1-j}-1$.}
\end{cases}
$$
Hence
$$
N\xcirc d(\bx_0^n) = \sum_{j=0}^{w-1} \sum_{l=0}^n t^j\xcirc \varrho_l(\bx_0^n) = \sum_{l=0}^{n-1}\sum_{j=0}^w \varrho_l\xcirc t^j(\bx_0^n) = d'\xcirc N(\bx_0^n),
$$
as we want.

\smallskip

\noindent 2) and 3)\enspace From Lemma~\ref{leA.3} we get a special deformation retract between the total complexes of the double complexes $\BC(\breve{X},\breve{b},0)$ and $\BC(\breve{\mathfrak{X}},\breve{\mathfrak{b}},0)$. Consider the perturbation induced by $\breve{\mathfrak{B}}$. The result it follows by applying the perturbation lemma to this setting, and using that $\breve{B} = \breve{\theta}\xcirc \breve{\mathfrak{B}}\xcirc \breve{\vartheta}$, $\breve{\mathfrak{B}}\xcirc \breve{\ep} = 0$ and $\breve{\ep}\xcirc \breve{\mathfrak{B}} = 0$.
\end{proof}

\noindent{\bf Proof of Theorem~\ref{th3.2}.}\enspace This follows immediately from Lemma~\ref{leA.4}.\qed

\setcounter{section}{2}
\setcounter{theorem}{0}
\section{Appendix B}
Recall from the discussion above Proposition~\ref{pr4.5}, that for each $t$-invariant element $\bx\in X_{vw}$,
$$
\Upsilon(\bx) :=\begin{cases} (-\ide + \ov{N}\xcirc \mathfrak{p})\xcirc \sigma'\xcirc d(\bx)\in X_{v,w-1} &\text{ if $w>0$,}\\ 0 & \text{ if $w = 0$.}\end{cases}
$$
We claim that $\Upsilon(\bx)$ is univocally determined by the following properties:
\[
\Upsilon(\bx)\in \ker(\mathfrak{p})\qquad\text{and}\qquad (\bx,\Upsilon(\bx))\in P(\ddot{X}).
\]
First note that $\mathfrak{p}\xcirc \Upsilon = 0$, since $\mathfrak{p}$ is a retraction of $\ov{N}$. By Remark~\ref{info sobre P(X)}, in order to prove that $(\bx,\Upsilon(\bx))\in P(\ddot{X}_{vw})$ it suffices to see that $d(\bx)+(\ide-t)\xcirc \Upsilon(\bx)$ is $t$-invariant. This is evident for $w=0$ and it is true for $w>0$, since
\begin{equation}
d(\bx)+(\ide-t)\xcirc \Upsilon(\bx) = d(\bx)+(t-\ide)\xcirc \sigma'\xcirc d(\bx) = \frac{1}{w} N\xcirc d(\bx),\label{*eq17}
\end{equation}
where the first equality follows from the fact that $t\circ \ov{N} = \ov{N}$ and the second one, from equality~\eqref{*eq5}. Conversely, assume that $(\bx,\byy)\in P(\ddot{X}_{vw})$ and let $[\byy - \Upsilon(\bx)]$ denote the class of $\byy - \Upsilon(\bx)$ in $\ov{X}_{v,w+1}$. Again by Remark~\ref{info sobre P(X)}, we know that $\byy - \Upsilon(\bx)$ is $t$-invariant, and so, if $\byy\in \ker(\mathfrak{p})$, then
$$
[\byy - \Upsilon(\bx)] = \mathfrak{p}\xcirc \ov{N}\bigl([\byy - \Upsilon(\bx)]\bigr) = \mathfrak{p}\xcirc N\bigl(\byy - \Upsilon(\bx)\bigr) = w \mathfrak{p} \bigl(\byy - \Upsilon(\bx)\bigr) = 0,
$$
which implies that $\byy = \Upsilon(\bx)$ (note that if $w=0$, then $\byy = \Upsilon(\bx) = 0$). Recall also from Remark~\ref{info sobre P(X)}, that ${}^t\!P(\ddot{X}_{vw})$ is the set of all elements of the shape $(0,\bx)$ that belongs to $P(\ddot{X}_{vw})$, and that
$$
{}^t\!P(\ddot{X}_{vw}) = \{(0,\bx): \bx\in X_{v,w-1}\text{ and } t(\bx) = \bx\} = P(\ddot{X}_{vw})\cap \acute{X}_{v,w-1}.
$$
Let
$$
{}^e\! P(\ddot{X}_{vw}) := \{(\bx,\Upsilon(\bx)) \in \ddot{X}_{vw}: \bx \text{ is $t$-invariant}\}.
$$
Clearly, $P(\ddot{X}) = {}^e\! P(\ddot{X}) \oplus {}^t\!P(\ddot{X})$. We assert that $\ddot{b}({}^e P(\ddot{X}_{vw})) \subseteq {}^e\! P(\ddot{X}_{v-1,w})$, for each $v>0$. In order to prove this we will need the following result.

\begin{lemma}\label{leB.1} For $v>0$, we have $\ov{b}\xcirc \mathfrak{p}=\mathfrak{p}\xcirc b$.
\end{lemma}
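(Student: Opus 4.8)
The plan is to reduce the equality to a single grading-compatibility observation. First I would record that, by its very definition on very simple tensors together with $k$-linearity, the map $\mathfrak{p}\colon X_{vw}\longrightarrow\ov{X}_{vw}$ is simply $\frac{1}{w+1}$ times the canonical projection $q_{vw}\colon X_{vw}\longrightarrow\ov{X}_{vw}$ onto the cokernel of $\ide-t$: the assignment $\bx\mapsto\frac{1}{w+1}[\bx]$, prescribed a priori only on very simple tensors, extends to the $k$-linear map $\frac{1}{w+1}q_{vw}$ precisely because the very simple tensors span $X_{vw}$. Likewise $\mathfrak{p}=\frac{1}{w+1}q_{v-1,w}$ on $X_{v-1,w}$. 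Moreover $\ov{b}$ is, by construction (Theorem~\ref{th3.4}), the map induced by $b$, and since $t\xcirc b=b\xcirc t$ — one of the equalities recorded right after Theorem~\ref{th3.1} — the differential $b$ carries $\ima(\ide-t)\subseteq X_{vw}$ into $\ima(\ide-t)\subseteq X_{v-1,w}$, so that $\ov{b}\xcirc q_{vw}=q_{v-1,w}\xcirc b$. Granting that both projections are normalized by the same scalar $\frac{1}{w+1}$, the identity is then immediate:
\[
\ov{b}\xcirc\mathfrak{p}=\tfrac{1}{w+1}\,\ov{b}\xcirc q_{vw}=\tfrac{1}{w+1}\,q_{v-1,w}\xcirc b=\mathfrak{p}\xcirc b .
\]

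The only point that genuinely requires checking — and where the (mild) work of the proof lies — is that $b$ maps $X_{vw}$ into $X_{v-1,w}$, i.e. that it preserves the second grading index $w$; this is exactly what makes the constant $\frac{1}{w+1}$ coincide in source and target. I would verify it directly from $b=\sum_{j=0}^{n}\mu_j$ and the conventions in item~(3) of Notation~\ref{not1.2}: on a very simple tensor $\bx_0^n$ the summand $\mu_j(\bx_0^n)$ concatenates two cyclically consecutive factors $x_j$, $x_{j+1}$; if both lie in $M$ their concatenation is $0$ and the summand vanishes, whereas in every other case at least one of them lies in $\ov{A}$ and the concatenation $x_jx_{j+1}$ lies in $\ov{A}$ when the remaining factor is in $\ov{A}$ and in $M$ when the remaining factor is in $M$. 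Hence each non-vanishing summand absorbs exactly one $\ov{A}$-factor, leaving the number of tensor factors lying in $M$ unchanged while lowering $n$, and thus $v$, by one; this is precisely the assertion $b\colon X_{vw}\longrightarrow X_{v-1,w}$.

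Finally I would note that for $v=0$ the statement is vacuous, since $X_{0w}$ consists of tensors all of whose factors lie in $M$, so every $\mu_j$ concatenates two $M$-factors and $b$ annihilates $X_{0w}$; hence both sides vanish, and this explains the hypothesis $v>0$. In summary, there is no real obstacle here: the content of the lemma is entirely the grading compatibility of $b$ described in the previous paragraph, after which the proof is the one-line computation displayed above.
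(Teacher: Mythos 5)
Your proof is correct. The paper itself offers no argument beyond ``This is trivial,'' and your elaboration — observing that $\mathfrak{p}$ is $\frac{1}{w+1}$ times the quotient projection, that $\ov{b}$ is induced by $b$ on the cokernel of $\ide-t$ because $t\xcirc b=b\xcirc t$, and that the scalar $\frac{1}{w+1}$ is the same in source and target since $b$ preserves the $w$-grading — is exactly the intended content.
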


\begin{proof} This is trivial.
\end{proof}

\begin{proposition}\label{prB.2} Assume that $v>0$ and let $\bx\in X_{vw}$ be a $t$-invariant element. We
have:
\[
\ddot{b}(\bx,\Upsilon(\bx)) = (b(\bx),\Upsilon\xcirc b(\bx)).
\]
\end{proposition}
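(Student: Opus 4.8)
The plan is to exploit the characterization of $\Upsilon$ recorded immediately before this proposition: for a $t$-invariant element $\bz\in X_{v'w'}$, the element $\Upsilon(\bz)\in X_{v',w'-1}$ is the \emph{unique} one satisfying $\Upsilon(\bz)\in\ker(\mathfrak{p})$ and $(\bz,\Upsilon(\bz))\in P(\ddot{X})$. Since $t\xcirc b = b\xcirc t$, the element $b(\bx)\in X_{v-1,w}$ is again $t$-invariant, so $\Upsilon(b(\bx))$ is defined; and because $\ddot{b}(\bx,\Upsilon(\bx)) = (b(\bx),-b(\Upsilon(\bx)))$, it suffices to show that $-b(\Upsilon(\bx))$ satisfies, with $b(\bx)$ in place of $\bx$, the two defining properties above, namely
\[
-b(\Upsilon(\bx))\in\ker(\mathfrak{p})\qquad\text{and}\qquad (b(\bx),-b(\Upsilon(\bx)))\in P(\ddot{X}).
\]

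For the first property I would invoke Lemma~\ref{leB.1}: since $v>0$, one has $\mathfrak{p}\xcirc b = \ov{b}\xcirc\mathfrak{p}$ on $X_{v,w-1}$, and $\Upsilon(\bx)\in\ker(\mathfrak{p})$, so $\mathfrak{p}(b(\Upsilon(\bx))) = \ov{b}(\mathfrak{p}(\Upsilon(\bx))) = 0$. For the second property I would appeal to Remark~\ref{info sobre P(X)}, by which $(b(\bx),-b(\Upsilon(\bx)))\in P(\ddot{X})$ as soon as both $b(\bx)$ and $d(b(\bx)) + (\ide-t)(-b(\Upsilon(\bx)))$ are $t$-invariant. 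The former has already been noted. For the latter, using $d\xcirc b = -b\xcirc d$ together with $(\ide-t)\xcirc b = b\xcirc(\ide-t)$ (the latter a consequence of $t\xcirc b = b\xcirc t$), one rewrites
\[
d(b(\bx)) + (\ide-t)(-b(\Upsilon(\bx))) = -b\bigl(d(\bx) + (\ide-t)(\Upsilon(\bx))\bigr).
\]
Since $(\bx,\Upsilon(\bx))\in P(\ddot{X})$, Remark~\ref{info sobre P(X)} (or, more explicitly, equality~\eqref{*eq17}, which identifies it with $\tfrac{1}{w}N(d(\bx))$ when $w>0$) tells us that $d(\bx) + (\ide-t)(\Upsilon(\bx))$ is $t$-invariant, and $b$ carries $t$-invariant elements to $t$-invariant elements; hence the right-hand side above is $t$-invariant.

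Combining the two properties with the uniqueness clause of the characterization yields $-b(\Upsilon(\bx)) = \Upsilon(b(\bx))$, and therefore $\ddot{b}(\bx,\Upsilon(\bx)) = (b(\bx),-b(\Upsilon(\bx))) = (b(\bx),\Upsilon(b(\bx)))$, as asserted. I do not anticipate a genuine obstacle: the argument is purely formal. The only points calling for a little care are that Lemma~\ref{leB.1} is applied only in the range $v>0$ where it holds, and that the sign in $d\xcirc b = -b\xcirc d$ is reconciled with $(\ide-t)\xcirc b = b\xcirc(\ide-t)$ when the terms are collected. The case $w=0$ is degenerate, since then $\Upsilon(\bx)=0$ and $X_{v,-1}=0$, and the identity reduces to $(b(\bx),0) = (b(\bx),0)$.
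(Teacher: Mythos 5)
Your proposal is correct and takes essentially the same route as the paper's proof: reduce to the uniqueness characterization of $\Upsilon$, verify $\mathfrak{p}(b(\Upsilon(\bx)))=0$ via Lemma~\ref{leB.1}, and verify membership in $P(\ddot{X})$. The only difference is in the last step: the paper simply observes that $P(\ddot{X})$ is a subcomplex of $(\ddot{X},\ddot{b},\ddot{d})$ and so is stable under $\ddot{b}$, whereas you re-derive this membership concretely by checking the $t$-invariance criterion of Remark~\ref{info sobre P(X)}; both are valid, and the paper's observation makes that step immediate.
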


\begin{proof} By definition
$$
\ddot{b}(\bx,\Upsilon(\bx)) = \bigl(b(\bx),-b(\Upsilon(\bx))\bigr).
$$
Since $b(\bx)$ is $t$-invariant, in order to finish the proof it suffices to check that
\[
\bigl(b(\bx),-b(\Upsilon(\bx))\bigr)\in P(\ddot{X})\qquad\text{and}\qquad \mathfrak{p}(b(\Upsilon(\bx)) = 0.
\]
The first fact follows immediately from the fact that $P(\ddot{X})$ is a subcomplex of $(\ddot{X},\ddot{b},\ddot{d})$ and the second one follows easily from Lemma~\ref{leB.1}.
\end{proof}

For each $w>0$, let
$$
{}^e d\colon {}^e\! P(\ddot{X}_{vw})\to {}^e\! P(\ddot{X}_{v,w-1})\qquad\text{and}\qquad {}^e\xi\colon {}^e\! P(\ddot{X}_{vw})\to {}^t\! P(\ddot{X}_{v,w-1})
$$
be the maps defined by
$$
{}^e d(\bx,\Upsilon(\bx)) + {}^e\xi(\bx,\Upsilon(\bx)) := \ddot{d}(\bx,\Upsilon(\bx)).
$$
We now want to compute these maps. To carry out this task we will need Proposition~\ref{prB.3} below. First note that if $\bx\in X_{vw}$ is a $t$-invariant element, then $d'(\bx)$ also is, since
\begin{equation}
N\xcirc d(\bx) = d'\xcirc N(\bx) = (w+1)d'(\bx).\label{*eq18}
\end{equation}
 For $w\ge 1$, let $\xi\colon {}^t\! X_{vw}\to X_{v,w-2}$ be the map defined by
\[
\xi(\bx) := - \frac{w+1}{w} \Upsilon\xcirc d'(\bx) - d'\xcirc \Upsilon(\bx),
\]
where ${}^t\! X_{vw}$ denotes the set of $t$-invariant elements of $X_{vw}$.

\begin{proposition}\label{prB.3} Assume that $w>0$ and let $\bx\in X_{vw}$ be a $t$-invariant element. Then,
\[
{}^e d(\bx,\Upsilon(\bx)) = \frac{w+1}{w} \bigl(d'(\bx), \Upsilon\xcirc d'(\bx)\bigr)\qquad \text{and}\qquad {}^e \xi(\bx,\Upsilon(\bx)) = (0,\xi(\bx)).
\]
In particular $\xi(\bx)$ is $t$-invariant.
\end{proposition}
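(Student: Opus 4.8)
The plan is to compute $\ddot{d}(\bx,\Upsilon(\bx))$ explicitly and then read off its two components in the decomposition $P(\ddot{X}) = {}^e\!P(\ddot{X})\oplus{}^t\!P(\ddot{X})$, which is all the information that ${}^e d$ and ${}^e\xi$ encode. First I would expand the definition of $\ddot{d}$ to get $\ddot{d}(\bx,\Upsilon(\bx)) = \bigl(d(\bx)+(\ide-t)\xcirc\Upsilon(\bx),\,-d'\xcirc\Upsilon(\bx)\bigr)$, and simplify the first coordinate: by~\eqref{*eq17} it equals $\frac{1}{w}N\xcirc d(\bx)$, and since $\bx$ is $t$-invariant,~\eqref{*eq18} gives $N\xcirc d(\bx)=(w+1)d'(\bx)$, so that $\ddot{d}(\bx,\Upsilon(\bx)) = \bigl(\frac{w+1}{w}d'(\bx),\,-d'\xcirc\Upsilon(\bx)\bigr)$.

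Next I would observe that $d'(\bx)$ is $t$-invariant (again by~\eqref{*eq18}) and that $\Upsilon$ is linear on $t$-invariant elements, being a composite of linear maps, so that $\frac{w+1}{w}\bigl(d'(\bx),\Upsilon\xcirc d'(\bx)\bigr)$ is a genuine element of ${}^e\!P(\ddot{X}_{v,w-1})$. Subtracting it from the identity just obtained leaves
\[
\ddot{d}(\bx,\Upsilon(\bx)) - \frac{w+1}{w}\bigl(d'(\bx),\Upsilon\xcirc d'(\bx)\bigr) = \Bigl(0,\,-d'\xcirc\Upsilon(\bx) - \tfrac{w+1}{w}\Upsilon\xcirc d'(\bx)\Bigr) = (0,\xi(\bx)),
\]
by the very definition of $\xi$.

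Finally, to see that this is genuinely the ${}^t\!P$-component, I would use that $(\bx,\Upsilon(\bx))\in P(\ddot{X}_{vw})$ (because $\bx$ is $t$-invariant, by the description of ${}^e\!P(\ddot{X})$) together with the fact that $P(\ddot{X})$ is a subcomplex of $(\ddot{X},\ddot{b},\ddot{d})$, so that $\ddot{d}(\bx,\Upsilon(\bx))\in P(\ddot{X}_{v,w-1})$; subtracting the element $\frac{w+1}{w}(d'(\bx),\Upsilon\xcirc d'(\bx))$ of ${}^e\!P(\ddot{X}_{v,w-1})\subseteq P(\ddot{X}_{v,w-1})$ shows $(0,\xi(\bx))\in P(\ddot{X}_{v,w-1})\cap\acute{X}_{v,w-2}={}^t\!P(\ddot{X}_{v,w-1})$, and in particular $\xi(\bx)$ is $t$-invariant. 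Since $P(\ddot{X})={}^e\!P(\ddot{X})\oplus{}^t\!P(\ddot{X})$, uniqueness of this decomposition applied to $\ddot{d}(\bx,\Upsilon(\bx)) = \frac{w+1}{w}(d'(\bx),\Upsilon\xcirc d'(\bx)) + (0,\xi(\bx))$ forces ${}^e d(\bx,\Upsilon(\bx))=\frac{w+1}{w}(d'(\bx),\Upsilon\xcirc d'(\bx))$ and ${}^e\xi(\bx,\Upsilon(\bx))=(0,\xi(\bx))$, as claimed. I do not expect a serious obstacle here: the only point needing care is keeping the indices straight when invoking~\eqref{*eq5},~\eqref{*eq17} and~\eqref{*eq18} on $X_{v,w-1}$ rather than on $X_{vw}$, and making sure $\Upsilon$ is applied only to $t$-invariant elements, where its $k$-linearity is available.
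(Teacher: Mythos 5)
Your proposal is correct and follows essentially the same route as the paper: expand $\ddot{d}(\bx,\Upsilon(\bx))$, simplify the first coordinate via~\eqref{*eq17} and~\eqref{*eq18}, split off $\frac{w+1}{w}\bigl(d'(\bx),\Upsilon\xcirc d'(\bx)\bigr)$ as the ${}^e\!P$-component, and use that $P(\ddot{X})$ is a subcomplex to place the remainder $(0,\xi(\bx))$ in ${}^t\!P(\ddot{X})$. The only difference is that you spell out the uniqueness of the decomposition $P(\ddot{X})={}^e\!P(\ddot{X})\oplus{}^t\!P(\ddot{X})$ explicitly, which the paper leaves implicit.
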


\begin{proof} Since $d'(\bx)$ is $t$-invariant,
\begin{equation}
\bigl(d'(\bx),\Upsilon\xcirc d'(\bx)\bigr) \in {}^e\! P(\ddot{X}_{v,w-1}).\label{*eq19}
\end{equation}
Moreover, by equalities~\eqref{*eq17} and~\eqref{*eq18},
\begin{align*}
\ddot{d}(\bx,\Upsilon(\bx)) & = \left(\frac{1}{w} N\xcirc d(\bx),-d'\xcirc \Upsilon(\bx)\right)\\
& = \left(\frac{w+1}{w} d'(\bx),-d'\xcirc \Upsilon(\bx)\right)\\
& = \frac{w+1}{w} \bigl(d'(\bx), \Upsilon\xcirc d'(\bx)\bigr) - \left(0, \frac{w+1}{w} \Upsilon\xcirc d'(\bx) + d'\xcirc \Upsilon(\bx)\right)\\
& = \frac{w+1}{w} \bigl(d'(\bx),\Upsilon\xcirc d'(\bx)\bigr) + (0,\xi(\bx)).
\end{align*}
Consequently in order to finish the proof it suffices to note that
$$
\frac{w+1}{w} \bigl(d'(\bx),\Upsilon\xcirc d'(\bx)\bigr)\in {}^e\! P(\ddot{X}_{v,w-1})\qquad\text{and}\qquad (0,\xi(\bx))\in P(\ddot{X}),
$$
since $P(\ddot{X})$ is a subcomplex of $(\ddot{X},\ddot{b},\ddot{d})$.
\end{proof}

In Proposition~\ref{prB.5} below we will obtain another expression for $\xi$. In order to do this we will need the following result.

\begin{lemma}\label{leB.4} Assume that $w>0$ and let $\bx\in X_{vw}$ be a $t$-invariant element. We have
$$
\ov{N}\xcirc \mathfrak{p}\xcirc \sigma'\xcirc d(\bx) = \frac{(w-1)(w+1)}{2w} d'(\bx).
$$
\end{lemma}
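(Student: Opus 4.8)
The plan is to reduce the whole identity to a single computation, namely that of $N\xcirc\sigma'$ on $X_{v,w-1}$, and then to assemble the answer from three facts already at hand: that $\ov{N}\xcirc\mathfrak{p}=\frac{1}{w}N$ as an operator on $X_{v,w-1}$, that $N\xcirc d=d'\xcirc N$ (one of the relations listed just after Theorem~\ref{th3.1}), and that $N(\bx)=(w+1)\bx$ whenever $\bx\in X_{vw}$ is $t$-invariant, because $N=\ide+t+\cdots+t^w$ on $X_{vw}$.

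First I would record that, on $X_{v,w-1}$, the map $\mathfrak{p}$ is $\frac{1}{w}$ times the canonical projection onto $\ov{X}_{v,w-1}$, while $\ov{N}$ is induced by $N=\ide+t+\cdots+t^{w-1}$; hence $\ov{N}\xcirc\mathfrak{p}=\frac{1}{w}N$ there, and the left-hand side of the claim equals $\frac{1}{w}\,N\xcirc\sigma'\xcirc d(\bx)$. Next I would compute $N\xcirc\sigma'$ on $X_{v,w-1}$: there $t$ has order $w$, so $N\xcirc t=N$ and therefore $N\xcirc t^{j}=N$ for all $j\ge 0$; since $\sigma'=\sum_{j=0}^{w-2}\frac{w-1-j}{w}\,t^{j}$ on $X_{v,w-1}$, this yields $N\xcirc\sigma'=\bigl(\sum_{j=0}^{w-2}\tfrac{w-1-j}{w}\bigr)N=\frac{w-1}{2}\,N$, using $\sum_{j=0}^{w-2}(w-1-j)=\frac{(w-1)w}{2}$. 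Finally I would substitute and invoke $N\xcirc d(\bx)=d'\xcirc N(\bx)=(w+1)\,d'(\bx)$ to obtain $\ov{N}\xcirc\mathfrak{p}\xcirc\sigma'\xcirc d(\bx)=\frac{w-1}{2w}\,N\xcirc d(\bx)=\frac{(w-1)(w+1)}{2w}\,d'(\bx)$, which is the assertion. (The case $w=1$ is trivially consistent, both sides being $0$ since $\sigma'=0$ on $X_{v,0}$.)

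I do not expect a genuine obstacle here: the argument is a short computation resting on the elementary relations collected earlier in the paper. The one point deserving care is the bookkeeping of which index — $w$ or $w-1$ — enters the formulas for $\sigma'$, $\mathfrak{p}$ and $N$, according to whether the operator in question acts on $X_{vw}$ or on $X_{v,w-1}$; keeping that straight is precisely what makes the coefficients come out as stated.
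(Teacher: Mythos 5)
Your argument is correct and follows essentially the same route as the paper's proof: both expand $\sigma'$ on $X_{v,w-1}$, use that $\mathfrak{p}$ is $\tfrac{1}{w}$ times the class map (equivalently, that $N\xcirc t^{j}=N$ there), evaluate $\sum_{j=0}^{w-2}(w-1-j)=\tfrac{w(w-1)}{2}$, and conclude with $N\xcirc d=d'\xcirc N$ together with $N(\bx)=(w+1)\bx$ for $t$-invariant $\bx$. The only difference is cosmetic — you collapse $\ov{N}\xcirc\mathfrak{p}$ to $\tfrac{1}{w}N$ first and then compute $N\xcirc\sigma'$, while the paper substitutes the formula for $\sigma'$ first — so the two proofs coincide.
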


\begin{proof} In fact
\begin{align*}
\ov{N}\xcirc \mathfrak{p}\xcirc \sigma'\xcirc d (\bx) &= \sum_{j=0}^{w-2} \frac{w-j-1}{w} \ov{N}\xcirc \mathfrak{p} \xcirc t^j\xcirc d (\bx)\\
& = \sum_{j=0}^{w-2} \frac{w-j-1}{w^2} \ov{N}([d(\bx)])\\
& = \frac{w-1}{2w} N\xcirc d(\bx)\\
& = \frac{(w-1)(w+1)}{2w} d'(\bx),
\end{align*}
where $[d(\bx)]$ denotes the class of $d(\bx)$ in $\ov{X}_{v,w-1}$ and the last equality follows from~\eqref{*eq18}.
\end{proof}

\begin{proposition}\label{prB.5} Assume that $w>1$ and let $\bx\in X_{vw}$ be a $t$-invariant element. We have
$$
\xi(\bx) = \frac{1}{w-1} N \xcirc d' \xcirc \sigma'\xcirc  d(\bx).
$$
\end{proposition}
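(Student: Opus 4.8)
The plan is to unfold the definition of $\xi$, substitute the explicit formula for $\Upsilon$, and collapse the outcome with the help of Lemma~\ref{leB.4} and the identities $d'\xcirc d'=0$ and $\ov{d}\xcirc\ov{d}=0$ (the former holds because $\mathcal{X}$ is a triple complex, Theorem~\ref{th3.1}; the latter because $(\ov{X},\ov{b},\ov{d})$ is a complex, Theorem~\ref{th3.4}).

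First I would simplify $\Upsilon$ on the two $t$-invariant arguments that occur. Since $\bx$ is $t$-invariant and $w>0$, Lemma~\ref{leB.4} gives $\ov{N}\xcirc\mathfrak{p}\xcirc\sigma'\xcirc d(\bx)=\frac{(w-1)(w+1)}{2w}d'(\bx)$, hence
\[
\Upsilon(\bx)=-\sigma'\xcirc d(\bx)+\tfrac{(w-1)(w+1)}{2w}\,d'(\bx),\qquad\text{so}\qquad d'\xcirc\Upsilon(\bx)=-d'\xcirc\sigma'\xcirc d(\bx)
\]
by $d'\xcirc d'=0$. By equation~\eqref{*eq18}, $d'(\bx)=\frac{1}{w+1}N\xcirc d(\bx)$ lies in $\ima(N)=\ker(1-t)$ (equation~\eqref{eq1}) and is therefore $t$-invariant; since $w-1>0$, I can run the same computation one column to the left, applying Lemma~\ref{leB.4} at level $w-1$ to $d'(\bx)$. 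This time the Lemma~\ref{leB.4} term is a scalar multiple of $d'\xcirc d'(\bx)=0$, so $\Upsilon\xcirc d'(\bx)=-\sigma'\xcirc d\xcirc d'(\bx)$. Plugging both formulas into $\xi(\bx)=-\frac{w+1}{w}\Upsilon\xcirc d'(\bx)-d'\xcirc\Upsilon(\bx)$ yields
\[
\xi(\bx)=\tfrac{w+1}{w}\,\sigma'\xcirc d\xcirc d'(\bx)+d'\xcirc\sigma'\xcirc d(\bx).
\]

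It then remains to identify this with $\frac{1}{w-1}N\xcirc d'\xcirc\sigma'\xcirc d(\bx)$, which I would do by applying $\mathfrak{p}$ and inverting it on $t$-invariant elements. The first summand is killed by $\mathfrak{p}$: because $\sigma'$ is a polynomial in $t$ and $\mathfrak{p}\xcirc t=\mathfrak{p}$, the element $\mathfrak{p}\xcirc\sigma'\xcirc d\xcirc d'(\bx)$ is a scalar multiple of $\mathfrak{p}\xcirc d\xcirc d'(\bx)$, and the latter equals, up to a nonzero scalar, $\ov{d}\xcirc\ov{d}$ evaluated on the class of $\bx$ --- here one uses that $d$ descends to $\ov{d}$, and that by~\eqref{*eq18} and $\mathfrak{p}\xcirc t=\mathfrak{p}$ the class of $d'(\bx)$ is proportional to the class of $d(\bx)$. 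Hence $\mathfrak{p}(\xi(\bx))=\mathfrak{p}\bigl(d'\xcirc\sigma'\xcirc d(\bx)\bigr)$. Finally, Proposition~\ref{prB.3} tells us $\xi(\bx)\in X_{v,w-2}$ is $t$-invariant; since $\ov{N}\xcirc\mathfrak{p}=\frac{1}{w-1}N$ on $X_{v,w-2}$ (by~\eqref{e4} and the definition of $\mathfrak{p}$) while $N$ acts as $(w-1)\ide$ on $t$-invariant elements there, we conclude $\xi(\bx)=\ov{N}\xcirc\mathfrak{p}(\xi(\bx))=\ov{N}\xcirc\mathfrak{p}\bigl(d'\xcirc\sigma'\xcirc d(\bx)\bigr)=\frac{1}{w-1}N\xcirc d'\xcirc\sigma'\xcirc d(\bx)$.

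The step I expect to be the main nuisance is the index bookkeeping rather than any real difficulty: $\sigma'$, $N$, $\mathfrak{p}$ and $\ov{N}$ all depend on the column in which they act, so one must track the level of every symbol (for instance the $\sigma'$ inside $\sigma'\xcirc d\xcirc d'$ acts on $X_{v,w-2}$, not on $X_{v,w-1}$) and the accompanying coefficients with care. This is also why $w>1$ is required: applying $\Upsilon$ and Lemma~\ref{leB.4} at level $w-1$, dividing by $w-1$, and recovering a $t$-invariant element from its $\mathfrak{p}$-class all break down when $w-1=0$.
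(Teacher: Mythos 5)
Your proposal is correct and follows essentially the same route as the paper. Both proofs first use Lemma~\ref{leB.4} (at level $w$ and, since $d'(\bx)$ is $t$-invariant by \eqref{*eq18}, again at level $w-1$) together with $d'\xcirc d'=0$ to reach the intermediate identity $\xi(\bx)=\frac{w+1}{w}\sigma'\xcirc d\xcirc d'(\bx)+d'\xcirc\sigma'\xcirc d(\bx)$, and then both exploit the $t$-invariance of $\xi(\bx)$ from Proposition~\ref{prB.3} to write $\xi(\bx)=\ov{N}\xcirc\mathfrak{p}(\xi(\bx))=\frac{1}{w-1}N\xcirc\xi(\bx)$ and observe that the first summand dies. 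The only cosmetic difference is how the vanishing of $\ov{N}\xcirc\mathfrak{p}\xcirc\sigma'\xcirc d\xcirc d'(\bx)$ is argued: the paper cites Lemma~\ref{leB.4} once more, while you push $\mathfrak{p}$ through $\sigma'$ and $d$ to land on $\ov{d}\xcirc\ov{d}=0$ --- the same fact, reached by a short alternate computation.
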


\begin{proof} In fact, by Lemma~\ref{leB.4}
$$
d' \xcirc \ov{N}\xcirc \mathfrak{p}\xcirc \sigma'\xcirc d(\bx) = \ov{N}\xcirc \mathfrak{p}\xcirc \sigma'\xcirc d\xcirc d'(\bx)  = 0.
$$
Hence
$$
\xi(\bx) = - \frac{w+1}{w} \Upsilon\xcirc d'(\bx) - d'\xcirc \Upsilon(\bx) =  \frac{w+1}{w} \sigma'\xcirc d \xcirc d'(\bx) + d' \xcirc \sigma'\xcirc  d(\bx).
$$
Combining this with the fact that $\xi(\bx)$ is $t$-invariant, we obtain
\begin{align*}
\xi(\bx) & = \frac{1}{w-1} N\xcirc \xi(\bx)\\
& = \ov{N}\xcirc \mathfrak{p}\xcirc \xi(\bx)\\
& = \frac{w+1}{w} \ov{N}\xcirc \mathfrak{p}\xcirc  \sigma'\xcirc d \xcirc d'(\bx) + \ov{N}\xcirc \mathfrak{p}\xcirc d' \xcirc \sigma'\xcirc d(\bx)\\
& = \ov{N}\xcirc \mathfrak{p}\xcirc d' \xcirc \sigma'\xcirc d(\bx)\\
& = \frac{1}{w-1} N \xcirc d' \xcirc \sigma'\xcirc d(\bx),
\end{align*}
where the forth equality follows from Lemma~\ref{leB.4}.
\end{proof}

\noindent{\bf Proof of Theorem~\ref{th4.7}.}\enspace We claim that $\xi\xcirc \ov{N} = (w+1)\ov{N}\xcirc \ov{\xi}$ on $\ov{X}_{vw}$. This is obvious if $w\le 1$, because both terms in this equality are zero. Assume that $w>1$. Then, by Proposition~\ref{prB.5},
$$
\xi\xcirc \ov{N} =  \frac{1}{w-1}N \xcirc d' \xcirc \sigma'\xcirc  d\xcirc \ov{N} = \ov{N} \xcirc \mathfrak{p}\xcirc d' \xcirc \sigma'\xcirc  d\xcirc \ov{N} = (w+1)\ov{N}\xcirc \ov{\xi}.
$$
Combining this with Proposition~\ref{prB.3} and the second equality in~\eqref{e4}, we obtain that
\begin{align*}
\ddot{d} \xcirc \Psi_{vw}(\bx,\byy)  & = \ddot{d}\left(\frac{1}{w+1} \left(\ov{N}(\bx),\Upsilon\xcirc \ov{N}(\bx)\right) +(0,\ov{N}(\byy)) \right)\\
& =\frac{1}{w+1} {}^e d \left(\ov{N}(\bx),\Upsilon\xcirc \ov{N}(\bx)\right) + \frac{1}{w+1} {}^e \xi \left(\ov{N}(\bx),\Upsilon\xcirc \ov{N}(\bx)\right) + (0,-d'\xcirc \ov{N}(\byy))\\
& = \frac{1}{w} \left(d'\xcirc \ov{N}(\bx),\Upsilon\xcirc d'\xcirc \ov{N}(\bx)\right) + \frac{1}{w+1} \left(0,\xi\xcirc \ov{N}(\bx)\right)-(0,d'\xcirc \ov{N}(\byy))\\
& = \frac{1}{w}\left(\ov{N}\xcirc\ov{d}(\bx),\Upsilon\xcirc\ov{N}\xcirc \ov{d}(\bx)\right)+ (0,\ov{N}\xcirc\ov{\xi}(\bx)) -(0,\ov{N}\xcirc \ov{d}(\byy))\\
& = \Psi_{v,w-1} \left((\ov{d}(\bx),-\ov{d}(\byy)) + (0,\ov{\xi}(\bx))\right)\\
& = \Psi_{v,w-1}\xcirc (\wt{\mathfrak{d}} +\wt{\xi})(\bx,\byy),
\end{align*}
and using Proposition~\ref{prB.2} and the first equality in~\eqref{e4}, that
\begin{align*}
\ddot{b} \xcirc \Psi_{vw}(\bx,\byy) & = \ddot{b}\left(\frac{1}{w+1} \left(\ov{N}(\bx),\Upsilon\xcirc \ov{N}(\bx)\right) + (0,\ov{N}(\byy)) \right)\\
& = \frac{1}{w+1}(b\xcirc \ov{N}(\bx),\Upsilon\xcirc b\xcirc \ov{N}(\bx)) - (0,b\xcirc \ov{N}(\byy))\\
& = \frac{1}{w+1}(\ov{N}\xcirc \ov{b}(\bx),\Upsilon\xcirc \ov{N}\xcirc \ov{b}(\bx)) - (0,\ov{N}\xcirc \ov{b}(\byy))\\
& = \Psi_{v-1,w}(\ov{b}(\bx),-\ov{b}(\byy))\\
& = \Psi_{v-1,w}\xcirc \wt{\mathfrak{b}}(\bx,\byy).
\end{align*}
These facts show that $\Psi$ is a morphism of double complexes. Since
\begin{align*}
\ddot{B} \xcirc \Psi_{vw}(\bx,\byy) & = \ddot{B}\left( \frac{1}{w+1} \left(\ov{N}(\bx),\Upsilon\xcirc \ov{N}(\bx)\right) + (0,\ov{N}(\byy)) \right)\\
& = (0,\ov{N}(\bx))= \Psi_{v,w+1} (0,\bx) = \Psi_{v,w+1} \xcirc\wt{B}(\bx,\byy),
\end{align*}
$\Psi$ is a morphism of mixed complexes. To finish the proof it suffices to note that $\Psi$ is bijective.\qed

\medskip

\noindent{\bf Proof of Proposition~\ref{pr4.9}.}\enspace By equality~\eqref{e1} and the definition of $\Psi$ (see Theorem~\ref{th4.7}), we have
$$
P(0,\byy) = \frac{1}{w} (0,N(y)) = \frac{1}{w} \Psi(0,[\byy]),
$$
which proves equality~\eqref{et1}. The case $w=0$ of equality~\eqref{et2} follows from the fact that, by equality~\eqref{e2},
$$
P(\bx,0) = (\bx,0) = \Psi([\bx],0).
$$
Assume now that $w>0$. Then, by Corollary~\ref{cor4.4},
$$
P(\bx,0) = \frac{1}{w+1}(N(\bx),0) + \sum_{j=0}^{w-1}\sum_{i=0}^w \frac{2j+2i-2w+1}{2w(w+1)} \bigl(0,t^j\xcirc \varrho_n\xcirc t^i(\bx)\bigr)
$$
and by the definitions of $\Upsilon$, $\sigma'$ and Lemma~\ref{leB.4},
\begin{align*}
\Upsilon\xcirc \ov{N}([\bx]) &= \frac{(w-1)(w+1)}{2w} d'\xcirc\ov{N}([\bx])- \sigma'\xcirc d\xcirc \ov{N}([\bx])\\
&= \frac{(w-1)(w+1)}{2w} d'\xcirc\ov{N}([\bx])- \sigma'\xcirc d'\xcirc \ov{N}([\bx]) - \sigma'\xcirc \varrho_n\xcirc \ov{N}([\bx])\\
& = \frac{(w-1)(w+1)}{2w} \ov{N}\xcirc \ov{d}([\bx])- \sigma'\xcirc\ov{N}\xcirc \ov{d}([\bx])) - \sigma'\xcirc \varrho_n\xcirc \ov{N}([\bx])\\
& = \frac{w-1}{2w} \ov{N}\xcirc \ov{d}([\bx]) - \sum_{j=0}^{w-1}\frac{w-1-j}{w} t^j\xcirc \varrho_n\xcirc \ov{N}([\bx])\\
& = \frac{w-1}{2w} N\xcirc d(\bx) - \sum_{j=0}^{w-1}\sum_{i=0}^w\frac{w-1-j}{w} t^j\xcirc \varrho_n\xcirc t^i(\bx).
\end{align*}
Hence,
\begin{align*}
P(\bx,0)-\Psi([\bx],0) & = P(\bx,0)-\frac{1}{w+1}\left(\ov{N}([\bx]),\Upsilon\xcirc \ov{N}([\bx])\right)\\
&= \sum_{j=0}^{w-1}\sum_{i=0}^w \frac{2i-1}{2w(w+1)} \bigl(0,t^j\xcirc \varrho_n\xcirc t^i(\bx)\bigr) - \frac{w-1}{2w(w+1)} \bigl(0, N\xcirc d(\bx)\bigr)\\
&= \sum_{i=0}^w \frac{2i-1}{2w(w+1)} \bigl(0,N\xcirc \varrho_n\xcirc t^i(\bx)\bigr) - \sum_{i=0}^w \frac{w-1}{2w(w+1)} \bigl(0, N\xcirc \varrho_n\xcirc t^i (\bx)\bigr)\\
&= \sum_{i=0}^w \frac{2i-w}{2w(w+1)} \bigl(0,N\xcirc \varrho_n\xcirc t^i(\bx)\bigr)\\
& = \sum_{i=0}^w \frac{2i-w}{2w(w+1)} \Psi\bigl(0,[\varrho_n\xcirc t^i(\bx)]\bigr),
\end{align*}
as desired.\qed

\begin{bibdiv}
\begin{biblist}

\bib{Abe}{book}{
   author={Abe, Eiichi},
   title={Hopf algebras},
   series={Cambridge Tracts in Mathematics},
   volume={74},
   note={Translated from the Japanese by Hisae Kinoshita and Hiroko Tanaka},
   publisher={Cambridge University Press, Cambridge-New York},
   date={1980},
   pages={xii+284},
   isbn={0-521-22240-0},
   review={\MR{594432 (83a:16010)}},
}

\bib{Assem}{article}{
   author={Assem, Ibrahim},
   author={Marmaridis, Nikolaos},
   title={Tilting modules over split-by-nilpotent extensions},
   journal={Comm. Algebra},
   volume={26},
   date={1998},
   number={5},
   pages={1547--1555},
   issn={0092-7872},
   review={\MR{1622428 (99f:16014)}},
   doi={10.1080/00927879808826219},
}

\bib{B}{article}{
   author={Burghelea, Dan},
   title={Cyclic homology and the algebraic $K$-theory of spaces. I},
   conference={
      title={Applications of algebraic $K$-theory to algebraic geometry and
      number theory, Part I, II},
      address={Boulder, Colo.},
      date={1983},
   },
   book={
      series={Contemp. Math.},
      volume={55},
      publisher={Amer. Math. Soc., Providence, RI},
   },
   date={1986},
   pages={89--115},
   review={\MR{862632 (88i:18009a)}},
   doi={10.1090/conm/055.1/862632},
}

\bib{Co}{article}{
   author={Corti{\~n}as, Guillermo},
   title={On the cyclic homology of commutative algebras over arbitrary
   ground rings},
   journal={Comm. Algebra},
   volume={27},
   date={1999},
   number={3},
   pages={1403--1412},
   issn={0092-7872},
   review={\MR{1669111 (2000c:16010)}},
   doi={10.1080/00927879908826502},
}

\bib{C}{article}{
   author={Crainic, Marius},
   title={On the perturbation lemma, and deformations},
   journal={arXiv:Math. AT/0403266},
   date={2004},
}

\bib{C-Q1}{article}{
   author={Cuntz, Joachim},
   author={Quillen, Daniel},
   title={Algebra extensions and nonsingularity},
   journal={J. Amer. Math. Soc.},
   volume={8},
   date={1995},
   number={2},
   pages={251--289},
   issn={0894-0347},
   review={\MR{1303029 (96c:19002)}},
   doi={10.2307/2152819},
}

\bib{C-Q2}{article}{
   author={Cuntz, Joachim},
   author={Quillen, Daniel},
   title={Operators on noncommutative differential forms and cyclic
   homology},
   conference={
      title={Geometry, topology, \& physics},
   },
   book={
      series={Conf. Proc. Lecture Notes Geom. Topology, IV},
      publisher={Int. Press, Cambridge, MA},
   },
   date={1995},
   pages={77--111},
   review={\MR{1358613 (96j:19004)}},
}

\bib{Eilenberg}{article}{
   author={Eilenberg, Samuel},
   title={Algebras of cohomologically finite dimension},
   journal={Comment. Math. Helv.},
   volume={28},
   date={1954},
   pages={310--319},
   issn={0010-2571},
   review={\MR{0065544 (16,442c)}},
}

\bib{G-S}{article}{
   author={Gerstenhaber, Murray},
   author={Schack, Samuel D.},
   title={Relative Hochschild cohomology, rigid algebras, and the Bockstein},
   journal={J. Pure Appl. Algebra},
   volume={43},
   date={1986},
   number={1},
   pages={53--74},
   issn={0022-4049},
   review={\MR{862872 (88a:16045)}},
   doi={10.1016/0022-4049(86)90004-6},
}

\bib{G1}{article}{
   author={Goodwillie, Thomas G.},
   title={Relative algebraic $K$-theory and cyclic homology},
   journal={Ann. of Math. (2)},
   volume={124},
   date={1986},
   number={2},
   pages={347--402},
   issn={0003-486X},
   review={\MR{855300 (88b:18008)}},
   doi={10.2307/1971283},
}

\bib{G2}{article}{
   author={Goodwillie, Thomas G.},
   title={Cyclic homology, derivations, and the free loopspace},
   journal={Topology},
   volume={24},
   date={1985},
   number={2},
   pages={187--215},
   issn={0040-9383},
   review={\MR{793184 (87c:18009)}},
   doi={10.1016/0040-9383(85)90055-2},
}

\bib{G-G}{article}{
   author={Guccione, Jorge A.},
   author={Guccione, Juan J.},
   title={Relative cyclic homology of square zero extensions},
   journal={J. Reine Angew. Math.},
   volume={600},
   date={2006},
   pages={51--80},
   issn={0075-4102},
   review={\MR{2283798 (2007h:19006)}},
   doi={10.1515/CRELLE.2006.086},
}

\bib{K1}{article}{
   author={Kadison, Lars},
   title={Cyclic homology of extension algebras with application to matrix algebras, algebraic $K$-theory, and Nest algebras of operators},
   language={English},
   journal={Ph. D. thesis, U. of Cal. (Berkeley)},
   date={1984},
}

\bib{K2}{article}{
   author={Kadison, Lars},
   title={A relative cyclic cohomology theory useful for computations},
   language={English, with French summary},
   journal={C. R. Acad. Sci. Paris S\'er. I Math.},
   volume={308},
   date={1989},
   number={20},
   pages={569--573},
   issn={0249-6291},
   review={\MR{1001809 (90k:18014)}},
}

\bib{Ka1}{article}{
   author={Kassel, Christian},
   title={Cyclic homology, comodules, and mixed complexes},
   journal={J. Algebra},
   volume={107},
   date={1987},
   number={1},
   pages={195--216},
   issn={0021-8693},
   review={\MR{883882 (88k:18019)}},
   doi={10.1016/0021-8693(87)90086-X},
}

\bib{Kha}{article}{
   author={Mohamed, Soud Khalifa},
   title={Relative theory in subcategories},
   journal={Colloq. Math.},
   volume={117},
   date={2009},
   number={1},
   pages={29--63},
   issn={0010-1354},
   review={\MR{2539547 (2010m:16019)}},
   doi={10.4064/cm117-1-3},
}

\bib{LMZ}{article}{
   author={Lanzilotta M., Marcelo},
   author={Mendoza, Octavio},
   author={S{\'a}enz, Corina},
   title={Split-by-nilpotent extensions algebras and stratifying systems},
   journal={Comm. Algebra},
   volume={42},
   date={2014},
   number={11},
   pages={4997--5017},
   issn={0092-7872},
   review={\MR{3210426}},
   doi={10.1080/00927872.2013.830729},
}

\bib{MacLane}{book}{
   author={MacLane, Saunders},
   title={Homology},
   edition={1},
   note={Die Grundlehren der mathematischen Wissenschaften, Band 114},
   publisher={Springer-Verlag, Berlin-New York},
   date={1967},
   pages={x+422},
   review={\MR{0349792 (50 \#2285)}},
}

\bib{Pierce}{book}{
   author={Pierce, Richard S.},
   title={Associative algebras},
   series={Graduate Texts in Mathematics},
   volume={88},
   note={Studies in the History of Modern Science, 9},
   publisher={Springer-Verlag, New York-Berlin},
   date={1982},
   pages={xii+436},
   isbn={0-387-90693-2},
   review={\MR{674652 (84c:16001)}},
}

\bib{Q}{article}{
   author={Quillen, Daniel},
   title={Cyclic cohomology and algebra extensions},
   journal={$K$-Theory},
   volume={3},
   date={1989},
   number={3},
   pages={205--246},
   issn={0920-3036},
   review={\MR{1040400 (91g:19003)}},
   doi={10.1007/BF00533370},
}

\bib{Rotman}{book}{
   author={Rotman, Joseph J.},
   title={An introduction to homological algebra},
   series={Pure and Applied Mathematics},
   volume={85},
   publisher={Academic Press, Inc. [Harcourt Brace Jovanovich, Publishers],
   New York-London},
   date={1979},
   pages={xi+376},
   isbn={0-12-599250-5},
   review={\MR{538169 (80k:18001)}},
}
\end{biblist}
\end{bibdiv}

\end{document}